\newcommand{\cA}{{\mathcal A}}
\newcommand{\cB}{{\mathcal B}}
\newcommand{\cF}{{\mathcal F}}
\newcommand{\cG}{{\mathcal G}}
\newcommand{\cH}{{\mathcal H}}
\newcommand{\cP}{{\mathcal P}}
\newcommand{\wq}{\tilde{\tau}}
\newcommand{\np}{s}
\newcommand{\cM}{{\mathcal M}}
\newcommand{\cU}{{\mathcal U}}
\newcommand{\N}{{\mathbbm N}}
\newcommand{\Z}{{\mathbbm Z}}
\newcommand{\1}{{\mathbbm 1}}
\newcommand{\id}{\operatorname{id}}
\newcommand{\inn}{\operatorname{int}}
\newcommand{\lcm}{\operatorname{lcm}}
\newcommand{\card}[1]{|#1|}
\newcommand{\ddelta}{\boldsymbol{\delta}}
\newcommand{\Spec}{\operatorname{Spec}}
\renewcommand{\mod}{\text{ mod }}
\newcommand{\sat}{{sat}}
\newcommand{\Sh}{(Sh)}
\newcommand{\wcH}{{\widetilde\cH}}
\newtheorem {lemma}{Lemma}[section]
\newtheorem{theorem}[lemma]{Theorem}
\newtheorem{proposition}[lemma]{Proposition}
\newtheorem {corollary}[lemma]{Corollary}
\newtheorem {definition}[lemma]{Definition}
\newtheorem {bemerkung}[lemma]{Remark}
\newtheorem{beispiel}[lemma]{Example}
\newenvironment{remark} {\begin{bemerkung} \normalfont }{\end{bemerkung}}
\newenvironment{example} {\begin{beispiel} \normalfont }{\end{beispiel}}
\newcommand{\gen}[1]{\langle #1\rangle}
\newcommand{\Aut}{\operatorname{Aut}}
\newcommand{\Tor}{\operatorname{Tor}}
\newcommand{\subjclass}[2][2020]{%
  \let\@oldtitle\@title%
  \gdef\@title{\@oldtitle\footnotetext{#1 \emph{MSC classification:} #2}}%
}
\newcommand{\keywords}[1]{%
  \let\@@oldtitle\@title%
  \gdef\@title{\@@oldtitle\footnotetext{\emph{Keywords:} #1.}}%
}
\begin{document}
\title{Automorphisms of $\cB$-free and other Toeplitz shifts}
\author{Aurelia Dymek$^1$ \quad Stanis\l{}aw Kasjan$^1$ \quad Gerhard Keller$^2$}
\affil{\small
$^1$\ Faculty of Mathematics and Computer Science, Nicolaus Copernicus University, Toru\'n, Poland\\
$^2$\ Department of Mathematics, Friedrich-Alexander University, Erlangen, Germany
}
\date{Version of \today}
\subjclass{37A44, 37B05, 37B10}
\keywords{$\cB$-free dynamics, Toeplitz dynamical system, sets of multiples, automorphisms group, trivial centralizer}

\maketitle
\begin{abstract}
We present sufficient conditions for the triviality of the automorphism group of regular Toeplitz subshifts and give a broad class of examples from the class of $\cB$-free subshifts satisfying them, extending \cite{Bartnicka2017}.
On the other hand we provide an example of a $\cB$-free Toeplitz subshift whose
automorphism group has elements of arbitrarily large finite order, answering
Question 11 in \cite{MR3821717}.
\end{abstract}
\section{Introduction}

\paragraph{Toeplitz subshifts}
Let $\eta\in\{0,1\}^\Z$ be a non-periodic Toeplitz sequence with period structure
$p_1\mid p_2\mid p_3 \dots\to\infty$. That means for each $k\in\Z$ there exists $n\ge1$ such that $\eta_{|k+p_n\Z}$ is constant, but $\eta$ is not periodic (the latter excludes trivial cases). Denote the orbit closure of $\eta$ under the left shift $\sigma$ on $\{0,1\}^\Z$ by $X_\eta$. Each such subshift is called a \emph{Toeplitz shift}.
It follows from the work of Williams \cite{MR756807} that such systems are in fact almost 1-1 extensions of their \emph{maximal equicontinuous factor (MEF)}, in this case of an associated odometer $(G,T)$, where $G$ is the compact topological group $\varprojlim \Z/p_n\Z$ built from the period structure
$(p_n)_{n\ge 1}$, and
$T$ is the translation by $(1, 1,\ldots)$.
 Recall that odometers are minimal, equicontinuous and zero-dimensional dynamical systems, and the conjunction of these three properties characterizes them among all topological dynamical systems, see \cite{Downarowicz2005}.

\paragraph{The centralizer}
For any subshift $(Y,\sigma)$, i.e., a shift invariant closed subset $Y\subseteq\{0,1\}^\Z$, the \emph{automorphism group} (or \emph{centralizer}) is the group of all homeomorphisms $U\colon Y\to Y$ which commute with $\sigma$. Its elements are sliding block codes \cite{MR0259881}. Therefore the automorphism group is countable. Since all powers of the shift are elements of the centralizer, it contains a copy of $\Z$ as a normal subgroup. We say that the automorphism group is \emph{trivial} if it consists solely of powers of the shift.

Centralizers are studied for various classes of systems.
Bu\l{}atek and Kwiatkowski \cite{Bulatek1990} do it for Toeplitz subshifts with separated holes {\Sh} using elements of the associated odometer $(G,T)$. Moreover, in \cite{MR1199322} they deliver examples of Toeplitz subshifts with positive topological entropy and trivial automorphism group. More recently, Cyr and Kra study automorphism groups of subshifts of subquadratic and linear growth in \cite{MR3430839, MR3324942}. In \cite{MR3430839}, they prove that the cosets of powers of the shift in the centralizer of any topologically transitive subshift of subquadratic growth form a periodic group. In \cite{MR3324942}, they show that any minimal subshift with non-superlinear complexity has a virtually $\Z$ automorphism group, answering the question asked in \cite{MR3394119}. In \cite{Donoso2015} the same result is shown independently with different methods by Donoso, Durand, Maass and Petite. On the other hand, the centralizer can be quite a complicated group. In \cite{MR3570202} Salo gives an example of a Toeplitz subshift with not finitely generated automorphism group. We provide another example with this property, see~\eqref{eq:ex-B_1^N} and Corollary~\ref{coro:large-centralizer}.

\paragraph{This paper's contributions to general Toeplitz shifts}
Let $\eta$ be a Toeplitz sequence with period structure $(p_n)_{n\ge 1}$ and recall from
\cite{Bulatek1990} that a position $k\in\Z$ is called a \emph{hole at level $N$} if $\eta_{|k+p_N\Z}$ is not constant. We refine this concept and call such a hole
\emph{essential}, if the residue class $k+p_N\Z$ contains holes of each level $n\ge N$, see Definition~\ref{def:essential-holes}. The minimal period $\wq_N$ of the set of essential holes at level $N$ divides $p_N$ and, under a (seemingly strong) additional assumption, Theorem~\ref{theo:basic-case} provides restrictions on the size of the centralizer in terms of the quotients $p_N/\wq_N$. A direct application of this result to a variant of the Garcia - Hedlund sequence is discussed in Example~\ref{ex:GarciaHedlung}.
After that, using a mixture of topological and arithmetic arguments, we show that the additional assumption is satisfied more often than one may expect -- the main tool is Theorem~\ref{theo:pre-B-free}. Along this way we exploit suitable topological variants of the separated holes condition \eqref{Sh} from \cite{Bulatek1990} in Proposition~\ref{prop:local-invariance-new}, and verify these conditions under arithmetic assumptions tailor-made for the $\cB$-free case in Proposition~\ref{prop:local-invariance-new-2}. We note that our techniques generalize the setting from \cite{Bulatek1990}, but are kind of transverse to the setting from \cite{MR1199322}, see Remark~\ref{remark:condition(*)}.

\paragraph{The centralizer of $\cB$-free Toeplitz shifts}
For any set $\cB\subseteq\{2,3,\ldots\}$
let
\begin{equation*}
\cM_\cB:=\bigcup_{b\in\cB}b\Z
\end{equation*}
be the set of \emph{multiples of $\cB$} and
\begin{equation*}
\cF_\cB:=\Z\setminus\cM_\cB
\end{equation*}
the set of \emph{$\cB$-free numbers}. One can easily modify a set $\cB$ to have the same set of multiples and to be \emph{primitive}, i.e., $b\nmid b'$ for different $b,b'\in\cB$. So, we will tacitly assume that $\cB$ is primitive.
The investigation of sets of multiples and $\cB$-free numbers has a quite long history, see \cite{Hall1996}. Recently, the subshifts associated with $\cB$-free numbers are under intensive study, see e.g. \cite{BKKL2015,KKL2016} and references therein. Namely, let $\eta\in\{0,1\}^\Z$ be the characteristic function of the $\cB$-free numbers.
Theorem B in \cite{KKL2016} shows that minimality of $(X_\eta,\sigma)$ is equivalent to $\eta$ being Toeplitz, whenever $\cB$ is taut. The tautness assumption was removed in \cite[Thm.~3.7]{DKKP2022}. So, in the $\cB$-free context, $(X_\eta,\sigma)$ is minimal if and only if $\eta$ is a Toeplitz sequence.

In the case of $\cB$-free Toeplitz subshifts the previously cited results for low complexity systems are not useful, because even very simple $\cB$-free Toeplitz systems may have superpolynomial complexity, see Section~\ref{sec:complexity} below, where this is shown for $\cB=\cB_1:=\{2^nc_n:n\in\N\}$ with pairwise coprime odd $c_n$. Although the separated holes condition {\Sh} is satisfied for this simple example, we were not able to use the results from \cite{Bulatek1990} to determine the centralizer. However, its triviality was shown with more direct methods in \cite{Bartnicka2017}.
On the other hand, there are many $\cB$-free Toeplitz systems which do not satisfy {\Sh} anyway, see the example discussed at the end of this introduction, so that there is need for techniques relying neither on low complexity nor on {\Sh}.

We mention briefly that
Mentzen \cite{Mentzen2017} proves the triviality of the automorphism group for any Erd\H{o}s $\cB$-free subshift, i.e., when $\cB$ is infinite, pairwise coprime and $\sum_{b\in\cB}\frac{1}b<\infty$. This is extended to taut $\cB$ containing an infinite pairwise coprime subset in \cite{Keller2016, KLRS2021}. This class of $\cB$-free subshifts is kind of opposite to the $\cB$-free Toeplitz shifts.

\paragraph{This paper's contributions to $\cB$-free Toeplitz shifts}
In Section~\ref{sec:Bfree}, the result from Section~\ref{sec:abstract_Toeplitz} are applied to $\cB$-free examples.
This is possible, because Theorem~\ref{theo:ess-holes-arithmetic} provides an arithmetic characterization of the sets of essential holes in terms of sets of multiples derived explicitly from the set $\cB$.
Then we can use the general results to
produce examples of minimal $\cB$-free systems, including not only the case $\cB=\cB_1$ treated in \cite{Bartnicka2017} but also many systems violating the separated holes condition {\Sh}, which have trivial centralizers,
see Subsection~\ref{subsec:trivial-centralizer}. (The reader will notice that a very broad class of examples can be treated along the same lines.)
The fact that the general results fail to guarantee triviality of the centralizer for some examples, to which even the basic Theorem~\ref{theo:basic-case} applies, is not a shortcoming of our approach. This is illustrated in subsection~\ref{subsec:non-trivial}, where we consider
simple variants of $\cB=\cB_1$, still satisfying condition {\Sh}, but having non-trivial centralizers -- just as big as Theorem~\ref{theo:basic-case} allows them to be.
This provides a negative answer to Question 11 in \cite{MR3821717}. Indeed, a slight generalization of this construction provides examples for which the centralizer contains elements of arbitrarily large finite order, see
Remark~\ref{rem:finite-order}. It should be noticed that our examples have superpolynomial complexity, see Proposition \ref{proposition_complexity}, so that the complexity based results from the literature discussed above do not apply.

\paragraph{The formal setting}
We recall some notation and results from \cite{Baake-Jaeger-Lenz2015}, where a cut-and-project scheme is associated with a Toeplitz sequence $\eta\in\{0,1\}^\Z$.
\begin{compactitem}[  -]
\item $p_1\mid p_2\mid p_3 \dots\to\infty$ is a period structure of $\eta$.
\item $\cP_n^i:=\{k\in\Z: \eta_{|k+p_n\Z}=i\}$ denotes $p_n$-periodic positions of $i=0,1$ on $\eta$ and $\cH_n:=\Z\setminus(\cP_n^0\cup\cP_n^1)$ denotes the set of \emph{holes at level $n$}.
\item $G:=\varprojlim\Z/p_n\Z$ and $\Delta\colon \Z \rightarrow G$, where $\Delta(n) = (n+ p_1\Z,n+ p_2\Z,\ldots)$ denotes the diagonal embedding.
\item $T\colon G \rightarrow G$ denotes the rotation by $\Delta(1)$, i.e. $(Tg)_n = g_n + 1+p_n\Z$ for all $n\in\N$.
\item The topology on $G$ is generated by the (open and closed) cylinder sets
$$U_n(h) := \{g\in  G \ : \   g_n = h_n\},\; h\in G.$$
\item $V^i:=\bigcup_{n\in\N}\bigcup_{k\in\cP_n^i\cap[0,p_n)}U_n(\Delta(k))$\; for $i=0,1$.
\item The \emph{window} $W:=\overline{V^1}=G\setminus V^0$ is topologically regular, i.e.~ $\overline{\inn(W)}=W$.
\item $\phi\colon G\rightarrow \{0,1\}^\Z$
is the coding function: $(\phi(g))_n = \mathbf{1}_W(g+\Delta(n))$.
Observe that $\phi(\Delta(0))=\eta$.
\end{compactitem}

\paragraph{More background material and an outline of the paper}

The odometer $(G,T)$ is the MEF of $(X_{\eta},\sigma)$, see \cite{MR756807}.
Let $F\colon X_\eta \to X_\eta$ be an automorphism commuting with $\sigma$, and
denote by $\pi:(X_\eta,\sigma)\to(G,T)$ that factor map onto the MEF which is uniquely determined by $\pi(\eta)=\Delta(0)$. Then there is $y_F\in G$ such that the rotation $f:y\mapsto y+y_F$ on $G$ represents $F$ in the sense that $\pi\circ F=f\circ\pi$ (see e.g.~\cite[Lemma~2.4]{Donoso2015}).
Observe that $y_F=\pi(F(\eta))$.

Denote by $C_\phi$ the set of continuity points of $\phi:G\to\{0,1\}^\Z$. Then $\card{\pi^{-1}\{\pi(x)\}}=1$
if and only if $\pi(x)\in C_\phi$, and in this case $x=\phi(\pi(x))$. This is folklore knowledge, but the reader may consult \cite[Remark~4.2]{KR2018} and also \cite[Sec.~5--7]{Downarowicz2005} for a related and more general perspective on this point.

Since $F$ is a bijection respecting the fibre structure $X_\eta=\bigcup_{h\in G}\pi^{-1}\{h\}$, we have $\card{\pi^{-1}\{\pi(F(x))\}}=\card{\pi^{-1}\{\pi(x)\}}$ for all $x\in X_\eta$.
In particular $f(C_\phi)=C_\phi$, i.e.
$C_\phi+y_F=C_\phi$,
see also \cite[Lemma~4.2]{Downarowicz2005}. As the set of discontinuities of the indicator function $\mathbf{1}_W$ is precisely the boundary $\partial W$, a moment's reflection shows that $X_\eta\setminus C_\phi=\bigcup_{k\in\Z}\partial W+\Delta(k)$, so
\begin{equation}\label{eq:partialW-invariance-1}
\partial W+y_F\subseteq \bigcup_{k\in\Z}\partial W+\Delta(k).
\end{equation}
We will use only this property of $y_F$ to investigate the nature of possible automorphisms~$F$. It is quite natural to expect that this will be much facilitated if the union on the r.h.s.~of \eqref{eq:partialW-invariance-1} is disjoint. Indeed, for general Toeplitz sequences, Bu\l{}atek and Kwiatkowski \cite{Bulatek1990} studied the centralizer problem under this assumption, because
their condition {\Sh} is equivalent to the \emph{disjointness condition}
\begin{equation}\tag{D}\label{eq:D}
\partial W\cap(\partial W+\Delta(k))=\emptyset\text{\; for all }k\in\Z\setminus\{0\}.
\end{equation}
Namely, condition {\Sh} is satisfied if and only if each $T$-orbit in $G$ hits the set of discontinuities of $\phi$ at most once, which is clearly equivalent to property \eqref{eq:D}, see also the remark after Definition~1 in \cite{DKL1995}.

As in \cite[Prop.~3]{Bulatek1990} it follows that
\begin{enumerate}[1)]
\item each fibre over a point in the MEF contains either exactly one or exactly two points, and
\item there exists $N\in\N$ such that $\partial W+y_F\subseteq\bigcup_{|k|\leqslant N}\partial W+\Delta(k)$.\label{eq:partialW-invariance-1-strong}
\end{enumerate}
We will introduce a weaker disjointness condition which also implies \ref{eq:partialW-invariance-1-strong}), see Proposition~\ref{prop:use(D')-new}:
\begin{equation}\tag{D'}\label{eq:D'}
\forall k\in\Z\setminus\{0\}:\ \partial W\cap(\partial W-\Delta(k))\text{ is nowhere dense w.r.t.~the subspace topology of }\partial W.
\end{equation}
Moreover we need a strengthened version of this condition which, in many examples, will help to show that $\partial W+y_F\subseteq\partial W+\Delta(k_0)$ for a single $k_0\in\Z$:
\begin{equation}\tag{DD'}\label{eq:DD'}
\begin{split}
&\hspace*{5cm}\forall k\in\Z\setminus\{0\}\ \forall \beta\in G:\\
&\partial W\cap(\partial W-\beta)\cap(\partial W-2\beta-\Delta(k))\text{ is nowhere dense w.r.t.~the subspace topology of }\partial W.
\end{split}
\end{equation}
Conditions \eqref{eq:D'} and \eqref{eq:DD'} and an additional growth restriction on the arithmetic structure, see \eqref{eq:theo2-ass-2-new-a}, play essential roles for proving that the assumption of our basic Theorem~\ref{theo:basic-case} is satisfied. To prove \eqref{eq:D'} and \eqref{eq:DD'}, we assume in \eqref{eq:additional-structure} below that the sets of essential holes have some particular arithmetic structure motivated by the intended applications to $\cB$-free Toeplitz shifts, see Propositions~\ref{prop:invariance-new},~\ref{prop:sufficient-for-(D')} and~\ref{prop:local-invariance-new-2}.
We note here that the additional growth restriction excludes irregular Toeplitz shifts, see Remark~\ref{rem:regular1}.
In Theorem~\ref{theo:ess-holes-arithmetic} we show that $\cB$-free Toeplitz subshifts indeed satisfy the structural assumption~\eqref{eq:additional-structure}.

Theorem~1 of \cite{Bulatek1990} provides a characterization of the triviality of the centralizer. The example $\cB_1=\{2^nc_n:n\in\N\}$ with coprime odd $c_n>1$ from \cite{Bartnicka2017} satisfies \eqref{eq:D}, but we were unable
to evaluate the criterion from \cite[Theorem~1]{Bulatek1990} for it. Instead we will show that the example not only satisfies
$\partial W+y_F\subseteq\bigcup_{|k|\leqslant N}\partial W+\Delta(k)$, but that there exists a single integer $k_0$ such that $\partial W+y_F\subseteq\partial W+\Delta(k_0)$, i.e.~ $\partial W+(y_F-\Delta(k_0))\subseteq\partial W$, see Example~\ref{ex:power:identity}.
The same holds for the example $\cB_1'=\cB_1\cup\{c_1^2\}$ and for further generalizations of this kind.
We shall see that this is the key to control the centralizer with modest efforts in Corollary~\ref{coro:simple-case}: In case of $\cB_1$ the centralizer is trivial (see also \cite{Bartnicka2017}),
while for $\cB_1'$ our approach only yields that the \mbox{$c_1$-th} iterate of each centralizer element is trivial. Indeed, we will show for this example that the centralizer has an element of order $c_1$, see Proposition~\ref{prop:non-trivial_fin_ord} and Remark~\ref{rem:finite-order}. More generally, we will show that elements of the centralizer can be of arbitrarily large finite order, see Corollary~\ref{coro:large-centralizer}.

Already the example $\cB_2=\{2^nc_n, 3^nc_n:n\in\N\}$, with coprime  $c_n>1$ also coprime to $2$ and $3$ and satisfying $\prod_{n\in\N}\left(1-\frac{1}{c_n}\right)>\frac{1}{2}$, violates condition \eqref{Sh} but satisfies conditions \eqref{eq:D'} and \eqref{eq:DD'}, see Example~\ref{ex:2^i3^i}.
In Section~\ref{sec:complexity} we show that the shift determined by $\cB_2$ has superpolynomial complexity (the same holds for $\cB_1$), so that the results from \cite{MR3324942, MR3430839, Donoso2015} do not apply. We also deliver examples for which not all holes are essential, see Examples~\ref{ex:not_all_holes_new} and~\ref{ex:not-always-GH}.
\section{The abstract regular Toeplitz case}\label{sec:abstract_Toeplitz}
\subsection{A first basic theorem}
Let $\eta\in\{0,1\}^\Z$ be a non-periodic Toeplitz sequence with period structure
$p_1\mid p_2\mid p_3 \dots\to\infty$.  That means for each $k\in\Z$ there exists $n\ge1$ such that $\eta_{|k+p_n\Z}$ is constant, but $\eta$ is not periodic.
For $i=0,1$ denote
$\cP_n^i=\{k\in\Z: \eta_{|k+p_n\Z}=i\}$\footnote{We use this notation, because it is shorter than $\operatorname{Per}_{p_n}(\eta,i)$ established in the literature.} and $\cH_n=\Z\setminus (\cP_n^0\cup\cP_n^1)$.
$\cH_n$ is called the set of \emph{holes at level $n$}.

The odometer group
$G=\varprojlim\Z/p_n\Z$ with the $\Z$-action  ``addition of $1$'' ($T:G\to G$, $(Tg)_n=g_n+1$) is the MEF of the subshift $X_\eta$, which is the orbit closure of $\eta$ under the left shift $\sigma$ on $\{0,1\}^\Z$. In symbols: $\pi:(X_\eta,\sigma)\to(G,T)$. Observe that $\cH_n\subseteq\cH_N$ if $n>N$.

In \cite{Baake-Jaeger-Lenz2015}, a cut and project scheme is associated with $\eta$ by specifying a compact and topologically regular window $W\subseteq G$:
For $h\in G$ let $U_n(h)=\{g\in G: g_n=h_n\}$ and define
\begin{equation*}
V^i=\bigcup_{n\in\N}\bigcup_{k\in\cP_n^i\cap[0,p_n)}U_n(\Delta(k))
\end{equation*}
for $i=0,1$. From \cite[Theorem~1 and its proof]{Baake-Jaeger-Lenz2015} we see:
\begin{compactenum}
\item $W:=\overline{V^1}=G\setminus V^0$ is topologically regular, i.e.~ $\overline{\inn(W)}=W$,
\item $\partial W=G\setminus(V^0\cup V^1)$,
\item $\Delta(k)\in V^i$ iff $\eta_k=i$ for $i=0,1$ and all $k\in\Z$, in particular
$\eta_k=1$ iff $\Delta(k)\in W$.
\end{compactenum}

\begin{lemma}\label{lemma:newG-1}
$h\in\partial W$ if and only if $h_N\in\cH_N$ for all $N\in\N$.
\end{lemma}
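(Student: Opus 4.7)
The plan is to unwind definitions, using the key fact that $\partial W = G\setminus(V^0\cup V^1)$, so the equivalence $h\in\partial W \iff h_N\in\cH_N \text{ for all } N$ reduces to showing $h\notin V^0\cup V^1 \iff h_N\notin \cP_N^0\cup\cP_N^1 \text{ for all } N$ (where we identify the residue class $h_N\in\Z/p_N\Z$ with any integer representative, which is unambiguous because $\cP_N^0$, $\cP_N^1$, and $\cH_N$ are all $p_N$-periodic subsets of $\Z$).

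First I would fix $n$ and $h\in G$ and examine when $h\in\bigcup_{k\in\cP_n^i\cap[0,p_n)}U_n(\Delta(k))$. By definition of the cylinder sets, $h\in U_n(\Delta(k))$ iff $h_n = k + p_n\Z$; since $[0,p_n)$ contains exactly one representative of each class in $\Z/p_n\Z$, there is a unique such $k$, namely the representative of $h_n$ in $[0,p_n)$. This representative belongs to $\cP_n^i$ precisely when $h_n \in \cP_n^i\!\!\mod p_n$. Taking the union over $n$ yields the characterization
\begin{equation*}
h\in V^i \iff \exists n\in\N\colon\ h_n\in\cP_n^i \!\!\mod p_n.
\end{equation*}

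Combining the two cases $i=0,1$: $h\in V^0\cup V^1$ iff there exists some $n$ with $h_n\in(\cP_n^0\cup\cP_n^1)\!\!\mod p_n$, equivalently $h_n\notin\cH_n\!\!\mod p_n$. Taking the negation and using $\partial W = G\setminus(V^0\cup V^1)$ gives exactly $h\in\partial W \iff h_N\in\cH_N \text{ for all } N\in\N$, as required.

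There is no real obstacle here; the main point to be careful about is just the identification of residue classes in $\Z/p_N\Z$ with their preimages in $\Z$, which is legitimate because $\cH_N$ is a union of residue classes mod $p_N$ by construction. (One can also observe as a sanity check that the nesting $\cH_n\subseteq\cH_N$ for $n>N$ is compatible with the claim, since $h_n\in\cH_n\!\!\mod p_n$ forces $h_N = h_n \!\!\mod p_N\in\cH_N\!\!\mod p_N$: if $\eta$ were constant on the larger arithmetic progression $h_N+p_N\Z$, it would also be constant on the smaller subprogression $h_n+p_n\Z$.)
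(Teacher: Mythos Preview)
Your proof is correct and follows essentially the same route as the paper's: both start from $\partial W = G\setminus(V^0\cup V^1)$, unfold the definition of $V^i$ to obtain $h\in V^i \Leftrightarrow \exists n\colon h_n\in\cP_n^i$, and then negate. The paper presents this as a terse chain of equivalences for $h\notin\partial W$, while you spell out the same steps more carefully (including the handling of residue classes), but the argument is the same.
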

\begin{proof}
\begin{equation*}
\begin{split}
h\not\in\partial W
&\Leftrightarrow
h\in\bigcup_{i\in\{0,1\}}V^i
\Leftrightarrow
h\in\bigcup_{i\in\{0,1\}}\bigcup_{n\in\N}\bigcup_{k\in\cP_n^i\cap[0,p_n)}U_n(\Delta(k))\\
&\Leftrightarrow
\exists i\in\{0,1\}\ \exists n\in\N\ \exists k\in\cP_n^i\cap[0,p_n): h_n=k\\
&\Leftrightarrow
\exists i\in\{0,1\}\ \exists n\in\N: h_n\in\cP_n^i\\
&\Leftrightarrow
\exists n\in\N: h_n\not\in \cH_n
\end{split}
\end{equation*}
\end{proof}
\begin{definition}[Essential holes]\label{def:essential-holes}
The set of \emph{essential holes at level $N$} is defined as
\begin{equation}\label{eq:wcH_n}
\wcH_N:=\{k\in\cH_N: \cH_n\cap(k+p_N\Z)\neq\emptyset\text{ for all }n\ge N\}.
\end{equation}
\end{definition}
\begin{definition}\label{def:q_n}
The minimal periods of $\cH_n$ and $\wcH_n$ are denoted by $\tau_n$ and $\wq_n$, respectively.
\end{definition}
\begin{remark}
\begin{compactenum}[a)]
\item $\wcH_n\subseteq\wcH_N$ if $n>N$.
\item $\cH_N$ and $\wcH_N$ are $p_N$-periodic by definition - although this need not be their minimal period. (For $\wcH_N$ just observe that $k\in\wcH_N$ if and only if $k+p_N\in\wcH_N$.) Hence $\tau_N\mid p_N$ and $\wq_N\mid p_N$, so that expressions like ``$\tau_N\mid h_N$'' or ``$\gcd(\wq_N,h_N)$'' are well defined, when $h_N\in\Z/p_N\Z$.\footnote{This is consistent with the following general convention: an element $z$ of an abelian group $Z$ is divisible by $n\in\N$ if 
$z\in nZ$. If $Z$ is a cyclic group and $z_1,z_2\in Z$ then $\gcd(z_1,z_2)$ is a generator of the subgroup of $Z$ generated by $z_1$ and $z_2$. If $Z=\Z$ then we choose $\gcd(z_1,z_2)$ to be positive by convention. Finally, if $n\in \Z$ and $z+p\Z\in\Z/p\Z$, then we understand by $\gcd(n,z+p\Z)$ the (positive) generator of the group generated by $n$ and $z+p\Z$; this is the greatest common divisor of the numbers $n,z$ and $p$ in the usual sense.}
\end{compactenum}
\end{remark}

\begin{lemma}\label{lemma:newG-2}
\begin{compactenum}[a)]
\item $k\in\wcH_N$ if and only if $U_N(\Delta(k))\cap\partial W\neq\emptyset$.
\item $h\in\partial W$ if and only if $h_N\in\wcH_N$ for all $N\in\N$.
\end{compactenum}
\end{lemma}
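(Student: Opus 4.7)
My plan is to derive both parts of the lemma by combining the characterization of $\partial W$ in Lemma~\ref{lemma:newG-1} with a standard inverse-limit argument in the odometer $G=\varprojlim\Z/p_n\Z$.

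For the ``$\Leftarrow$'' direction of (a), if $h\in\partial W\cap U_N(\Delta(k))$, then $h_N=k+p_N\Z$ and Lemma~\ref{lemma:newG-1} yields $h_n\in\cH_n$ for every $n$; in particular $k\in\cH_N$, and for each $n\ge N$ any integer representative of $h_n$ lies in $\cH_n\cap(k+p_N\Z)$, so $k\in\wcH_N$.

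For the ``$\Rightarrow$'' direction of (a), I build $h\in\partial W$ with $h_N=k+p_N\Z$ as a coherent choice of hole representatives. For $n\ge N$, set
\begin{equation*}
A_n:=\{a\in\Z/p_n\Z : a\in\cH_n\text{ and }a\equiv k\pmod{p_N}\},
\end{equation*}
which makes sense because $\cH_n$ is $p_n$-periodic and $p_N\mid p_n$. Each $A_n$ is a nonempty finite set, directly by the definition of $\wcH_N$. Since $\cH_m\subseteq\cH_n$ whenever $m\ge n\ge N$, the canonical projection $\Z/p_m\Z\to\Z/p_n\Z$ sends $A_m$ into $A_n$; hence $(A_n)_{n\ge N}$ forms an inverse system of nonempty finite sets, whose limit is nonempty. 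This produces $h\in G$ with $h_n\in A_n\subseteq\cH_n$ for every $n\ge N$ and $h_N=k+p_N\Z$. For $n<N$, $h_n=h_N\bmod p_n=k+p_n\Z$ lies in $\cH_n$ because $k\in\cH_N\subseteq\cH_n$ and $\cH_n$ is $p_n$-periodic. Lemma~\ref{lemma:newG-1} then gives $h\in\partial W$, and $h\in U_N(\Delta(k))$ by construction.

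Part (b) is immediate from (a) together with Lemma~\ref{lemma:newG-1}: if $h\in\partial W$, then $h\in U_N(\Delta(h_N))\cap\partial W\ne\emptyset$, so $h_N\in\wcH_N$ by (a); conversely $\wcH_N\subseteq\cH_N$ for every $N$ reduces the hypothesis to that of Lemma~\ref{lemma:newG-1}. The only real obstacle is the inverse-limit step in the nontrivial half of (a): one must verify that the projections between successive levels really carry $A_m$ into $A_n$ (using both $\cH_m\subseteq\cH_n$ and preservation of the residue modulo $p_N$) before invoking the classical nonemptiness of the inverse limit of nonempty finite sets.
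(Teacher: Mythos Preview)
Your proof is correct and follows essentially the same approach as the paper: both directions of (a) rest on Lemma~\ref{lemma:newG-1}, and (b) follows from (a). The only difference is cosmetic: for the ``$\Rightarrow$'' direction of (a) the paper picks $r_j\in\cH_j\cap(k+p_N\Z)$, takes an accumulation point of $\Delta(r_j)$ in the compact group $G$, and checks it lands in $U_N(\Delta(k))\cap\partial W$, whereas you phrase the same construction as the nonemptiness of the inverse limit of the finite sets $A_n$ --- these are two standard packagings of the same compactness argument.
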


\begin{proof}
a)\; Let $h\in U_N(\Delta(k))\cap\partial W$. Then $k\in h_N+p_N\Z\subseteq\cH_N+p_N\Z=\cH_N$ by Lemma~\ref{lemma:newG-1}, and for all $n\ge N$ we have in view of Lemma~\ref{lemma:newG-1}: $h_n\in\cH_n\cap(h_N+p_N\Z)=\cH_n\cap(k+p_N\Z)$. For the reverse implication let $k\in\wcH_N$. We construct $h\in\partial W$ with $h_N=k\mod p_N$: for $j>N$ there is $r_j\in\cH_j\cap(k+p_N\Z)$. Let $h^{(j)}=\Delta(r_j)$ and fix any accumulation point $h$ of the sequence $(h^{(j)})_j$.
Consider any $n\ge N$. For some sufficiently large $j_n\ge n$ we have $h_n=h_n^{(j_n)}=r_{j_n}\mod p_n$.
Hence $h_N\in r_{j_N}+p_N\Z=k+p_N\Z$, so that $h\in U_N(\Delta(k))$, and $h_n\in r_{j_n}+p_n\Z\subseteq\cH_{j_n}+p_n\Z\subseteq \cH_n+p_n\Z=\cH_n$, so that $h\in\partial W$ by Lemma~\ref{lemma:newG-1}.\\
b)\;
\begin{equation*}
\begin{split}
h\in\partial W
&\Leftrightarrow
\forall N\in\N:\ U_N(h)\cap\partial W\neq\emptyset\\
&\Leftrightarrow
\forall N\in\N:\ U_N(\Delta(h_N))\cap\partial W\neq\emptyset\\
&\Leftrightarrow
\forall N\in\N:\ h_N\in\wcH_N\quad\text{by part a).}
\end{split}
\end{equation*}
\end{proof}
The following simple lemma is basic for our approach.
\begin{lemma}\label{lemma:basic-new}
\begin{compactenum}[a)]
\item If $h+\beta\Z\subseteq\partial W$ for some $h,\beta\in G$, then
$h_n+\gcd(\beta_n,\wq_n)\Z\subseteq\wcH_n$ for all $n>0$.
\item
If $\partial W+\beta\subseteq\partial W$ for some $\beta\in G$, then
$\wq_n\mid\beta_n$ for all $n>0$.
\end{compactenum}
\end{lemma}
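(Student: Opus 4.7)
The plan is to reduce both claims, level by level, to statements about $\wcH_n\subseteq\Z/p_n\Z$ by means of Lemma~\ref{lemma:newG-2}, and then to exploit the defining $\wq_n$-periodicity of $\wcH_n$. No further structure of $\partial W$ is needed beyond this level-wise characterization.

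For part a), I would fix $n\ge 1$ and observe that the hypothesis $h+\beta\Z\subseteq\partial W$ means $h+k\beta\in\partial W$ for every $k\in\Z$, so applying Lemma~\ref{lemma:newG-2}(b) at level $n$ yields $h_n+k\beta_n\in\wcH_n$ for all $k\in\Z$, with the addition taking place in $\Z/p_n\Z$. Thus the coset $h_n+\beta_n\Z\subseteq\Z/p_n\Z$ of the cyclic subgroup generated by $\beta_n$ is entirely contained in $\wcH_n$. Because $\wcH_n$ is $\wq_n$-periodic by the very definition of $\wq_n$, it is a union of cosets of $\wq_n\Z\le\Z/p_n\Z$, so adding $\wq_n\Z$ keeps us inside $\wcH_n$, i.e.
\[
h_n+(\beta_n\Z+\wq_n\Z)\subseteq\wcH_n.
\]
Since $\wq_n\mid p_n$, the conventions from the footnote after Definition~\ref{def:q_n} identify the subgroup $\beta_n\Z+\wq_n\Z$ of $\Z/p_n\Z$ with $\gcd(\beta_n,\wq_n)\Z$, which is exactly the assertion.

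For part b), I would again fix $n\ge 1$ and pick any $k\in\wcH_n$. By Lemma~\ref{lemma:newG-2}(a) there exists $h\in U_n(\Delta(k))\cap\partial W$, in particular $h_n=k$ in $\Z/p_n\Z$. The hypothesis yields $h+\beta\in\partial W$, and Lemma~\ref{lemma:newG-2}(b) at level $n$ then produces $k+\beta_n=h_n+\beta_n\in\wcH_n$. Hence $\wcH_n+\beta_n\subseteq\wcH_n$, and since $\wcH_n\subseteq\Z/p_n\Z$ is finite, equality must hold; in other words, $\beta_n$ is a period of $\wcH_n$. Minimality of $\wq_n$ then forces $\wq_n\mid\beta_n$.

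The proof is essentially formal once Lemma~\ref{lemma:newG-2} is in hand, so there is no genuine obstacle. The only step demanding some attention is purely notational: one must consistently view $\beta_n\Z$, $\wq_n\Z$ and $\gcd(\beta_n,\wq_n)$ as objects attached to the cyclic group $\Z/p_n\Z$ in the sense of the footnote after Definition~\ref{def:q_n}, and use $\wq_n\mid p_n$ to identify the subgroup $\beta_n\Z+\wq_n\Z\le\Z/p_n\Z$ with $\gcd(\beta_n,\wq_n)\Z$.
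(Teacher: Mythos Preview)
Your proof is correct and follows essentially the same route as the paper's. Part~a) is identical. For part~b) the paper instead invokes part~a) (after noting $h+\beta\Z\subseteq\partial W$) to obtain $j+\gcd(\beta_n,\wq_n)\Z\subseteq\wcH_n$ for every $j\in\wcH_n$ and then concludes $\wq_n\mid\gcd(\beta_n,\wq_n)$; your direct argument via $\wcH_n+\beta_n\subseteq\wcH_n$ and finiteness of $\Z/p_n\Z$ is a minor but perfectly clean shortcut.
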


\begin{proof}
a)\;$h_n+\beta_n\Z\subseteq\wcH_n$ for all $n$ by Lemma~\ref{lemma:newG-2}b).
As $\wcH_n$ is $\wq_n$-periodic, this implies $h_n+\gcd(\beta_n,\wq_n)\Z\subseteq\wcH_n$.
\\
b)\;Let $j\in\wcH_n$. Then there is some $h\in U_n(\Delta(j))\cap\partial W$ by Lemma~\ref{lemma:newG-2}a). Now $h+\beta\Z\subseteq\partial W$ by assumption. As $h_n=j$, this implies, in view of part a) of the lemma, $j+\gcd(\beta_n,\wq_n)\Z\subseteq\wcH_n$. As this holds for each $j\in\wcH_n$, we have $\wcH_n+\gcd(\beta_n,\wq_n)\Z=\wcH_n$, and as $\wq_n$ is the minimal period of $\wcH_n$, we conclude that $\wq_n\mid\beta_n$.
\end{proof}

\begin{remark}\label{rem:regular}
If $h+\beta\Z\subseteq\partial W$, the lemma implies
\begin{equation*}
\ddelta(\cH_n)\ge \ddelta(\wcH_n)\ge \frac{1}{\gcd(\beta_n,\wq_n)}.
\end{equation*}
Therefore the last lemma seems to be useful only for regular Toeplitz shifts, because for irregular ones $\inf_n\ddelta(\cH_n)>0$, so that no useful lower bound on $\gcd(\beta_n,\wq_n)$ can be expected.
\end{remark}

Each automorphism $F$ of $(X_\eta,\sigma)$ determines an element $y_F\in G$ such that
$\pi(F(h))=\pi(h)+y_F$ for all $h\in G$. Observe that
\begin{equation}\label{eq:partialW-invariance-1-new}
\partial W+y_F\subseteq\bigcup_{k\in\Z}\left(\partial W+\Delta(k)\right),
\end{equation}
because $F$ leaves the set of non-one-point fibres over the MEF invariant.
We first focus on a stronger property than \eqref{eq:partialW-invariance-1-new}.
\begin{theorem}\label{theo:basic-case}
Recall that $\wq_n$ denotes the minimal period of $\wcH_n$.
\begin{compactenum}[a)]
\item If an automorphism $F$ of $(X_\eta,\sigma)$ satisfies $\partial W+y_F\subseteq\partial W+\Delta(k)$ for some $k=k_F\in\Z$, then $\wq_n\mid(y_F)_n-k_F$. In particular, if infinitely many $\wcH_n$ have minimal period $p_n$, then the centralizer of $(X_\eta,\sigma)$ is trivial.
\item Suppose that for each automorphism $F\in\Aut_\sigma(X_\eta)$ there exists a unique $k_F\in\Z$ such that
\begin{equation}\label{eq:unique_k}
\partial W+y_F\subseteq \partial W+\Delta(k_F).
\end{equation}
If $M:=\liminf_{n\to\infty}p_n/\wq_n<\infty$, then \[\Aut_\sigma(X_\eta)=\langle \sigma\rangle\oplus\Tor,\] where $\Tor$ denotes the torsion group of $\Aut_\sigma(X_\eta)$. Moreover, $\Tor$ is a cyclic group (possibly trivial), whose order divides $M$.
\end{compactenum}
\end{theorem}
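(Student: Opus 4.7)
For part a), the plan is a one-line application of Lemma~\ref{lemma:basic-new}b): setting $\beta := y_F - \Delta(k_F)$, the hypothesis reads $\partial W + \beta \subseteq \partial W$, whence $\wq_n \mid \beta_n = (y_F)_n - k_F$ for every $n$. For the ``in particular'' clause, assuming the hypothesis of part a) holds for every automorphism and that $\wq_n = p_n$ for infinitely many $n$, the conclusion just obtained gives $(y_F)_n \equiv k_F \pmod{p_n}$ for infinitely many $n$; coherence in the inverse limit then forces $y_F = \Delta(k_F)$, and the standard injectivity of $F \mapsto y_F$ on the centralizer (two automorphisms with the same rotation coincide on the residual set of singleton fibres and hence everywhere, by continuity) yields $F = \sigma^{k_F}$.

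For part b), the plan is to promote the uniqueness in \eqref{eq:unique_k} into a surjective group homomorphism $\Phi \colon \Aut_\sigma(X_\eta) \to \Z$, $\Phi(F) := k_F$, with $\sigma$ supplying a splitting. The homomorphism property follows from $y_{F_1 F_2} = y_{F_1} + y_{F_2}$ by chaining the two containments:
$\partial W + y_{F_1 F_2} = (\partial W + y_{F_2}) + y_{F_1} \subseteq \partial W + \Delta(k_{F_2}) + y_{F_1} \subseteq \partial W + \Delta(k_{F_1} + k_{F_2})$,
so uniqueness forces $k_{F_1 F_2} = k_{F_1} + k_{F_2}$. Since $\Phi(\sigma) = 1$, $\Phi$ is surjective, and because $\sigma$ is central in $\Aut_\sigma(X_\eta)$ by definition of the centralizer, the section $j \mapsto \sigma^j$ produces the direct sum decomposition $\Aut_\sigma(X_\eta) = \ker\Phi \oplus \langle \sigma\rangle$.

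It then remains to identify $\ker\Phi$ with $\Tor$ and to verify cyclicity of order dividing $M$. The inclusion $\Tor \subseteq \ker\Phi$ is immediate since $\Z$ is torsion-free. Conversely, if $F \in \ker\Phi$ then $\partial W + y_F \subseteq \partial W$, so Lemma~\ref{lemma:basic-new}b) gives $\wq_n \mid (y_F)_n$ for all $n$; along the infinite subsequence realising $p_n/\wq_n = M$, the element $(y_F)_n$ lies in the cyclic subgroup $\wq_n(\Z/p_n\Z)$ of order $M$, hence $M(y_F)_n = 0$, and coherence in $G = \varprojlim \Z/p_n\Z$ promotes this to $M y_F = 0$ globally. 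Injectivity of $F \mapsto y_F$ then gives $F^M = \id$, so $F \in \Tor$, and the same injectivity embeds $\Tor$ into the $M$-torsion subgroup $G[M]$. Since $G$ is pro-cyclic (inverse limit of finite cyclic groups along a directed system), $G[M]$ is a finite cyclic group of order dividing $M$, hence so is $\Tor$.

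The one step that does the real work is the passage from coordinatewise $M$-annihilation along the special subsequence to global $M$-annihilation in $G$, which relies on the fact that the integer liminf $M = \liminf_n p_n/\wq_n$ is attained on infinitely many indices — everything else is bookkeeping with the uniqueness property, the inverse limit structure, and the standard injectivity of the rotation map on the centralizer of an almost 1-1 extension.
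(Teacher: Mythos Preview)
Your proof is correct. Part a) is exactly the paper's argument. Part b) takes a genuinely different route, so a brief comparison is worthwhile.

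For part b) the paper proceeds as follows: it defines $J$ as the set of automorphisms with $k_F=0$, observes this is a subgroup meeting each $\langle\sigma\rangle$-coset exactly once, and shows $|J|\le M$ by a pigeonhole argument (if $F_1,\dots,F_{M+1}\in J$ were distinct, then at any index $n$ with $p_n/\wq_n=M$ the values $(y_{F_i})_n/\wq_n$ lie in $\{0,\dots,M-1\}$, forcing two $y_{F_i}$ to agree along a cofinal set of indices and hence globally). Finiteness of $J$ gives $J\subseteq\Tor$; the reverse inclusion uses that $F\in\Tor$ implies $ry_F=0$, whence $\partial W=\partial W+ry_F\subseteq\partial W+\Delta(rk_F)$ and uniqueness forces $k_F=0$. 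Cyclicity of $\Tor$ is then imported from \cite[Theorem~3.2(2)]{Donoso2017}, and the order bound comes from $p_n\mid M(y_F)_n$ at the special indices.

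Your argument packages the same ingredients more directly: you build the splitting homomorphism $\Phi$ up front, identify $\ker\Phi$ with $\Tor$ by showing $My_F=0$ for every $F\in\ker\Phi$ (this replaces the paper's pigeonhole step and its separate order computation by a single $M$-annihilation argument), and then obtain cyclicity and the order bound in one stroke by embedding $\Tor\hookrightarrow G[M]$ and noting that the $M$-torsion of a procyclic group is cyclic of order dividing $M$. The gain is self-containment: you avoid the external citation to \cite{Donoso2017} at the cost of a short remark on the structure of $G[M]$ (which follows since $\gcd(M,p_n)$ eventually stabilises). The paper's pigeonhole argument is perhaps more elementary in flavour but ends up outsourcing the cyclicity conclusion.
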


\begin{proof}
a)\; The first claim follows from Lemma~\ref{lemma:basic-new}b), the second one is just a special case of this.\\
b)\; In each residue class of $\Aut_\sigma(X_\eta)/\langle \sigma\rangle$ there is exactly one element $F$ for which the associated integer $k_F$ satisfying \eqref{eq:unique_k} equals $0$. These elements $F$ form a subgroup $J$ of $\Aut_\sigma(X_\eta)$.
Suppose for a contradiction that there are $M+1$ different automorphisms $F_1,\dots,F_{M+1}\in J$.
In view of Lemma~\ref{lemma:basic-new}b) they all satisfy
\begin{equation}\label{eq:divisibility-new}
\wq_n\mid (y_{F_{i}})_{S_n} \;\text{ for all }n\in\N
\end{equation}
Hence there exists arbitrarily large $n\in\N$ such that
$M=p_n/\wq_n$ and
\begin{equation*}
(y_{F_i})_{S_n}/\wq_n\in\{0,\dots,M-1\} \text{ for all $i=1,\dots,M+1$.}
\end{equation*}
 It follows that there exist two different $i,j\in\{1,\dots,M+1\}$ for which $(y_{F_i})_{S_n}=(y_{F_j})_{S_n}$ for infinitely many $n$, which is only possible if $y_{F_i}=y_{F_j}$.
In view of
\cite[Lemma~2.4]{Donoso2015} this implies $F_i=F_j\mod\langle \sigma\rangle$, so that $i=j$,
a contradiction.
Hence $J$ is a finite group
of order  $m\leqslant M$, say $J=\{F_1,\dots,F_m\}$. In particular, $J\subseteq\Tor$.
On the other hand, if $F\in\Tor$, then $ry_F=\Delta(0)\in G$ for some positive integer~$r$. Hence, with the integer $k_F$ satisfying \eqref{eq:unique_k}, we have
\[
\partial W+y_{Id}-\Delta(rk_F)=\partial W-\Delta(rk_F)=\partial W+ry_F-\Delta(rk_F)\subseteq \partial W,
\]
but $k_{Id}=0$, so \eqref{eq:unique_k} implies $k_F=0$. It follows that $F\in J$, and we proved that $J=\Tor$. Theorem~3.2(2) of \cite{Donoso2017} then implies that $\Tor$ is cyclic, and $\Aut_\sigma(X_\eta)=J\oplus\langle \sigma\rangle=\Tor\oplus\ \langle \sigma\rangle$.

It remains to determine the order $m$ of $\Tor$:
Fix some $F\in\Tor$.
In view of \eqref{eq:divisibility-new},
\begin{equation*}
p_n=M\cdot\wq_n\mid M\cdot(y_F)_{S_n}
\equiv (y_{F^M})_{S_n}\mod p_n
\end{equation*}
for all $n\in\N$.
Hence $F^M=\id_{X_\eta}$, so that the order of $F$ is a divisor of $M$.
\end{proof}

In order to apply this theorem we need to verify assumption \eqref{eq:unique_k} and to determine the periods $\wq_n$.
So we focus next on finding sufficient conditions that imply
$\partial W+(y_F-\Delta(k))\Z\subseteq\partial W$.

\subsection{The separated holes conditions and its variants}
\label{subsec:separated-holes-contition}

The \emph{Separated holes condition} (Sh) was introduced in \cite{Bulatek1990}. It requires
\begin{equation}\tag{Sh}\label{Sh}
\forall k\in\Z\setminus\{0\}\ \exists N\in\N\ \forall n\ge N: \cH_n\cap(\cH_n-k)=\emptyset.
\end{equation}
As mentioned in the introduction, it is equivalent to the {\em Disjointness condition}
\begin{equation}\tag{D}\label{D}
\forall\ k\in\Z\setminus\{0\}: \partial W\cap (\partial W-\Delta(k))=\emptyset.
\end{equation}
Indeed, it is even equivalent to the {\em Separated essential holes condition}
\begin{equation}\tag{Seh}\label{Seh}
\forall k\in\Z\setminus\{0\}\ \exists N\in\N\ \forall n\ge N: \wcH_n\cap(\wcH_n-k)=\emptyset.
\end{equation}
As we do not make use of this equivalence, we leave it as an exercise.

The following variants of the Separated essential holes condition, which allow to study Toeplitz subshifts that violate \eqref{Sh}, will play important roles, however, so we provide proofs for the corresponding equivalences:
\begin{compactitem}[-]
\item {\em weak Disjointness condition} \eqref{D'}, equivalent to {\em weak Separated essential holes condition} \eqref{Seh'}:
\begin{equation}\tag{D'}\label{D'}
\forall k\in\Z\setminus\{0\}:\ \partial W\cap(\partial W-\Delta(k))\text{ is nowhere dense w.r.t.~the subspace topology of }\partial W.
\end{equation}
\begin{equation}\tag{Seh'}\label{Seh'}
\begin{split}
&\forall k\in\Z\setminus\{0\}:\
\text{there is no arithmetic progression }r+p_N\Z\text{ such that}\\
&\hspace*{0.7cm}\forall n\ge N:\ \emptyset\neq(r+p_N\Z)\cap\wcH_n\subseteq\wcH_n-k.
\end{split}
\end{equation}
\item {\em weak Double Disjointness condition} \eqref{DD'}, equivalent to {\em weak Double Separated essential holes condition} \eqref{DSeh'}:
\begin{equation}\tag{DD'}\label{DD'}
\begin{split}
&\hspace*{5cm}\forall k\in\Z\setminus\{0\}\ \forall \beta\in G:\\
&\partial W\cap(\partial W-\beta)\cap(\partial W-2\beta-\Delta(k))\text{ is nowhere dense w.r.t.~the subspace topology of }\partial W.
\end{split}
\end{equation}
\begin{equation}\tag{DSeh'}\label{DSeh'}
\begin{split}
&\forall k\in\Z\setminus\{0\}\ \forall \beta\in G:\
\text{there is no arithmetic progression }r+p_N\Z\text{ such that}\\
&\hspace*{0.7cm}\forall n\ge N:\ \emptyset\neq(r+p_N\Z)\cap\wcH_n\subseteq (\wcH_n-\beta_n)\cap(\wcH_n-2\beta_n-k).
\end{split}
\end{equation}
\end{compactitem}
Observe that for $\beta=0$ conditions \eqref{DD'} and \eqref{DSeh'} reduce to \eqref{D'} and \eqref{Seh'}, respectively. Moreover, conditions~\eqref{D} and \eqref{Sh} clearly imply \eqref{D'} and \eqref{Seh'}, respectively.

In the sequel we will assume \eqref{DD'} -- indeed, for some results only the weaker \eqref{D'} is needed. In the $\cB$-free setting, \eqref{DD'} will be verified under suitable assumptions in Proposition~\ref{prop:local-invariance-new-2}.

Both equivalences above, namely $\lnot$\eqref{D'} $\Leftrightarrow$ $\lnot$\eqref{Seh'} and $\lnot$\eqref{DD'} $\Leftrightarrow$ $\lnot$\eqref{DSeh'}, follow immediately from the next lemma.

\begin{lemma}\label{lemma:weak-separated-essential-holes-contition}
Let $k\in\Z\setminus\{0\}$, $\beta\in G$, $N>0$ and $r\in\Z$. Then
\begin{equation}\label{eq:non-DD'}
\emptyset\neq U_N(\Delta(r))\cap\partial W\subseteq(\partial W-\beta)\cap(\partial W-2\beta-\Delta(k))
\end{equation}
if and only if
\begin{equation}\label{eq:non-DSeh'}
\forall n\ge N:\
\emptyset\neq (r+p_N\Z)\cap\wcH_n\subseteq(\wcH_n-\beta_n)\cap(\wcH_n-2\beta_n-k)
\end{equation}
\end{lemma}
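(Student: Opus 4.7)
The proof is a direct unwinding of definitions using Lemma~\ref{lemma:newG-2}, which already encodes the dictionary between points of $\partial W$ and coherent sequences in $\bigl(\wcH_n\bigr)_n$. The two implications are essentially symmetric, the only technical point being how the $n<N$ coordinates are recovered in the backward direction.

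\textbf{Forward direction} ($\eqref{eq:non-DD'}\Rightarrow\eqref{eq:non-DSeh'}$). For nonemptiness, pick any $h\in U_N(\Delta(r))\cap\partial W$. Then $h_n\in r+p_N\Z$ for every $n\ge N$, while Lemma~\ref{lemma:newG-2}b) gives $h_n\in\wcH_n$. For the inclusion, fix $n\ge N$ and $j\in(r+p_N\Z)\cap\wcH_n$. By Lemma~\ref{lemma:newG-2}a) there exists $h\in U_n(\Delta(j))\cap\partial W$; since $j\equiv r\!\pmod{p_N}$, automatically $h\in U_N(\Delta(r))\cap\partial W$. Hypothesis \eqref{eq:non-DD'} then yields $h+\beta\in\partial W$ and $h+2\beta+\Delta(k)\in\partial W$, and reading off their $n$-th coordinates via Lemma~\ref{lemma:newG-2}b) gives $j+\beta_n\in\wcH_n$ and $j+2\beta_n+k\in\wcH_n$, as required.

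\textbf{Backward direction} ($\eqref{eq:non-DSeh'}\Rightarrow\eqref{eq:non-DD'}$). For nonemptiness, specialize to $n=N$: if $j\in(r+p_N\Z)\cap\wcH_N$, then $p_N$-periodicity of $\wcH_N$ forces $r\in\wcH_N$, hence $U_N(\Delta(r))\cap\partial W\ne\emptyset$ by Lemma~\ref{lemma:newG-2}a). For the inclusion, let $h\in U_N(\Delta(r))\cap\partial W$; by Lemma~\ref{lemma:newG-2}b) it suffices to verify that $(h+\beta)_n\in\wcH_n$ and $(h+2\beta+\Delta(k))_n\in\wcH_n$ for every $n\in\N$. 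When $n\ge N$, one has $h_n\in(r+p_N\Z)\cap\wcH_n$ (since $h_N=r$ and $h\in\partial W$), and the assumed inclusion in \eqref{eq:non-DSeh'} delivers the two memberships directly.

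\textbf{Handling $n<N$.} This is the one step worth care. The definition of essential holes yields the monotonicity $\wcH_m\subseteq\wcH_n$ as subsets of $\Z$ whenever $n<m$ (if $k\in\wcH_m$ then $k\in\cH_m\subseteq\cH_n$, and the witness condition $\cH_\ell\cap(k+p_n\Z)\ne\emptyset$ holds for all $\ell\ge n$ because $k\in\cH_\ell$ for $n\le\ell<m$ and $\cH_\ell\cap(k+p_m\Z)\ne\emptyset\subseteq k+p_n\Z$ for $\ell\ge m$). Consequently the projection $\Z/p_m\Z\to\Z/p_n\Z$ sends $\wcH_m$ into $\wcH_n$. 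Applying this with $m=N$, the already established facts $(h+\beta)_N\in\wcH_N$ and $(h+2\beta+\Delta(k))_N\in\wcH_N$ descend to the required coordinates at every $n<N$, completing the proof.

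The only mild obstacle is this last monotonicity bookkeeping; everything else is a translation through the characterizations in Lemma~\ref{lemma:newG-2}.
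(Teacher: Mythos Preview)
Your proof is correct and follows essentially the same approach as the paper, translating between $\partial W$ and the $\wcH_n$ via Lemma~\ref{lemma:newG-2}. Your backward nonemptiness argument is slightly more direct than the paper's (you invoke Lemma~\ref{lemma:newG-2}a) immediately, using $p_N$-periodicity of $\wcH_N$, whereas the paper reconstructs a point of $\partial W$ by a compactness/limit argument), and you explicitly spell out the $n<N$ monotonicity step that the paper leaves implicit when it applies Lemma~\ref{lemma:newG-2}b) at the end.
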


\begin{proof}
Suppose that \eqref{eq:non-DD'} holds. Then, for all $n\ge N$, there is $r_n\in r+p_N\Z$ such that $U_n(\Delta(r_n))\cap\partial W\neq\emptyset$, whence $r_n\in(r+p_N\Z)\cap\wcH_n$ in view of Lemma~\ref{lemma:newG-2}a).

Now consider any
$r_n\in(r+p_N\Z)\cap\wcH_n$. By the inclusion in~\eqref{eq:non-DD'},
\begin{equation*}
U_n(\Delta(r_n))\cap\partial W\subseteq U_N(\Delta(r_n))\cap\partial W
=U_N(\Delta(r))\cap\partial W\subseteq(\partial W-\beta)\cap(\partial W-2\beta-\Delta(k)),
\end{equation*}
so that, by Lemma~\ref{lemma:newG-2}a) again,
$r_n+\beta_n\in\wcH_n$ and $r_n+2\beta_n+k\in\wcH_n$, which proves~\eqref{eq:non-DSeh'}.

Conversely, suppose that \eqref{eq:non-DSeh'} holds. For each $n\ge N$ there is some $r_n\in(r+p_N\Z)\cap\wcH_n$, and we find a subsequence $\Delta(r_{n_i})$ that converges to some $h\in G$. Hence, for each $m>0$ there is $n_i\ge m$ such that $h_m\in r_{n_i}+p_m\Z\subseteq \wcH_{n_i}+p_m\Z\subseteq\wcH_m+p_m\Z=\wcH_m$, so that
$h\in\partial W$ in view of Lemma~\ref{lemma:newG-2}b). As $h_N\in r_{n_i}+p_N\Z=r+p_N\Z$ for some $n_i$, this shows that $h\in U_N(\Delta(r))\cap\partial W$.

Now consider any $h\in U_N(\Delta(r))\cap\partial W$. Then, for all $n\ge N$, $U_n(h)\cap\partial W\neq\emptyset$, so that $h_n\in (r+p_N\Z)\cap\wcH_n$, where we used
Lemma~\ref{lemma:newG-2}a) once more. The inclusion in~\eqref{eq:non-DSeh'}
then implies $h_n\in (\wcH_n-\beta_n)\cap(\wcH_n-2\beta_n-k)$ for all $n\ge N$. Now
Lemma~\ref{lemma:newG-2}b) shows that $h+\beta\in\partial W$ and $h+2\beta+\Delta(k)\in\partial W$, which proves~\eqref{eq:non-DD'}.
\end{proof}
\begin{remark}\label{remark:condition(*)}
Suppose that the condition (*) from \cite{MR1199322} holds,
i.e. for any $s\in\Z$
\begin{equation*}
[sp_n,(s+1)p_n)\cap\cH_n\subset\cH_{n+1}\text{ or }[sp_n,(s+1)p_n)\cap\cH_{n+1}=\emptyset,
\end{equation*}
and
recall that it implies triviality of the centralizer~\cite[Theorem 1]{MR1199322}.  Since we study only non-periodic Toeplitz sequences, for any $n\in\N$ there exists some $s\in\Z$ such that $[sp_n,(s+1)p_n)\cap\cH_n\subset\cH_{n+1}$. So $\wcH_n=\cH_n$ for any $n\geq1$. A simple inductive reasoning shows that condition (*) implies: for any $n\ge N$, any $r,r'\in\cH_N\cap[0,p_N)$ and any $s\in\Z$ we have
\begin{equation*}
r+sp_N\in\cH_n\Leftrightarrow r'+sp_N\in\cH_n\Leftrightarrow r+sp_N\in\cH_n-(r'-r),
\end{equation*}
in particular $(r+p_N\Z)\cap \cH_n\subseteq\cH_n-(r'-r)=\wcH_n-(r'-r)$.
Hence, if $[0,p_N)$ contains at least two holes at level $N$, then \eqref{Seh'} does not hold. If, on the other hand, $\cH_n\cap[0,p_n)$ is a singleton for any $n\geq N$, then the distance between consecutive holes at level $n$ is $p_n$, so even~\eqref{Sh} holds.
\end{remark}

\begin{remark}\label{remark:holes_periods}
Given $k\in\cH_N\setminus \wcH_N$, let $n_k\ge N$ be  minimal such that $(k+p_N\Z)\cap \cH_{n_k}=\emptyset$. Clearly, $n_k$ depends only on the residue of $k$ modulo $p_N$, so the $n_k$ are bounded, say, by $m_N$. Then, for $k\in \cH_N$, $k\in \wcH_N$ if and only if $(k+p_N\Z)\cap \cH_{m_N}\neq\emptyset$. More generally, for every $n\ge m_N$:  $k\in \wcH_N$ if and only if $(k+p_N\Z)\cap \cH_{n}\neq\emptyset$. Hence, for every $n\ge m_N$, $\wcH_N=\cH_{n}+p_N\Z$. It follows that the minimal period $\wq_N$ of $\wcH_N$ divides $\gcd(\tau_n,p_N)$ for $n\ge m_N$.
\end{remark}

\subsection{Consequences of the weak Disjointness condition \eqref{D'}}

Consider any automorphism $F$ of $(X_\eta,\sigma)$. Recall from the introduction that
$\pi\circ F=f\circ\pi$, where $\pi:(X_\eta,\sigma)\to(G,T)$ is the MEF-map and $f:G\to G, y\mapsto y+y_F$ for some $y_F\in G$,
and that $\partial W+y_F\subseteq\bigcup_{k\in\Z}\partial W+\Delta(k)$. Denote
\begin{equation}
V_k=\partial W\cap\left(\partial W+\Delta(k)-y_F\right)\quad(k\in\Z)
\end{equation}
and
\begin{equation}\label{eq:K-def-new}
K=\{k\in\Z: \inn_{\partial W}(V_k)\neq\emptyset\}.
\end{equation}

\begin{proposition}\label{prop:use(D')-new}
Assume that the weak disjointness condition~\eqref{D'} holds.
Let the automorphism $F$ of~$(X_\eta,\sigma)$ be described by a block code $\{0,1\}^{[-m:m]}\to\{0,1\}$. Then
\begin{equation}\label{eq:finite-cover-1-new}
\partial W+y_F\subseteq\bigcup_{k=-m}^m\partial W+\Delta(k).
\end{equation}
\end{proposition}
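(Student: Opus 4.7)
The plan is to exploit the weak disjointness condition~\eqref{D'} via Baire category to single out a dense set of $h\in\partial W$ whose $\pi$-fibre contains two points differing only at position $0$, and then to use the bounded range of the block code defining $F$ to force $h+y_F$ into the finite union on the right-hand side of~\eqref{eq:finite-cover-1-new}.

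First, by~\eqref{D'}, each set $\partial W\cap(\partial W-\Delta(k))$ with $k\in\Z\setminus\{0\}$ is nowhere dense in $\partial W$. Since $\partial W$ is a closed subset of the compact metrizable group $G$, it is a Baire space, so the countable union $\bigcup_{n\neq 0}\bigl(\partial W\cap(\partial W+\Delta(n))\bigr)$ is meager in $\partial W$, and therefore
\[ G_0\;:=\;\partial W\;\setminus\;\bigcup_{n\neq 0}\bigl(\partial W+\Delta(n)\bigr) \]
is comeager, hence dense, in $\partial W$.

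Next I would show that for every $h\in G_0$ there exist $x^+,x^-\in\pi^{-1}(h)$ with $x^+_0=1$, $x^-_0=0$, and $x^+_n=x^-_n$ for all $n\neq 0$. Since $W=\overline{V^1}$ is topologically regular and $\partial W=G\setminus(V^0\cup V^1)$, one has $\partial W\subseteq\overline{V^1}\cap\overline{V^0}$, so $h$ is approached by some sequence $g^+_j\in V^1$ and some sequence $g^-_j\in V^0$; extracting convergent subsequences of $\phi(g^\pm_j)$ in the compact set $X_\eta$ produces $x^\pm\in\pi^{-1}(h)$ with $x^+_0=1$ and $x^-_0=0$. For $n\neq 0$, the definition of $G_0$ gives $h+\Delta(n)\notin\partial W$, so $h+\Delta(n)$ lies in the open set $V^0\cup V^1$ on which $\mathbf{1}_W$ is locally constant, which forces $x^+_n=x^-_n$. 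Now apply $F$: since it is a block code of radius $m$ and $x^+,x^-$ agree outside position $0$, the images $y^\pm:=F(x^\pm)$ can differ only at positions $j\in[-m,m]$; injectivity of $F$ gives $y^+\neq y^-$, so they actually differ at some $j^\ast\in[-m,m]$. Because $y^\pm\in\pi^{-1}(h+y_F)$, this forces $h+y_F+\Delta(j^\ast)\in\partial W$ — otherwise $\mathbf{1}_W$ would be constant in a neighborhood of that point and the $j^\ast$-th coordinate would be rigid on the fibre — whence $h+y_F\in\partial W+\Delta(-j^\ast)\subseteq\bigcup_{|k|\leq m}(\partial W+\Delta(k))$.

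To conclude, translation is a homeomorphism of $G$, so $G_0+y_F$ is dense in $\partial W+y_F$, while $\bigcup_{|k|\leq m}(\partial W+\Delta(k))$ is closed as a finite union of closed sets; the inclusion just established on $G_0+y_F$ therefore extends by closure to all of $\partial W+y_F$, which is exactly~\eqref{eq:finite-cover-1-new}. The main obstacle I anticipate is the middle step: producing the pair $x^+,x^-$ that differs \emph{only} at position $0$ is the heart of the argument, and it is there that~\eqref{D'} does its real work, through the density of $G_0$ delivered by Baire category.
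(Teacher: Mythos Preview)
Your proof is correct and follows essentially the same strategy as the paper: both isolate via \eqref{D'} and Baire category the residual set $G_0$ (the paper's $R$) of $h\in\partial W$ whose $T$-orbit meets $\partial W$ only at $h$, produce two fibre points over such $h$ differing only at position $0$, exploit the block-code radius $m$ to constrain where $F(x^+)$ and $F(x^-)$ can differ, and then pass to the closure. One minor technical point: to ensure $\phi(g^\pm_j)\in X_\eta$ and $\pi(x^\pm)=h$, choose the approximating sequences $g^\pm_j$ from $\Delta(\Z)\cap V^1$ (resp.\ $\Delta(\Z)\cap V^0$); the paper differs only in that it additionally proves the characterization $|\pi^{-1}\{y\}|=2\Leftrightarrow\exists j:\,y+\Delta(j)\in R$, which your more direct construction bypasses.
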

\begin{proof}
Recall from \eqref{eq:partialW-invariance-1-new} that
$\partial W+y_F
\subseteq
\bigcup_{k\in\Z}\partial W+\Delta(k)$.

Let $y\in G$ and recall that $\pi\colon {X_\eta\to G}$ denotes the factor map onto the MEF. At the end of the proof we show
\begin{equation}\label{eq:card=2-new}
\card{\pi^{-1}\{y\}}=2
\quad\Leftrightarrow\quad
\exists j\in\Z:\ y+\Delta(j)\in R:=\partial W\setminus\bigcup_{k\in\Z\setminus\{0\}}\partial W+\Delta(k).
\end{equation}
As $F\colon X_\eta\to X_\eta$ is a bijection that maps $\pi$-fibres to $\pi$-fibres and as $\pi\circ F=f\circ\pi$, it follows that
when $\pi^{-1}\{y\}=\{x_1,x_2\}$ with $x_1\neq x_2$, then $\pi^{-1}\{f(y)\}=\{x_1'=F(x_1),x_2'=F(x_2)\}$, and
there are exactly one index $j\in\Z$ such that $y+\Delta(j)\in\partial W$ and $(x_1)_j\neq (x_2)_j$, and exactly one index $k\in\Z$ such that $f(y)+\Delta(k)\in\partial W$ and $(x_1')_{k}\neq (x_2')_{k}$.
As $F$ is described by a block code $\{0,1\}^{[-m:m]}\to\{0,1\}$, it follows that $|j-k|\leqslant m$. \footnote{This argument is taken from the proof of \cite[Cor.~1]{Bulatek1990}.}

Consider any $y\in R$. Then the index $j$ in \eqref{eq:card=2-new} equals $0$ and
$y+y_F\in \partial W-\Delta(k)$ for some $k$ with $|k|\leqslant m$. In other words:
$f(R)$ is contained in the closed set $\bigcup_{k=-m}^mT^k(\partial W)$.
Because of \eqref{D'}, the set $R$ defined in \eqref{eq:card=2-new} is  residual w.r.t.~the subspace topology of $\partial W$.
Hence
\begin{equation*}
\partial W+y_F=f(\partial W)=f(\overline{R})\subseteq\overline{f(R)}\subseteq
\bigcup_{k=-m}^mT^k(\partial W)
=\bigcup_{k=-m}^m\partial W+\Delta(k).
\end{equation*}

It remains to prove \eqref{eq:card=2-new}.
Recall first that $\card{\pi^{-1}\{y\}}=1$ if and only if $y\in C_\phi$, so that
$\card{\pi^{-1}\{y\}}>1$ if and only if $y\in \bigcup_{k\in\Z}\partial W+\Delta(k)$.
So all we must show is that (A)\; $\card{\pi^{-1}\{y\}}>2$ if and only if (B)\; $y+\Delta(j)\in \partial W\cap (\partial W+\Delta(k))$ for some $j\in\Z$ and $k\in\Z\setminus\{0\}$.

Suppose first that (A) holds, i.e.~that there are at least three different points in $\pi^{-1}\{y\}$. As points in the same $\pi$-fibre can differ only at positions $k$ where $y+\Delta(k)\in\partial W$, there must be at least two such positions, and that is (B).

Conversely, if (B) holds and if $x=\phi(y)$, then $x_j=x_{j-k}=1$.
\begin{compactitem}[-]
\item As
$\partial W\cup (\partial W+\Delta(k))$ is nowhere dense in $G$,
there are arbitrarily small perturbations $y'$
of $y$ such that $y'+\Delta(j)\not\in \partial W\cup (\partial W+\Delta(k))$,
resulting in points $x'=\phi(y')$ with $x'_j=x'_{j-k}=0$.
\item As $\partial W\cap (\partial W+\Delta(k))$ is nowhere dense in $\partial W$ w.r.t.~the subspace topology on $\partial W$ (because of \eqref{D'}),
there are arbitrarily small perturbations $y'$ of $y$ such that $y'+\Delta(j)\in\partial W\setminus (\partial W+\Delta(k))$,
 resulting in $x'=\phi(y')$ with $x'_j=1$ and $x'_{j-k}=0$.
\end{compactitem}
Hence $\card{\pi^{-1}\{y\}}\geqslant 3$, and that is (A).
\end{proof}

\begin{proposition}\label{prop:Baire-new}
We have $K\neq\emptyset$, and
there is an at most countable collection $U_{{N_j}}(\Delta(r_j))$, ${j\in\N}$,  of cylinder sets in $G$ with the following properties:
For each $j\in\N$ there exists some $k_j\in\Z$ such that
\begin{equation}\label{eq:Baire-1-new}
\emptyset\neq\left(U_{{N_j}}(\Delta(r_j))\cap\partial W\right)+(y_F-\Delta(k_j))\subseteq\partial W,
\end{equation}
and
\begin{equation}\label{eq:Baire-2-new}
\bigcup_{j\in\N}U_{{N_j}}(\Delta(r_j))\cap\partial W\quad \text{ is dense in $\partial W$.}
\end{equation}
\end{proposition}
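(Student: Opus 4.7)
The plan is to combine Proposition~\ref{prop:use(D')-new} with a Baire category argument on $\partial W$, and then to trade a general open set for a union of cylinder neighborhoods from a countable base of $G$.

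Fix a sliding block code of radius $m$ representing $F$. Proposition~\ref{prop:use(D')-new} and the definition of $V_k$ give
\[
\partial W \;=\; \bigcup_{k=-m}^{m} V_k,
\]
a finite cover by sets that are relatively closed in $\partial W$ (being intersections of $\partial W$ with the closed sets $\partial W+\Delta(k)-y_F$). Since $\partial W$ is a closed subset of the compact metrizable group $G$, it is a compact Baire space. Hence the finite closed cover forces at least one $V_k$ to have non-empty interior in $\partial W$, which already shows $K\neq\emptyset$.

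The next step is to strengthen this to density. For any non-empty relatively open $\Omega\subseteq\partial W$, the equality $\Omega=\bigcup_{k=-m}^{m}(V_k\cap\Omega)$ displays $\Omega$ as a finite union of sets closed in $\Omega$; since $\Omega$ is itself a Baire space, some $V_k\cap\Omega$ has interior in $\Omega$, which (as $\Omega$ is open in $\partial W$) is open in $\partial W$ and hence lies in $\inn_{\partial W}(V_k)$. So
\[
D:=\bigcup_{k\in K}\inn_{\partial W}(V_k) \text{ is dense in }\partial W.
\]

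Finally, I pass to cylinders. The basic cylinders $U_N(\Delta(r))$ with $N\in\N$ and $r\in[0,p_N)$ form a countable base for the topology of $G$. For each of the finitely many $k\in K$, $\inn_{\partial W}(V_k)$ has the form $O_k\cap\partial W$ with $O_k\subseteq G$ open, and I write $O_k$ as a countable union of basic cylinders meeting $\partial W$. Concatenating these lists over $k\in K$ produces an at most countable family $U_{N_j}(\Delta(r_j))$, each equipped with the associated $k_j\in K$. By construction $\emptyset\neq U_{N_j}(\Delta(r_j))\cap\partial W\subseteq V_{k_j}=\partial W\cap(\partial W+\Delta(k_j)-y_F)$, and translating by $y_F-\Delta(k_j)$ yields \eqref{eq:Baire-1-new}; taking the union over $j$ gives $D$, which is dense by the previous step, proving \eqref{eq:Baire-2-new}.

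The only delicate point is the density of $D$: the bare non-emptiness of $\inn_{\partial W}(V_k)$ for some $k$ is an immediate Baire application, but extending it to density requires running the Baire argument inside every non-empty relatively open piece of $\partial W$. Once this is in hand, the passage to a countable cylinder cover is routine because $G$ is second countable.
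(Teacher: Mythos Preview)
Your argument has one real issue: you invoke Proposition~\ref{prop:use(D')-new} to obtain the \emph{finite} cover $\partial W=\bigcup_{|k|\le m}V_k$, but that proposition assumes the weak disjointness condition~\eqref{D'}, which is \emph{not} a hypothesis of Proposition~\ref{prop:Baire-new}. (Note that within this subsection the results requiring~\eqref{D'} state it explicitly; this one does not.) The paper's proof works directly from the basic inclusion~\eqref{eq:partialW-invariance-1-new}, which holds for every automorphism without any separation hypothesis and yields the \emph{countable} cover $\partial W=\bigcup_{k\in\Z}V_k$. Your local Baire argument survives this change unaltered: a non-empty open subset $\Omega$ of the compact metric space $\partial W$ is locally compact Hausdorff, hence Baire, and a countable cover of $\Omega$ by relatively closed sets must contain one with non-empty interior in $\Omega$. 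So the repair is immediate---drop the appeal to Proposition~\ref{prop:use(D')-new} and start from~\eqref{eq:partialW-invariance-1-new}.

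Apart from this, your route differs slightly from the paper's. The paper argues globally: the set $M:=\bigcup_{k\in\Z}(V_k\setminus\inn_{\partial W}V_k)$ is meagre in $\partial W$, and since $\partial W=M\cup\bigcup_{k\in K}\inn_{\partial W}V_k$, the latter union contains the comeagre set $\partial W\setminus M$ and is therefore dense. You instead run Baire inside every non-empty relatively open $\Omega\subseteq\partial W$ to manufacture an interior point of some $V_k$ there. Both reach the same conclusion; the paper's version is a touch more economical, while yours makes the density mechanism more explicit. The final passage to a countable family of cylinders via second countability of $G$ is essentially identical in the two proofs.
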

\begin{proof}
We start from observation~\eqref{eq:partialW-invariance-1-new}, namely
$\partial W+y_F\subseteq \bigcup_{k\in\Z}\partial W+\Delta(k)$.
This implies
\begin{equation*}
\partial W= \bigcup_{k\in\Z}\partial W\cap\left(\partial W+\Delta(k)-y_F\right)
=\bigcup_{k\in \Z}V_k.
\end{equation*}
Hence $M:=\bigcup_{k\in \Z}V_k\setminus\inn_{\partial W}(V_k)$ is a meagre subset of the compact space $\partial W$ and $\partial W=M\cup\bigcup_{k\in K}\inn_{\partial W}(V_k)$. Now Baire's category theorem implies that $K\neq\emptyset$. As $\partial W$ is separable, there is
an at most countable collection $U_{{N_j}}(\Delta(r_j))$, ${j\in\N}$, of cylinder sets in $G$, for each of which there exists $k_j\in K$ such that $\emptyset\neq\partial W\cap U_{{N_j}}(\Delta(r_j))\subseteq\inn_{\partial W}(V_{k_j})$, and such that
\begin{equation*}
\partial W=M\cup\bigcup_{j\in\N}\left(\partial W\cap U_{{N_j}}(\Delta(r_j))\right).
\end{equation*}
As $M$ is meagre in $\partial W$, these cylinder sets satisfy \eqref{eq:Baire-2-new}, and as
\begin{equation*}
\left(U_{{N_j}}(\Delta(r_j))\cap\partial W\right)
\subseteq
\inn_{\partial W}(V_{k_j})
\subseteq
\partial W+\Delta(k_j)-y_F,
\end{equation*}
also \eqref{eq:Baire-1-new} holds.
\end{proof}

\begin{corollary}\label{coro:new-1-new}
Assume that the weak disjointness condition~\eqref{D'} holds.
Let the automorphism $F$ of~$(X_\eta,\sigma)$ be described by a block code $\{0,1\}^{[-m:m]}\to\{0,1\}$. Then the set $K$ is contained in $[-m,m]$, $\inn_{\partial W}(V_{k_i})\cap\inn_{\partial W}(V_{k_j})=\emptyset$ for any different $k_i,k_j\in K$, and
$\partial W=\bigcup_{k\in K}V_k'$, where $V_k':=\overline{\inn_{\partial W}(V_k)}$.
\end{corollary}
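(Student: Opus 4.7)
The plan is to exploit the residual set $R:=\partial W\setminus\bigcup_{k\in\Z\setminus\{0\}}(\partial W+\Delta(k))$ already appearing in the proof of Proposition~\ref{prop:use(D')-new}. Under~\eqref{eq:D'} the complement $\partial W\setminus R$ is a countable union of sets $\partial W\cap(\partial W+\Delta(k))$, each nowhere dense in $\partial W$, so $R$ is residual in the compact metrizable (hence Baire) space $\partial W$. The crucial observation, which I would extract from the proof of Proposition~\ref{prop:use(D')-new}, is that for every $y\in R$ the fibre $\pi^{-1}\{y\}$ consists of exactly two points differing only at position $0$ (since $y+\Delta(j)\in\partial W$ only for $j=0$). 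Because $F$ is given by a block code of radius $m$, the two points of the image fibre $\pi^{-1}\{y+y_F\}$ must differ at a \emph{unique} index $k\in[-m,m]$, equivalent to $y+y_F+\Delta(k)\in\partial W$ for this one $k$. Translated through $V_{k'}=\partial W\cap(\partial W+\Delta(k')-y_F)$, this says that each $y\in R$ belongs to precisely one $V_{k'}$, and this unique index lies in $[-m,m]$.

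From this observation the first two assertions follow by Baire category. If $|k|>m$ then $V_k\cap R=\emptyset$, so $V_k\subseteq\partial W\setminus R$ is meagre and therefore has empty interior in $\partial W$; hence $k\notin K$, giving $K\subseteq[-m,m]$. Similarly, for distinct $k_i,k_j\in K$, $V_{k_i}\cap V_{k_j}\cap R=\emptyset$, so $V_{k_i}\cap V_{k_j}$ is meagre in $\partial W$ and thus has empty interior there. Since $\inn_{\partial W}(V_{k_i})\cap\inn_{\partial W}(V_{k_j})$ is an open subset of $\partial W$ contained in $V_{k_i}\cap V_{k_j}$, it must be empty.

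For the final identity $\partial W=\bigcup_{k\in K}V_k'$, I would invoke Proposition~\ref{prop:Baire-new}, which supplies cylinders $U_{N_j}(\Delta(r_j))$ with $\emptyset\neq U_{N_j}(\Delta(r_j))\cap\partial W\subseteq\inn_{\partial W}(V_{k_j})$ for some $k_j\in K$, and with $\bigcup_j U_{N_j}(\Delta(r_j))\cap\partial W$ dense in $\partial W$. Hence $\bigcup_{k\in K}\inn_{\partial W}(V_k)$ is dense in $\partial W$; since $K$ is finite by the first claim, the closure of this finite union equals $\bigcup_{k\in K}\overline{\inn_{\partial W}(V_k)}=\bigcup_{k\in K}V_k'$, which is therefore all of $\partial W$. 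The one delicate point I anticipate is the uniqueness of the index $k$ in the first paragraph, which must be read off carefully from the block-code/two-point-fibre argument reproduced in the proof of Proposition~\ref{prop:use(D')-new}; after that, all three claims reduce to routine Baire-category manipulations.
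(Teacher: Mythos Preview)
Your proof is correct and takes a genuinely different route from the paper's.  Both arguments rest on Propositions~\ref{prop:use(D')-new} and~\ref{prop:Baire-new}, and both obtain $\partial W=\bigcup_{k\in K}V_k'$ from the density statement in Proposition~\ref{prop:Baire-new} together with the finiteness of $K$.  The divergence is in how the disjointness of the interiors and the inclusion $K\subseteq[-m,m]$ are proved.  The paper argues directly: given a cylinder $U_N(\Delta(r))\cap\partial W$ inside $\inn_{\partial W}(V_{k_i})\cap\inn_{\partial W}(V_{k_j})$, it translates by $y_F-\Delta(k_i)$ to produce a set $\tilde U\subseteq\partial W\cap(\partial W+\Delta(k_j-k_i))$ and then invokes~\eqref{eq:D'}.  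You instead extract from the proof of Proposition~\ref{prop:use(D')-new} that every $y$ in the residual set $R$ lies in \emph{exactly one} $V_{k'}$ with $|k'|\le m$ (the uniqueness coming from the two-point fibre characterisation~\eqref{eq:card=2-new}, the bound from the block code), and then let Baire category do all the work: $V_k\cap R=\emptyset$ for $|k|>m$ and $V_{k_i}\cap V_{k_j}\cap R=\emptyset$ for $k_i\neq k_j$ force these closed sets to be meagre, hence to have empty interior.  Your approach is more uniform (one mechanism for all three claims) and sidesteps the question of whether the translated set $\tilde U$ is relatively open in $\partial W$, which the paper asserts but which is not entirely obvious since translation by $y_F-\Delta(k_i)$ need not carry $\partial W$ into itself.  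One cosmetic point: with the paper's convention $V_k=\partial W\cap(\partial W+\Delta(k)-y_F)$, the condition $y+y_F+\Delta(k)\in\partial W$ is equivalent to $y\in V_{-k}$ rather than $y\in V_k$; this sign flip is harmless since $[-m,m]$ is symmetric, but it is worth stating the membership in $V_{k'}$ carefully.
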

\begin{proof}
Let $K':=K\cap[-m,m]$, $m$ as in Proposition~\ref{prop:use(D')-new}. Because of that proposition and Proposition~\ref{prop:Baire-new}, $\partial W=\bigcup_{k\in K'}\overline{\inn_{\partial W}(V_k)}$. Suppose there are $k_i,k_j\in K$ such that $\inn_{\partial W}(V_{k_i})\cap \inn_{\partial W}(V_{k_j})\neq\emptyset$. Then there is some cylinder set $U_{N}(\Delta(r))\cap\partial W$ contained in this intersection.
Let
$\tilde{U}:=(U_{N}(\Delta(r))\cap\partial W)+y_F-\Delta(k_i)$.
 Then $\tilde{U}\subseteq\partial W$ and $\tilde{U}+\Delta(k_i-k_j)\subseteq\partial W$, so that $\tilde{U}\subseteq \partial W\cap(\partial W+\Delta(k_j-k_i))$. As $\tilde{U}$ is nonempty and open in the relative topology on $\partial W$, the weak disjointness assumption implies $k_i=k_j$. This also proves that $K=K'\subseteq[-m,m]$.
\end{proof}

For later use we note a further consequence of Proposition~\ref{prop:Baire-new}:

\begin{corollary}\label{coro:new-2-new}
Assume that $\left(\inn_{\partial W}(V_{k_i})+(y_F-\Delta(k_i)\right)\cap \inn_{\partial W}(V_{k_j})\neq\emptyset$ for some $k_i,k_j\in K$. Then there exist $N\in\N$ (which can be chosen arbitrarily large) and $r\in\Z$ such that
\begin{equation}\label{eq:Baire-3-new}
\begin{split}
\emptyset\neq\left(U_{{N}}(\Delta(r))\cap\partial W\right)+(y_F-\Delta(k_i))\subseteq&\partial W\quad\text{and}\\
\emptyset\neq\left(U_{{N}}(\Delta(r))\cap\partial W\right)+(y_F-\Delta(k_i))+(y_F-\Delta(k_j))\subseteq&\partial W.
\end{split}
\end{equation}
\end{corollary}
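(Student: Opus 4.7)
}
The plan is to pick a point $z$ in the assumed nonempty intersection, translate it back to produce a witness $w$ inside $\inn_{\partial W}(V_{k_i})$, use the openness of the two interiors to find cylinder neighbourhoods of $w$ and $z$, intersect them after a translation of one of them by $y_F-\Delta(k_i)$, and finally refine inside the resulting open set to a cylinder of arbitrarily high level.

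More concretely, first choose $z\in\bigl(\inn_{\partial W}(V_{k_i})+(y_F-\Delta(k_i))\bigr)\cap\inn_{\partial W}(V_{k_j})$, and set $w:=z-y_F+\Delta(k_i)$, so that $w\in\inn_{\partial W}(V_{k_i})\subseteq\partial W$ and $z=w+y_F-\Delta(k_i)\in\inn_{\partial W}(V_{k_j})\subseteq\partial W$. Since the sets $\inn_{\partial W}(V_{k_i})$ and $\inn_{\partial W}(V_{k_j})$ are open in the subspace topology of $\partial W$, there are open sets $O_1,O_2\subseteq G$ with $w\in O_1$, $z\in O_2$, $O_1\cap\partial W\subseteq V_{k_i}$ and $O_2\cap\partial W\subseteq V_{k_j}$. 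Because translation by $y_F-\Delta(k_i)$ is a homeomorphism of $G$, the set
\[
O:=O_1\cap\bigl(O_2-y_F+\Delta(k_i)\bigr)
\]
is open in $G$ and contains $w$. The cylinders $U_N(\Delta(r))$, $N\in\N$, containing $w$ form a neighbourhood basis at $w$, so we can pick $N$ as large as we like and a suitable $r\in\Z$ with $w\in U_N(\Delta(r))\subseteq O$.

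It remains to check the two displayed inclusions in \eqref{eq:Baire-3-new}. Nonemptiness follows from $w\in U_N(\Delta(r))\cap\partial W$. For the first inclusion, $U_N(\Delta(r))\cap\partial W\subseteq O_1\cap\partial W\subseteq V_{k_i}$, so adding $y_F-\Delta(k_i)$ on both sides and using the definition of $V_{k_i}$ gives $\bigl(U_N(\Delta(r))\cap\partial W\bigr)+(y_F-\Delta(k_i))\subseteq\partial W$. For the second inclusion, this translated set lies in both $\partial W$ and in $U_N(\Delta(r))+(y_F-\Delta(k_i))\subseteq O_2$, hence in $O_2\cap\partial W\subseteq V_{k_j}$; applying the definition of $V_{k_j}$ yields the second asserted inclusion. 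The only subtle point is that $y_F$ is not in the image of $\Delta$, so $O_2-y_F+\Delta(k_i)$ is not itself a cylinder; the argument relies on the fact that it is nevertheless open in $G$, so that an arbitrarily small cylinder around $w$ can be chosen inside it.
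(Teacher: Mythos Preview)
Your proof is correct. It is close in spirit to the paper's argument but takes a somewhat more direct route. The paper first picks a cylinder $U_{N'}(\Delta(r'))$ whose trace on $\partial W$ lies inside $\inn_{\partial W}(V_{k_i})\cap\bigl(\inn_{\partial W}(V_{k_j})-(y_F-\Delta(k_i))\bigr)$, and then invokes the density part of Proposition~\ref{prop:Baire-new} to locate, inside this cylinder, further cylinders $U_{N_i'}(\Delta(r_i'))$ and (a translate of) $U_{N_j'}(\Delta(r_j'))$ from the countable family produced there; a final refinement yields $U_N(\Delta(r))$. You bypass Proposition~\ref{prop:Baire-new} entirely: by unpacking the definition $V_k=\partial W\cap(\partial W+\Delta(k)-y_F)$ you see immediately that any relatively open subset of $\partial W$ contained in $V_k$ already satisfies the required translation property, so a single cylinder inside $O_1\cap(O_2-y_F+\Delta(k_i))$ suffices. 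The gain is that your argument needs only elementary topology and the definition of $V_k$, whereas the paper's version reuses the Baire-category machinery it has already set up; the paper's route, on the other hand, keeps the whole discussion phrased in terms of the fixed countable family of cylinders from Proposition~\ref{prop:Baire-new}, which is stylistically consistent with the surrounding text.
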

\begin{proof}
Fix some cylinder set $U_{{N'}}(\Delta(r'))$ such that
\begin{equation*}
\emptyset\neq
U_{{N'}}(\Delta(r'))\cap\partial W\subseteq \inn_{\partial W}(V_{k_i})\cap \left(\inn_{\partial W}(V_{k_j})-(y_F-\Delta(k_i))\right).
\end{equation*}
In view of Proposition~\ref{prop:Baire-new} there are $N_i',N_j'\in\N$ and $r_i',r_j'\in\Z$ such that
\begin{equation}\label{eq:intersections-6-new}
U_{{N'}}(\Delta(r'))\cap U_{{N_i'}}(\Delta(r_i'))\cap \left(U_{{N_j'}}(\Delta(r_j'))-(y_F-\Delta(k_i))\right) \cap\partial W\neq\emptyset
\end{equation}
and
\begin{equation}\label{eq:intersections-7-new}
\left(U_{{N_i'}}(\Delta(r_i'))\cap\partial W\right)+(y_F-\Delta(k_i))\subseteq\partial W,\quad
\left(U_{{N_j'}}(\Delta(r_j'))\cap\partial W\right)+(y_F-\Delta(k_j))\subseteq\partial W.
\end{equation}
Because of \eqref{eq:intersections-6-new} and \eqref{eq:intersections-7-new}, the set
$U_{{N'}}(\Delta(r'))\cap U_{{N_i'}}(\Delta(r_i'))\cap \left(U_{{N_j'}}(\Delta(r_j'))-(y_F-\Delta(k_i))\right)$
contains a cylinder set $U_{N}(\Delta(r))$ for which $U_{N}(\Delta(r))\cap\partial W\neq\emptyset$ and
\begin{equation*}
\begin{split}
\left(U_{{N}}(\Delta(r))\cap\partial W\right)+(y_F-\Delta(k_i))\subseteq&\partial W\quad\text{and}\\
\left(U_{{N}}(\Delta(r))\cap\partial W\right)+(y_F-\Delta(k_i))+(y_F-\Delta(k_j))\subseteq&\partial W.
\end{split}
\end{equation*}
Clearly, $N$ can be chosen arbitrarily large.
\end{proof}

\subsection{Consequences of the weak Double Disjointness condition \eqref{DD'}}

The crucial step is now to show that $\inn_{\partial W}(V_k)+(y_F-\Delta(k))\subseteq V_k$ for all $k\in K$ under suitable assumptions.

\begin{proposition}\label{prop:local-invariance-new}
Assume that the weak double disjointness condition~\eqref{DD'} holds.
Let the automorphism $F$ of~$(X_\eta,\sigma)$ be described by a block code $\{0,1\}^{[-m:m]}\to\{0,1\}$.
Then $V_k'+(y_F-\Delta(k))\subseteq V_k'$ for all $k\in K$, the set $K$ is contained in $[-m,m]$, and the sets $V_k'$ have pairwise disjoint interiors.
\end{proposition}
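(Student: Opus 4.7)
First, taking $\beta=0$ in \eqref{DD'} recovers \eqref{D'}, so Corollary~\ref{coro:new-1-new} already delivers the second and third conclusions ($K\subseteq[-m,m]$ and pairwise disjoint interiors of the $V_k'$), together with the covering $\partial W=\bigcup_{k\in K}V_k'$. What remains is the invariance $V_k'+(y_F-\Delta(k))\subseteq V_k'$. Since $V_k=\partial W\cap(\partial W+\Delta(k)-y_F)$ is the intersection of two closed sets in $G$, the set $V_k'$ is closed in $G$; as translation by $\beta_F:=y_F-\Delta(k)$ is a homeomorphism, the desired invariance will follow once I prove the weaker inclusion $\inn_{\partial W}(V_k)+\beta_F\subseteq V_k'$.

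I would argue by contradiction. Suppose some $x_0\in\inn_{\partial W}(V_k)$ satisfies $x_0+\beta_F\notin V_k'$. By the covering above, $x_0+\beta_F\in V_{k_0''}'\setminus V_k'$ for some $k_0''\in K\setminus\{k\}$. Since $\partial W\setminus V_k'$ is open in $\partial W$, continuity of translation by $\beta_F$ together with openness of $\inn_{\partial W}(V_k)$ allows me to pick a cylinder neighborhood $U_N(\Delta(r))$ of $x_0$ with
\[
U_N(\Delta(r))\cap\partial W\subseteq\inn_{\partial W}(V_k)\quad\text{and}\quad(U_N(\Delta(r))\cap\partial W)+\beta_F\subseteq\partial W\setminus V_k'.
\]

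Now I would invoke Baire category. The set $U_N(\Delta(r))\cap\partial W$ is nonempty and open in the compact metrizable (hence Baire) space $\partial W$, and it is covered by the finitely many closed-in-$\partial W$ sets $(V_{k''}'-\beta_F)\cap\partial W$, $k''\in K\setminus\{k\}$. So at least one of these has nonempty interior in $\partial W$, yielding a $k''\in K\setminus\{k\}$ and a cylinder $U_{N'}(\Delta(r'))$ such that
\[
\emptyset\neq U_{N'}(\Delta(r'))\cap\partial W\subseteq (V_{k''}'-\beta_F)\cap\partial W.
\]

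Unpacking $V_{k''}'\subseteq V_{k''}=\partial W\cap(\partial W+\Delta(k'')-y_F)$, this gives simultaneously $(U_{N'}(\Delta(r'))\cap\partial W)+\beta_F\subseteq\partial W$ and $(U_{N'}(\Delta(r'))\cap\partial W)+\beta_F+(y_F-\Delta(k''))\subseteq\partial W$. Since $\beta_F+(y_F-\Delta(k''))=2\beta_F+\Delta(k-k'')$ and $k-k''\neq 0$, this places a nonempty open-in-$\partial W$ subset inside $\partial W\cap(\partial W-\beta_F)\cap(\partial W-2\beta_F-\Delta(k-k''))$, contradicting \eqref{DD'}. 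The delicate point I anticipate is the Baire step combined with the algebra that lines up the two extra translates with the precise pair of shifts $-\beta$ and $-2\beta-\Delta(k)$ appearing in \eqref{DD'}; once that bookkeeping is in place, the contradiction is immediate.
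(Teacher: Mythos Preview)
Your argument is correct and follows essentially the same strategy as the paper's proof: reduce to $\inn_{\partial W}(V_k)+(y_F-\Delta(k))\subseteq V_k'$, assume failure, use Baire to locate a cylinder in $\partial W$ whose $\beta_F$-translate lands in some other $V_{k''}$, and then read off the two inclusions that contradict \eqref{DD'}. The only organizational difference is that the paper packages the Baire step into the previously established Proposition~\ref{prop:Baire-new} and Corollary~\ref{coro:new-2-new} (finding $k_j\neq k_i$ with $(\inn_{\partial W}(V_{k_i})+\beta_F)\cap\inn_{\partial W}(V_{k_j})\neq\emptyset$, then extracting the cylinder), whereas you invoke Baire directly on the finite cover $\partial W\setminus V_k'\subseteq\bigcup_{k''\neq k}V_{k''}'$, which is possible because you already know from Corollary~\ref{coro:new-1-new} that $K$ is finite. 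Your route is marginally more self-contained here; the paper's route reuses lemmas that also serve elsewhere.
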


\begin{proof}
It suffices to prove that
$\inn_{\partial W}(V_k)+(y_F-\Delta(k))\subseteq V_k'$ for all $k\in K$.
Suppose for a contradiction that there is $k_i\in K$ such that
$\inn_{\partial W}(V_{k_i})+(y_F-\Delta(k_i))\not\subseteq V_{k_i}'$.
By Proposition~\ref{prop:Baire-new}, there exists $k_j\in K\setminus\{k_i\}$ such that
$\left(\inn_{\partial W}(V_{k_i})+(y_F-\Delta(k_i))\right)\cap\inn_{\partial W}(V_{k_j})\neq\emptyset$.
So we can apply Corollary~\ref{coro:new-2-new}.
Hence there are $N\in\N$ and $r\in\Z$ such that, setting $\beta=y_F-\Delta(k_i)$,
\begin{equation*}
\left(U_N(\Delta(r))\cap \partial W\right)\subseteq\left(\partial W-\beta\right)\cap\left(\partial W-2\beta-\Delta(k_i-k_j)\right).
\end{equation*}
In view of condition \eqref{DD'}, this implies $k_i=k_j$.
The remaining assertions follow from Corollary~\ref{coro:new-1-new}.
\end{proof}

\begin{remark}\label{remark:Hnk}
Suppose that the conclusions of Proposition~\ref{prop:local-invariance-new} are satisfied (not necessarily condition~\eqref{DD'}).
\begin{compactenum}[a)]
\item For $k\in K$ let $\wcH_n^k:=\{j\in\wcH_n: U_n(\Delta(j))\cap V_k'\neq\emptyset\}$. Then $\wcH_n=\bigcup_{k\in K}\wcH_n^k$ by Lemma~\ref{lemma:newG-2} and Corollary~\ref{coro:new-1-new}. (But observe that this need not be a disjoint union, in general!)
\item $j\in\wcH_n^k$ if and only if $j+p_n\in\wcH_n^k$, i.e. all $\wcH_n^k$ are $p_n$-periodic.
\item For each $n>0$ and $k\in K$ we have $\wcH_n^k+\gcd((y_F)_n-k,p_n)\Z\subseteq\wcH_n^k$.
\begin{proof}
For each $j\in\wcH_n^k$ there exists $h\in U_n(\Delta(j))\cap V_k'$. By assumption, $h+(y_F-\Delta(k))\Z\subseteq V_k'$, so that, for all $t\in\Z$,
\begin{equation*}
U_n\left(\Delta(j+((y_F)_n-k)t\right)\cap V_k'=
U_n\left(\Delta(j)+(y_F-\Delta(k))t\right)\cap V_k'\neq\emptyset,
\end{equation*}
i.e. $j+((y_F)_n-k)\Z\subseteq\wcH_n^k$.
\end{proof}
\end{compactenum}
\end{remark}

\begin{remark}\label{claim}
Remark~\ref{remark:Hnk}c)
can be used to show that $\card K=1$ and hence $\partial W+ (y_F-\Delta(k))\subseteq\partial W$, whenever $\ddelta(\wcH_n)=o\left(\frac{1}{\sqrt p_n}\right)$.
This allows to apply Theorem~\ref{theo:basic-case}, which imposes restrictions on $y_F$ in terms of the minimal periods $\wq_n$ of the sets $\wcH_n$.
(But that is quite far from what holds in the $\cB$-free setting.)\\
{\em Proof} of the claim: Let $k,k'\in K$ and denote $\beta=y_F-\Delta(k)$ and $\beta'=y_F-\Delta(k')$. Suppose for a contradiction that $k\neq k'$. Then
\begin{equation*}
\card{\wcH_n^k\cap[0,p_n)}\ge \card{\langle\gcd(\beta_n,p_n)\rangle_{p_n}}\;\text{ and }\;
\card{\wcH_n^{k'}\cap[0,p_n)}\ge \card{\langle\gcd(\beta_n',p_n)\rangle_{p_n}},
\end{equation*}
where $\langle s\rangle{p_n}$ denotes the subgroup generated by $s$ in $\Z/p_n\Z$,
so that
\begin{equation*}
\card{\wcH_n\cap[0,p_n)}^2\ge \card{\wcH_n^{k}\cap[0,p_n)}\cdot\card{\wcH_n^{k'}\cap[0,p_n)}
\ge
\card{\langle \left(\gcd(\beta_n,p_n),\gcd(\beta_n',p_n)\right)\rangle_{p_n\times p_n}},
\end{equation*}
where $\langle (s,t)\rangle{p_n\times p_n}$ denotes the subgroup generated by $(s,t)$ in $(\Z/p_n\Z)^2$. Hence
\begin{equation*}
\begin{split}
\card{\wcH_n\cap[0,p_n)}^2
&\ge
\lcm\left(\frac{p_n}{\gcd(\beta_n,p_n)},\frac{p_n}{\gcd(\beta_n',p_n)}\right)
=\frac{p_n}{\gcd(\beta_n,\beta_n',p_n)}\\
&\ge \frac{p_n}{\gcd(\beta_n-\beta_n',p_n)}
=
\frac{p_n}{\gcd(k-k',p_n)}.
\end{split}
\end{equation*}
But the last denominator is at most $2m$, so
\begin{equation*}
\frac{1}{2m\cdot p_n}\le \left(\frac{\card{\wcH_n\cap[0,p_n)}}{p_n}\right)^2
=\ddelta(\wcH_n)^2
\end{equation*}
in contradiction to the assumption.
\end{remark}
Here is an example of a Toeplitz sequence for which Remark~\ref{claim} applies and for which $\tau_n$ and~$\wq_n$, the smallest periods of $\cH_n$ and $\wcH_n$, are different.
\begin{example}\label{ex:GarciaHedlung}
Garcia and Hedlund \cite{Garcia1948} gave the first example of $0$-$1$ nonperiodic Toeplitz sequence. At each level $n$ of their construction there is exactly one hole in each interval of length $p_n$, so all holes are essential, and the centralizer is trivial because condition (*) is satisfied, see Remark~\ref{remark:condition(*)}.
Our example is a modification of this construction: \footnote{To be precise, the example from \cite{Garcia1948} is not really a Toeplitz sequence, because it is not periodic at position $0$. But its orbit closure is minimal and contains many Toeplitz sequences. Our modification takes this into account.}

Let $r_n:=\sum_{j=0}^{n-1}2^{2j}=\frac{2^{2n}-1}{3}$. Define a Toeplitz sequence in such a way that
\begin{equation*}
\cH_n=2^{2n}\Z-r_n.
\end{equation*}
Observe that $\cH_{n}=2^{2(n-1)}(4\Z-1)-r_{n-1}\subseteq\cH_{n-1}$, in particular $\bigcap_{n\ge 1}\cH_n=\emptyset$.
$\cH_{n-1}\setminus\cH_{n}$ is the disjoint union of the residue classes
$2^{2(n-1)}(4\Z-k)-r_{n-1}$, $k\in\{0,2,3\}$, and the positions in each of these residue classes should be filled alternatingly with $0$ and $1$. Then all these positions have minimal period $2^{2n+1}$, and  $(p_n)_{n\ge 1}=(2^{2n+1})_{n\ge 1}$
is a period structure for the resulting Toeplitz sequence. 

If $2^{2N}t-r_N\in\cH_N$ and $n>N$, then
\begin{equation*}
\begin{split}
(2^{2N}t-r_N+p_N\Z)\cap\cH_n
&=
\left((2^{2N}t+(r_n-r_N)+p_N\Z)\cap 2^{2n}\Z\right)-r_n\\
&=
2^{2N}\left((t+\frac{2^{2(n-N)}-1}{3}+2\Z)\cap 2^{2(n-N)}\Z\right)-r_n
\end{split}
\end{equation*}
is non-empty if and only if $t$ is odd. Hence $\wcH_N=2^{2N}(2\Z+1)-r_N$ and $\wq_N=2^{2N+1}=p_N=2\tau_N$.
Notice that each interval of length $p_n$ contains exactly two holes and the distance between them is~$\frac{p_n}{2}$. But $p_{n+1}=4p_n$, so condition (*) is not satisfied. Nevertheless the centralizer is trivial by Theorem~\ref{theo:basic-case} and Remark~\ref{claim}.
\end{example}
\subsection{Additional arithmetic structure (motivated by the $\cB$-free case)}
\label{subsec:additional-structure}

Throughout this subsection $F$ is again an automorphism of $(X_\eta,\sigma)$ and $\pi(F(x))=\pi(x)+y_F$ for $x\in X_\eta$.
We start with a particularly simple situation based on the following (very strong) trivial intersection property: There are $A_n\subseteq\N$, $n\in\N$ such that
\begin{equation}\label{eq:trivial-intersection-new}\tag{TI}
\bigcap_{n\in\N}\gen{A_n}=\{0\}\text{ and }\wcH_n\subseteq\cM_{A_n},\; n\in\N.
\end{equation}
We will check this property for some nontrivial $\cB$-free examples, see Examples~\ref{ex:power:identity}, \ref{ex:not_all_holes_new} and~\ref{ex:holes} and also Subsection \ref{subsec:non-trivial}.
\begin{proposition}\label{prop:invariance-new}
Suppose \eqref{eq:trivial-intersection-new} is satisfied.
\begin{compactenum}[a)]
\item  Then the disjointness condition \eqref{D} holds, and there exists a unique $k\in\Z$ such that
$\partial W+y_F\subseteq \partial W+\Delta(k)$.
\item $\wq_n\mid(y_F)_n-k$, where $\wq_n$ is the minimal period of $\wcH_n$. In particular, if infinitely many $\wcH_n$ have minimal period $p_n$ then $y_F=\Delta(k)$.
\end{compactenum}
\end{proposition}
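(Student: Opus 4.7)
The strategy is to exploit the trivial intersection property \eqref{eq:trivial-intersection-new} through one elementary observation: if two integers $x$ and $x+\ell$ both lie in $\cM_{A_n}$, then there exist $a,b\in A_n$ with $a\mid x$ and $b\mid x+\ell$, whence $\ell=(x+\ell)-x\in a\Z+b\Z\subseteq\gen{A_n}$. Combined with $\bigcap_n\gen{A_n}=\{0\}$, this forces $\ell=0$ whenever the hypothesis holds at every level. Everything else in the proof is bookkeeping between $\Z$ and $\Z/p_n\Z$, smoothed by the $p_n$-periodicity of $\wcH_n$.

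For part a), I first verify \eqref{eq:D}. Fix $k\in\Z\setminus\{0\}$ and suppose, towards a contradiction, that $h\in\partial W\cap(\partial W-\Delta(k))$. By Lemma~\ref{lemma:newG-2}b), $h_n$ and $h_n+k$ both lie in $\wcH_n$ for every $n$. The $p_n$-periodicity of $\wcH_n$ lets me pick a single integer representative $\tilde h$ of $h_n$ with $\tilde h,\tilde h+k\in\wcH_n\subseteq\cM_{A_n}$, and the observation gives $k\in\gen{A_n}$ for every $n$, hence $k\in\bigcap_n\gen{A_n}=\{0\}$, a contradiction. Next, \eqref{eq:partialW-invariance-1-new} provides $\partial W+y_F\subseteq\bigcup_{k\in\Z}(\partial W+\Delta(k))$, and under \eqref{eq:D} these translates are pairwise disjoint. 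Thus each $h\in\partial W$ determines a unique $k(h)\in\Z$ with $h+y_F-\Delta(k(h))\in\partial W$. Applying the observation to the pair $h,\,h+y_F-\Delta(k(h))\in\partial W$ at level $n$, with a fixed integer representative $\tilde y_n$ of $(y_F)_n$, yields $\tilde y_n-k(h)\in\gen{A_n}$ for every $n$. Comparing two base points $h^{(1)}, h^{(2)}$ and subtracting the resulting relations level-by-level produces $k(h^{(1)})-k(h^{(2)})\in\bigcap_n\gen{A_n}=\{0\}$, so $k(h)$ is a constant $k\in\Z$; thus $\partial W+y_F\subseteq\partial W+\Delta(k)$, and uniqueness of $k$ follows immediately from \eqref{eq:D}.

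For part b), I set $\beta:=y_F-\Delta(k)$, which satisfies $\partial W+\beta\subseteq\partial W$ by part a). Lemma~\ref{lemma:basic-new}b) then yields $\wq_n\mid\beta_n=(y_F)_n-k$ for every $n$. If $\wq_n=p_n$ for infinitely many $n$, then for every $m$ there is some $n\ge m$ with $(y_F)_n\equiv k\pmod{p_n}$; projecting to level $m$ (using $p_m\mid p_n$) yields $(y_F)_m\equiv k\pmod{p_m}$ for every $m$, so $y_F=\Delta(k)$. The only point that calls for care throughout the argument is the interplay between $\cM_{A_n}\subseteq\Z$ and the class $h_n\in\Z/p_n\Z$; this is resolved by invoking the $p_n$-periodicity of $\wcH_n$ to pick a common integer representative in which both divisibility witnesses from $A_n$ can be read off simultaneously. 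After that, the proof is purely arithmetic.
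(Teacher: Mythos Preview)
Your proof is correct and follows essentially the same line as the paper's: both arguments reduce \eqref{eq:D} and the uniqueness of $k$ to the observation that differences of elements of $\wcH_n\subseteq\cM_{A_n}$ lie in $\gen{A_n}$, and then invoke $\bigcap_n\gen{A_n}=\{0\}$; part b) is likewise handled in both by a direct appeal to Lemma~\ref{lemma:basic-new}b). The only cosmetic differences are that the paper cites Lemma~\ref{lemma:newG-2}a) rather than b) and compares two arbitrary pairs $(y_i,k_i)$ directly instead of first defining the function $h\mapsto k(h)$, but the arithmetic content is identical.
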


\begin{proof}a)\;
If there is some $y\in\partial W\cap(\partial W-\Delta(k))$, then
$U_{N}(\Delta(y_N))\cap\partial W=U_{N}(y)\cap\partial W\neq\emptyset$ and
$U_{N}(\Delta(k+y_N))\cap\partial W=U_{N}(\Delta(k)+y)\cap\partial W\neq\emptyset$,
so that $k=(k+y_N)-y_N\in\wcH_N-\wcH_N\subseteq\cM_{A_N}-\cM_{A_N}\subseteq\gen{\cM_{A_N}}=\gen{A_{N}}$ for all $N>0$ by Lemma~\ref{lemma:newG-2}a). Hence $k=0$ in view of \eqref{eq:trivial-intersection-new}. If there are $y_1,y_2\in \partial W$ and $k_1,k_2\in\Z$ such that $y_i+y_F\in\partial W-\Delta(k_i)$ ($i=1,2$), then
$U_N(\Delta(k_i+(y_i+y_F)_N))\cap\partial W\neq\emptyset$ ($i=1,2$), so that
$k_2-k_1\in \gen{A_N}$ for all $N>0$ as before. Hence $k_2=k_1$
because of \eqref{eq:trivial-intersection-new}.\\
b)\;This is Lemma~\ref{lemma:basic-new}b).
\end{proof}

Together with Theorem~\ref{theo:basic-case}, this proposition yields:
\begin{corollary}\label{coro:simple-case}
Suppose that \eqref{eq:trivial-intersection-new} is satisfied.
If $M:=\liminf_{n\to\infty}p_n/\wq_n<\infty$, then \[\Aut_\sigma(X_\eta)=\langle \sigma\rangle\oplus\Tor,\] where $\Tor$ denotes the torsion group of $\Aut_\sigma(X_\eta)$. It is a cyclic group  (possibly trivial), whose order divides $M$. In particular, if infinitely many $\wcH_n$ have minimal period $p_n$, then the centralizer of $(X_\eta,\sigma)$ is trivial.
\end{corollary}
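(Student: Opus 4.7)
The proof is essentially an assembly: Proposition~\ref{prop:invariance-new} produces precisely the hypothesis needed to apply Theorem~\ref{theo:basic-case}(b), and the latter then yields the conclusion. So the plan is to chain these two results.

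First I would fix an arbitrary $F\in\Aut_\sigma(X_\eta)$ and invoke Proposition~\ref{prop:invariance-new}(a) under the standing assumption \eqref{eq:trivial-intersection-new}: this produces a unique integer $k_F\in\Z$ such that $\partial W+y_F\subseteq\partial W+\Delta(k_F)$, which is exactly the uniqueness condition \eqref{eq:unique_k} required in Theorem~\ref{theo:basic-case}(b). Since $M=\liminf_{n\to\infty}p_n/\wq_n<\infty$ by assumption, Theorem~\ref{theo:basic-case}(b) immediately gives $\Aut_\sigma(X_\eta)=\langle\sigma\rangle\oplus\Tor$ with $\Tor$ cyclic and $|\Tor|\mid M$.

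For the final sentence I would argue in two equivalent ways. Either directly: if infinitely many $\wcH_n$ have minimal period $p_n$, then $\wq_n=p_n$ for infinitely many $n$, hence $M=\liminf p_n/\wq_n=1$; since $|\Tor|\mid M=1$, $\Tor$ is trivial and $\Aut_\sigma(X_\eta)=\langle\sigma\rangle$. Alternatively, for each $F\in\Aut_\sigma(X_\eta)$, Proposition~\ref{prop:invariance-new}(b) gives $\wq_n\mid (y_F)_n-k_F$ for all $n$, which with the hypothesis forces $(y_F)_n=k_F\bmod p_n$ infinitely often and hence $y_F=\Delta(k_F)$; by \cite[Lemma~2.4]{Donoso2015} this means $F=\sigma^{k_F}$.

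There is essentially no real obstacle here — the two inputs have been engineered to fit together, so the only work is to check that the hypothesis \eqref{eq:unique_k} of Theorem~\ref{theo:basic-case}(b) is literally what Proposition~\ref{prop:invariance-new}(a) delivers, and then to observe that in the ``in particular'' case the divisibility $|\Tor|\mid M$ degenerates to $|\Tor|\mid 1$.
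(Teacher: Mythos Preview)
Your proposal is correct and matches the paper's own argument: the corollary is stated immediately after Proposition~\ref{prop:invariance-new} with the one-line justification ``Together with Theorem~\ref{theo:basic-case}, this proposition yields,'' which is exactly the chaining you describe. Both of your alternatives for the final sentence are valid, and the first (observing $M=1$) is the more direct reading of the corollary as stated.
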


If \eqref{eq:trivial-intersection-new} is not satisfied, as it is the case for more complex $\cB$-free examples, we need additional tools to verify assumption~\eqref{eq:unique_k} of Theorem~\ref{theo:basic-case}. The weak double disjointness condition~\eqref{DD'}
turns out to be instrumental along this way, hence its verification under mild arithmetic assumptions in Proposition~\ref{prop:local-invariance-new-2} will be an important step.

We continue with some arithmetic preparations.
For the sake of brevity we sometimes write $u\vee v$ instead of $\lcm(u,v)$.
The following notation will be used repeatedly for positive integers $a$ and $k$:
\begin{equation}
a^{\div k}:=\frac{a}{\gcd(a,k)}=\frac{a\vee k}{k}.
\end{equation}
For $A\subseteq \N$ and $k\in\N$ denote
\begin{equation}
A^{\div k}
:=
\left\{a^{\div k}: a\in A\right\},
\end{equation}
\begin{equation}\label{eq:intersections-3}
A^{\perp k}:=\left\{a\in A:\gcd\left(a,k\right)=1\right\},
\end{equation}
and
\begin{equation}
A^{prim}:=\{a\in A:\ a'\mid a\Rightarrow a'=a \text{\;\; for all }a'\in A\}.
\end{equation}
If $A=A^{prim}$ then $A$ is called \emph{primitive}.
\begin{remark}\label{remark:notations}
Let $r,\ell,s,m\in\Z$ and assume that $\gcd(m,\ell)\mid s-r$. Then
\begin{equation}\label{eq:intersections-1}
(r+\ell\Z)\cap(s+m\Z)
=
x+(\ell\vee m)\Z
=
g\cdot\left(\tilde{r}+\tilde{\ell}\Z\right),
\end{equation}
where $x\in\{0,\dots,(\ell\vee m)-1\}$ is defined uniquely by the first identity,  $g=\gcd(x,\ell\vee m)$, $\tilde{r}=\frac{x}{g}$, and $\tilde{\ell}=\frac{\ell\vee m}{g}$. Observe that $\gcd(\tilde{r},\tilde{\ell})=1$.
This formula will be applied in several settings, so that one should keep in mind that $g,\tilde{r}$, and $\tilde{\ell}$ depend on $r,\ell,s$, and $m$.
For later use observe also that
\begin{equation}\label{eq:g}
g=\gcd(r,\ell)\vee\gcd(s,m).
\end{equation}
Here is the proof: As $x-r\in\ell\Z$ and $x-s\in m\Z$, we have
$\gcd(x,\ell)=\gcd(r,\ell)$ and
$\gcd(x,m)=\gcd(s,m)$. Hence
\begin{equation*}
g=\gcd(x,\ell\vee m)=\gcd(x,\ell)\vee\gcd(x,m)=\gcd(r,\ell)\vee\gcd(s,m).
\end{equation*}

We list some further consequences:
\begin{equation}\label{eq:intersections-2}
\lcm(A^{\div g})=\lcm(A)^{\div g},\quad
g\cdot\cM_{A^{\div g}}=\cM_A\cap g\Z,\text{\; and \;}
\cM_{A^{\div g}}\subseteq\cM_{A^{\div\ell\vee m}},
\end{equation}
where we used $\gcd(a,g)\mid\gcd(a,\ell\vee m)$ for the last inclusion.
For each subset $Z\subseteq\Z$
holds
\begin{equation}\label{eq:intersections-4}
\left(\tilde{r}+\tilde{\ell}Z\right)\cap \cM_{A}
=
\left(\tilde{r}+\tilde{\ell}Z\right)\cap \cM_{A^{\perp{\tilde{\ell}}}},
\end{equation}
because $\gcd(\tilde{r},\tilde{\ell})=1$.\footnote{Indeed, if $a\in A$, $z\in Z$ and $x=\tilde{r}+z\tilde{\ell}\in a\Z$, then $\gcd(a,\tilde{\ell})\mid\tilde{r}$, so that
$\gcd(a,\tilde{\ell})\mid\gcd(\tilde{r},\tilde{\ell})=1$, i.e.~$a\in A^{\perp{\tilde{\ell}}}$.} Combining \eqref{eq:intersections-1}, \eqref{eq:intersections-2} and \eqref{eq:intersections-4} yields
\begin{equation}\label{eq:intersections-5}
(r+\ell\Z)\cap(s+a\Z)\cap\cM_A
=
g\cdot\left((\tilde{r}+\tilde{\ell}\Z)\cap\cM_{(A^{\div g})^{\perp{\tilde{\ell}}}}\right).
\end{equation}
\end{remark}
Given a set $A\subseteq \Z$ we denote by $\ddelta(A):=\lim_{N\to\infty}\frac{1}{\log N}\sum_{k=1}^N\frac{1}{k}1_A(k)$ the logarithmic density
 of $A$ (provided the limit exists).

\begin{lemma}\label{lemma:intersections-density}
Let $r,\ell,s,m\in\Z$ and assume that $\gcd(m,\ell)\mid s-r$.
Recall that $\tilde{\ell}=\frac{\ell\vee m}{g}$.
Then
\begin{equation}\label{eq:intersections-density-1}
\ddelta\left((r+\ell\Z)\cap(s+m\Z)\cap\cM_A\right)
=
\frac{1}{\ell\vee m}\cdot\ddelta\left(\cM_{(A^{\div g})^{\perp{\tilde{\ell}}}}\right)
\leqslant
\frac{1}{\ell\vee m}\cdot\ddelta\left(\cM_{A^{\ell\vee m}}\right),
\end{equation}
\begin{equation}\label{eq:intersections-density-2}
\ddelta\left((r+\ell\Z)\cap(s+m\Z)\setminus\cM_A\right)
=
\frac{1}{\ell\vee m}\cdot\left(1-\ddelta\left(\cM_{(A^{\div g})^{\perp{\tilde{\ell}}}}\right)\right).
\end{equation}
\end{lemma}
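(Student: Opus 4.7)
The plan is to start from the explicit decomposition \eqref{eq:intersections-5} of the preceding Remark and reduce the density computation to two elementary facts about logarithmic density.

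First I would apply \eqref{eq:intersections-5} (with $a$ replaced by $m$) to obtain
\[
(r+\ell\Z)\cap(s+m\Z)\cap\cM_A \;=\; g\cdot X,\qquad X:=(\tilde r+\tilde\ell\Z)\cap\cM_B,\qquad B:=(A^{\div g})^{\perp\tilde\ell}.
\]
Scale-invariance of logarithmic density up to a factor, namely $\ddelta(cY)=\frac{1}{c}\ddelta(Y)$ for all $c\in\N$ and $Y\subseteq\N$, follows straight from the definition (because $\log(N/c)\sim\log N$ and $cy\le N\Leftrightarrow y\le N/c$). This immediately yields $\ddelta((r+\ell\Z)\cap(s+m\Z)\cap\cM_A)=\frac{1}{g}\,\ddelta(X)$.

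The second (and main) ingredient is the identity
\[
\ddelta\bigl(\cM_{B'}\cap(\tilde r+M\Z)\bigr)=\frac{1}{M}\,\ddelta(\cM_{B'})
\]
valid whenever every element of $B'\subseteq\N$ is coprime to $M$, and for any residue class $\tilde r+M\Z$. For a finite $B'$ this is proved by inclusion--exclusion: for every nonempty $S\subseteq B'$ the number $\lcm(S)$ is coprime to $M$, so the intersection $\lcm(S)\Z\cap(\tilde r+M\Z)$ is a single arithmetic progression of common difference $\lcm(S)\cdot M$, and hence of density $\frac{1}{\lcm(S)\,M}$, uniformly in $\tilde r$. Summing the inclusion--exclusion expansion gives the claim for finite $B'$. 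The extension from finite $B'$ to the possibly infinite $B$ is handled via the Davenport--Erd\H{o}s theorem: the logarithmic density $\ddelta(\cM_B)$ exists and is approximated by $\ddelta(\cM_{B'})$ for an exhausting sequence of finite $B'\subseteq B$, and the same approximation applies to the intersection with the AP. Specialising to $M=\tilde\ell$ gives $\ddelta(X)=\frac{1}{\tilde\ell}\,\ddelta(\cM_B)$, and combined with $g\tilde\ell=\ell\vee m$ this produces the equality in \eqref{eq:intersections-density-1}.

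The inequality in \eqref{eq:intersections-density-1} follows from the chain of inclusions $\cM_{(A^{\div g})^{\perp\tilde\ell}}\subseteq\cM_{A^{\div g}}\subseteq\cM_{A^{\div(\ell\vee m)}}$, the second inclusion being the one in \eqref{eq:intersections-2} and depending on $g\mid\ell\vee m$, which in turn is a consequence of formula \eqref{eq:g}. Finally, \eqref{eq:intersections-density-2} drops out by taking the complement inside the AP $(r+\ell\Z)\cap(s+m\Z)=x+(\ell\vee m)\Z$, whose density is $\frac{1}{\ell\vee m}$. The main technical obstacle is the clean passage from finite to infinite $B$ in the density identity; everything else is straightforward accounting given the structural formula \eqref{eq:intersections-5}.
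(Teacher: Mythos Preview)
Your proof is correct and follows essentially the same route as the paper: the paper uses the structural decomposition \eqref{eq:intersections-1}/\eqref{eq:intersections-5}, invokes \cite[Lemma~1.17]{Hall1996} for the identity $\ddelta\bigl((\tilde r+\tilde\ell\Z)\cap\cM_B\bigr)=\frac{1}{\tilde\ell}\ddelta(\cM_B)$ when all elements of $B$ are coprime to $\tilde\ell$, and then derives the inequality from the inclusions in \eqref{eq:intersections-2} and \eqref{eq:intersections-density-2} by complementation. The only difference is that you sketch a proof of Hall's lemma (inclusion--exclusion plus Davenport--Erd\H{o}s) rather than citing it.
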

\begin{proof}
As in \cite[Lemma~1.17]{Hall1996} we have
\begin{equation*}
\ddelta\left((\tilde{r}+\tilde{\ell}\Z)\cap\cM_{(A^{\div g})^{\perp{\tilde{\ell}}}}\right)
=\frac{1}{\tilde{\ell}}\cdot\ddelta\left(\cM_{(A^{\div g})^{\perp{\tilde{\ell}}}}\right).
\end{equation*}
As $g\cdot\tilde{\ell}=\ell\vee m$, this together with (\ref{eq:intersections-1}) proves the identity in \eqref{eq:intersections-density-1}. As $\ddelta\left((r+\ell\Z)\cap(s+m\Z)\right)=\frac{1}{\ell\vee m}$, \eqref{eq:intersections-density-2} follows at once. For the inequality in \eqref{eq:intersections-density-1} observe that
$\cM_{(A^{\div g})^{\perp{\tilde{\ell}}}}\subseteq\cM_{A^{\div g}}\subseteq\cM_{A^{\div\ell\vee m}}$ by~\eqref{eq:intersections-2}.
\end{proof}
\begin{lemma}\label{finite_mult}
Assume that $(r+\ell\Z)\cap(s+m\Z)\cap[N,\infty)\subseteq \cM_C$ for some $r,\ell,s,m\in\Z$ satisfying $\gcd(m,\ell)\mid s-r$, some $N\in \N$  and a finite set $C\subset \N$. Then $c\mid\gcd(r,\ell)\vee\gcd(s,m)$ for some $c\in C$.
\end{lemma}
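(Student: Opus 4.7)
The plan is to rewrite the intersection using the arithmetic from Remark~\ref{remark:notations}, and then exploit Dirichlet's theorem on primes in arithmetic progressions to produce an element of the intersection which, thanks to primality, isolates the factor $g$.

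First, since the hypothesis $\gcd(m,\ell)\mid s-r$ is exactly the nonemptiness condition for \eqref{eq:intersections-1}, I apply that identity together with \eqref{eq:g} to rewrite
\begin{equation*}
(r+\ell\Z)\cap(s+m\Z) = g\cdot(\tilde{r}+\tilde{\ell}\Z),
\end{equation*}
where $g=\gcd(r,\ell)\vee\gcd(s,m)$, $\tilde{\ell}=(\ell\vee m)/g$, and crucially $\gcd(\tilde{r},\tilde{\ell})=1$. Replacing $\tilde{r}$ by its positive representative mod $\tilde{\ell}$ if needed, I may assume $\tilde{r}\ge 1$ (this leaves the set $g\cdot(\tilde{r}+\tilde{\ell}\Z)$ unchanged).

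Second, since $\gcd(\tilde{r},\tilde{\ell})=1$, Dirichlet's theorem on primes in arithmetic progressions yields infinitely many primes $p$ of the form $p=\tilde{r}+n\tilde{\ell}$ with $n\ge 0$. I pick such a prime $p$ satisfying $p>\max C$ and $gp\ge N$; both are achievable simultaneously because infinitely many such $p$ exist.

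Third, with this choice, $gp\in g\cdot(\tilde{r}+\tilde{\ell}\Z)\cap[N,\infty)=(r+\ell\Z)\cap(s+m\Z)\cap[N,\infty)\subseteq\cM_C$, so some $c\in C$ divides $gp$. Because $p$ is prime and $1\le c<p$, we have $\gcd(c,p)=1$, and therefore $c\mid g=\gcd(r,\ell)\vee\gcd(s,m)$, which is the desired conclusion.

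The only potential obstacle is the degenerate case $\tilde{\ell}=1$, but there Dirichlet reduces to the existence of infinitely many primes, so the same argument goes through unchanged. The whole proof is short; the key insight is simply that primality of $p$ forces any divisor $c<p$ of $gp$ to already divide $g$, which is precisely what \eqref{eq:intersections-1} and \eqref{eq:g} have set up for us.
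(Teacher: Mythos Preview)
Your proof is correct. Both your argument and the paper's begin identically, using \eqref{eq:intersections-1} and \eqref{eq:g} to rewrite the progression as $g\cdot(\tilde r+\tilde\ell\Z)$ with $\gcd(\tilde r,\tilde\ell)=1$ and $g=\gcd(r,\ell)\vee\gcd(s,m)$. The divergence is in the endgame: the paper passes through \eqref{eq:intersections-5} to obtain $(\tilde r+\tilde\ell\Z)\cap[N',\infty)\subseteq\cM_{A}$ with $A=(C^{\div g})^{\perp\tilde\ell}$, and then quotes \cite[Prop.~4.31]{BKKL2015} (a tautness result) to force $1\in A$, hence $c\mid g$ for some $c\in C$. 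You instead invoke Dirichlet's theorem to manufacture a prime $p\equiv\tilde r\pmod{\tilde\ell}$ with $p>\max C$ and $gp\ge N$; then $c\mid gp$ together with $c<p$ forces $c\mid g$ directly.

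Your route avoids the external reference and the $\cB$-free machinery, at the cost of importing a nontrivial analytic theorem. In fact an even lighter tool suffices for this finite-$C$ situation: once you know $(\tilde r+\tilde\ell\Z)\subseteq\cM_{A}$ with $A$ finite and every $a\in A$ coprime to $\tilde\ell$, the Chinese Remainder Theorem alone produces $x\equiv\tilde r\pmod{\tilde\ell}$ and $x\equiv 1\pmod{\lcm A}$, which lies outside $\cM_A$ unless $1\in A$. So Dirichlet is overkill here, though your argument is perfectly valid and pleasantly transparent.
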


\begin{proof}
Let $A:=(C^{\div g})^{\perp\tilde \ell}$.
In view of~\eqref{eq:intersections-5}, our assumption implies
$(\tilde{r}+\tilde{\ell}\Z)\cap[N,\infty)\subseteq\cM_{A}=\cM_{A^{prim}}$.
As $A^{prim}$ is taut,
Proposition~4.31 in \cite{BKKL2015} shows that there is $a\in A^{prim}$ such that $a\mid\gcd(\tilde{r},\tilde{\ell})=1$, i.e. $1\in A$. Hence there is $c\in C$ such that $c\mid g=\gcd(r,\ell)\vee\gcd(s,m)$.
\end{proof}
We will need a more detailed arithmetic characterization of the inclusion from Lemma~\ref{finite_mult} when $C$ is a singleton and $s=0$.

\begin{lemma}\label{arith_char_incl}
Let $r,\ell,a,c\in\Z$ satisfying $\gcd(a,\ell)\mid r$. Then the following conditions are equivalent:
\begin{compactenum}[a)]
\item $(r+\ell\Z)\cap a\Z\subseteq c\Z$,
\item $c\mid \gcd(r,\ell)\vee a$,
\item $c\mid \ell\vee a$ and $\gcd(c,\ell)\mid r$.
\end{compactenum}
\end{lemma}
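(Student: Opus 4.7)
The plan is to first reduce the three conditions to a single explicit description of the intersection $(r+\ell\Z)\cap a\Z$, and then verify the remaining equivalence prime-by-prime.

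First I would apply Remark with formula~(\ref{eq:intersections-1}) in the special case $s=0$, $m=a$. The standing hypothesis $\gcd(a,\ell)\mid r$ is exactly what guarantees that the intersection is nonempty, and~(\ref{eq:intersections-1}) then gives
\[
(r+\ell\Z)\cap a\Z = x+(\ell\vee a)\Z
\]
for a unique $x\in\{0,\dots,\ell\vee a-1\}$. Moreover, by formula~(\ref{eq:g}),
\[
g:=\gcd(x,\ell\vee a)=\gcd(r,\ell)\vee\gcd(0,a)=\gcd(r,\ell)\vee a.
\]
Condition~a) is therefore equivalent to $x+(\ell\vee a)\Z\subseteq c\Z$, which in turn says $c\mid x$ and $c\mid \ell\vee a$; since $g\mid \ell\vee a$, this pair of divisibilities is equivalent to $c\mid g=\gcd(r,\ell)\vee a$, which is condition~b). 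So a) $\Leftrightarrow$ b) is essentially free.

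For b) $\Leftrightarrow$ c), I would argue prime-by-prime. Fix a prime $p$ and set $k=v_p(c)$, $\alpha=v_p(a)$, $\beta=v_p(\ell)$, $\rho=v_p(r)$. The hypothesis $\gcd(a,\ell)\mid r$ translates to $\min(\alpha,\beta)\leqslant\rho$. Condition~b) becomes $k\leqslant\max(\min(\rho,\beta),\alpha)$ and condition~c) becomes the conjunction $k\leqslant\max(\alpha,\beta)$ and $\min(k,\beta)\leqslant\rho$. Assuming b), if $k\leqslant\alpha$ then c) is immediate; if $k>\alpha$, then b) forces $k\leqslant\min(\rho,\beta)$, which gives both parts of c). Conversely, assuming c), if $k\leqslant\alpha$ then b) holds trivially; if $k>\alpha$, then $k\leqslant\max(\alpha,\beta)$ forces $k\leqslant\beta$, so $\min(k,\beta)=k\leqslant\rho$ by c), and hence $\min(\rho,\beta)\geqslant k$, establishing b).

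I do not expect any serious obstacle: the only subtlety is being careful with the asymmetric role of the hypothesis $\gcd(a,\ell)\mid r$, which is used once to make the intersection nonempty (so that~(\ref{eq:intersections-1}) applies) and once in the prime-wise case analysis above.
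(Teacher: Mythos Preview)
Your proof is correct. The equivalence a)\,$\Leftrightarrow$\,b) is handled essentially the same way as in the paper: both arguments rest on the explicit description $(r+\ell\Z)\cap a\Z=x+(\ell\vee a)\Z$ with $\gcd(x,\ell\vee a)=\gcd(r,\ell)\vee a$ (the paper routes this through Lemma~\ref{finite_mult}, you cite~(\ref{eq:intersections-1}) and~(\ref{eq:g}) directly). For b)\,$\Leftrightarrow$\,c) the approaches genuinely differ: the paper gives a one-line chain of gcd/lcm identities,
\[
\gcd(c,\gcd(r,\ell)\vee a)=\gcd(c,r,\ell)\vee\gcd(c,a)=\gcd(c,\ell)\vee\gcd(c,a)=\gcd(c,\ell\vee a)=c,
\]
using $\gcd(c,\ell)\mid r$ to pass from the second expression to the third; your prime-by-prime valuation argument is longer but perfectly valid. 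One small remark: in the case $k\leqslant\alpha$ you call c) ``immediate'', but the second half of c), namely $\min(k,\beta)\leqslant\rho$, does need the hypothesis $\min(\alpha,\beta)\leqslant\rho$ via $\min(k,\beta)\leqslant\min(\alpha,\beta)\leqslant\rho$; you acknowledge this at the end, so there is no gap, but it would read more cleanly if stated in place.
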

\begin{proof}
By Lemma \ref{finite_mult}, a) implies b). Conversely, a) follows from b), because
$(r+\ell\Z)\cap a\Z\subseteq(\gcd(r,\ell)\vee a)\Z$.

Suppose that a) and b) hold. By b), we have $c\mid \gcd(r,\ell)\vee a\mid\ell\vee a$. Since $\gcd(a,\ell)\mid r$, we have $(r+\ell\Z)\cap a\Z\neq\emptyset$. So by a), we get $r\in c\Z+\ell\Z=\gcd(c,\ell)\Z$. Hence c) holds.

Finally suppose that c) holds. Then
\begin{equation*}
\gcd(c,\gcd(r,\ell)\vee a)=\gcd(c,r,\ell)\vee\gcd(c,a)=\gcd(c,\ell)\vee\gcd(c,a)=\gcd(c,\ell\vee a)=c,
\end{equation*}
and b) follows at once.
\end{proof}
We will need to know the smallest periods of the difference of the sets of multiples.

\begin{lemma}\label{lem:describe_period}
Assume that $A$ and $C$ are finite subsets of $\N$ and that the set $A$ is primitive.
\begin{compactenum}[a)]
\item If $A=\{a\}$ and $a\not\in\cM_C$, then $a\cdot\lcm((C^{\div a})^{prim})$ is the minimal period of $a\Z\setminus \cM_C$.
\item If the set $C^{\div a}=\{\frac{c}{\gcd(a,c)}:c\in C\}\subset \N\setminus\{1\}$ is primitive for every $a\in A$, then $\lcm(A \cup C)$ is the minimal period of $\cM_{A}\setminus \cM_C$.
\end{compactenum}
\end{lemma}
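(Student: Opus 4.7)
The plan is to deduce both parts from a single auxiliary fact: for any nonempty finite primitive set $B\subseteq\{2,3,\ldots\}$, the minimal period of $\cM_B$ is exactly $\lcm(B)$. That $\lcm(B)$ is a period is clear. For the converse I would take any period $d$ of $\cM_B$, pick $b\in B$, observe that $b+d\Z\subseteq \cM_B$ (using $b\in\cM_B$ and $d$-periodicity), and apply Lemma~\ref{finite_mult} with $r=b$, $\ell=d$, $s=0$, $m=1$ (the compatibility $\gcd(m,\ell)\mid s-r$ being trivial). This yields some $b'\in B$ with $b'\mid\gcd(b,d)\vee\gcd(0,1)=\gcd(b,d)\mid b$; primitivity of $B$ then forces $b'=b$ and hence $b\mid d$. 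Ranging over $b\in B$ gives $\lcm(B)\mid d$.

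For part (a), I would use the identity $a\Z\cap c\Z=\lcm(a,c)\Z=a\cdot (c/\gcd(a,c))\Z$ to rewrite $a\Z\cap\cM_C=a\cdot\cM_{C^{\div a}}$, so that $a\Z\setminus\cM_C=a\cdot(\Z\setminus\cM_{C^{\div a}})$. The hypothesis $a\notin\cM_C$ is equivalent to $1\notin C^{\div a}$, so $(C^{\div a})^{prim}$ is a finite primitive subset of $\{2,3,\ldots\}$. Since $\Z\setminus\cM_{C^{\div a}}=\Z\setminus\cM_{(C^{\div a})^{prim}}$ has the same minimal period as $\cM_{(C^{\div a})^{prim}}$, the auxiliary fact identifies this period as $\lcm((C^{\div a})^{prim})$. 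A short general observation, namely that the minimal period of $a\cdot S$ equals $a$ times the minimal period of $S$ for any nonempty $S\subseteq\Z$ (because any period of $a\cdot S$ must lie in $a\Z$, since $a\cdot S\subseteq a\Z$), completes part (a).

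For part (b) I would argue by double inclusion. That $\lcm(A\cup C)$ is a period of $\cM_A\setminus\cM_C$ is immediate since both $\cM_A$ and $\cM_C$ are $\lcm(A\cup C)$-periodic. Conversely, let $d$ be any period. For each $a\in A$, the assumption $C^{\div a}\subseteq\N\setminus\{1\}$ gives $a\notin\cM_C$, so $a\in\cM_A\setminus\cM_C$ and hence $a+d\Z\subseteq\cM_A$; Lemma~\ref{finite_mult} combined with primitivity of $A$ yields $a\mid d$. In particular $a\Z+d=a\Z$, so intersecting the $d$-periodic set $\cM_A\setminus\cM_C$ with $a\Z$ preserves $d$-periodicity. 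Since $a\Z\subseteq\cM_A$, this intersection equals $a\Z\setminus\cM_C$, which by part (a) and the assumed primitivity of $C^{\div a}$ has minimal period $a\cdot\lcm(C^{\div a})=\lcm(\{a\}\cup C)$ (using $a\cdot\lcm\{c/\gcd(a,c):c\in C\}=\lcm\{\lcm(a,c):c\in C\}$). Thus $\lcm(\{a\}\cup C)\mid d$ for every $a\in A$, and taking $\lcm$ over $a\in A$ gives $\lcm(A\cup C)\mid d$.

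The genuine content lies entirely in the auxiliary fact above, which drops straight out of Lemma~\ref{finite_mult} and primitivity. The one piece of delicate bookkeeping is the "minimal period of $a\cdot S$ is $a$ times that of $S$" step, which I expect to be the main mechanical hurdle, but it is essentially immediate once one notes that every period of $a\cdot S$ must itself be divisible by $a$; beyond that, both parts are quite short.
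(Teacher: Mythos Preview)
Your proof is correct and follows essentially the same route as the paper. The only difference is that where the paper cites \cite[Lemma~5.1b)]{KKL2016} for the fact that a finite primitive set $B\subseteq\{2,3,\ldots\}$ has $\lcm(B)$ as the minimal period of $\cM_B$, you re-derive this directly from Lemma~\ref{finite_mult}; the reduction of part~(a) via $a\Z\setminus\cM_C=a\cdot(\Z\setminus\cM_{C^{\div a}})$ and the argument for part~(b) (using $a\mid T$ and then part~(a) on $a\Z\setminus\cM_C$) are otherwise identical to the paper's.
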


\begin{proof}
a)\;
Let $a\in\N\setminus\cM_C$. Then $\emptyset\neq a\Z\setminus \cM_C=a\cdot(\Z\setminus \cM_{C^{\div a}})$, and the claim follows from the observation that
$\lcm((C^{\div a})^{prim})$ is the minimal period of $\cM_{C^{\div a}}$, see \cite[Lemma~5.1b)]{KKL2016}.\\
b)\; Let $T$ be  the minimal period of $\cM_A\setminus \cM_C$. Clearly, $T|\lcm(A\cup C)$. Now let $a\in A$. Since $1\not\in C^{\div a}$,
$a\in \cM_A\setminus\cM_C$, so $a+T\Z\subset \cM_A$. By Lemma~\ref{finite_mult} and primitivity of $A$ we get $a|T$, so that $T+(a\Z\setminus \cM_C)\subset a\Z\setminus \cM_C$ for every $a\in A$
and therefore $a\cdot\lcm((C^{\div a})^{prim})\mid T$ in view of part a). Hence
$\lcm(\{a\}\cup C)=a\lcm(C^{\div a})=a\lcm((C^{\div a})^{prim})\mid T$ for every $a\in A$, so that $\lcm(A\cup C)|T$.
\end{proof}
The following proposition is the ``multi-tool'' of this section:
\begin{proposition}\label{prop:central-new}
Let $A_n$ and $S_n$ be sets of positive integers. Let $b\in\Z$,
$n\geqslant N\geqslant0$,
$a\in A_n$ and $r\in\Z$ be such that
\begin{equation}\label{eq:key-ass-00-new}
(r+p_N\Z)\cap (a\Z)\setminus\cM_{S_n}\neq\emptyset
\end{equation}
and
\begin{equation}\label{eq:key-ass-0-new}
\left((r+p_N\Z)\cap (a\Z)\setminus\cM_{S_n}\right)+{b}\subseteq\cM_{A_n}.
\end{equation}
Assume that there is a subset $E_{n,a}$ of $A_n\setminus\{a\}$ for which
\begin{equation}\label{eq:to-check-1-new}
\sum_{a'\in E_{n,a}}\frac{1}{\varphi(a'^{\div a})}<\frac{1}{p_N},
\end{equation}
where $\varphi$ is Euler's totient function.
Then there is $a'\in A_n\setminus E_{n,a}$ such that
\begin{equation}\label{eq:pos-dens-a'-new}
{b}\in\gcd(a',\gcd(r,p_N)\vee a)\Z \text{ and }
\emptyset\neq\left((r+p_N\Z)\cap (a\Z)\setminus\cM_{S_n}\right)\cap\left(-{b}+a'\Z\right).
\end{equation}
Applying this to ${b}=\beta_n$\, for some $\beta=\Delta(k)\in\Delta(\Z)$ with
$|k|<\gcd(a',a)$ for all $a'\in A_n\setminus E_{n,a}$, implies $\beta=0$.
\end{proposition}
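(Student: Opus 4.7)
My first observation is that the divisibility $b\in\gcd(a',\gcd(r,p_N)\vee a)\Z$ is a free consequence of the second conclusion. Indeed, if $\alpha\in T\cap(-b+a'\Z)$ with $T:=((r+p_N\Z)\cap a\Z)\setminus\cM_{S_n}$, then Remark~\ref{remark:notations} puts $\alpha\in g\Z$ for $g:=\gcd(r,p_N)\vee a$, while $a'\mid\alpha+b$; subtracting gives $\gcd(a',g)\mid b$. So the real task is to produce some $a'\in A_n\setminus E_{n,a}$ with $T\cap(-b+a'\Z)\neq\emptyset$.

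I would argue by contradiction: assume $T\cap(-b+a'\Z)=\emptyset$ for every $a'\in A_n\setminus E_{n,a}$. Combined with $T+b\subseteq\cM_{A_n}$, this forces $T\subseteq\bigcup_{a'\in E_{n,a}}(-b+a'\Z)$, and hence the AP $P:=(r+p_N\Z)\cap a\Z$ is contained in $\cM_{S_n}\cup\bigcup_{a'\in E_{n,a}}(-b+a'\Z)$. The goal is then to contradict $T\neq\emptyset$ via a density estimate on this union.

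The estimate is carried out in the normalised form $P=g(\tilde r+\tilde\ell\Z)$ from Remark~\ref{remark:notations}, with $\gcd(\tilde r,\tilde\ell)=1$ and $\ddelta(P)=1/(p_N\vee a)$. For each $a'\in E_{n,a}$ the intersection $P\cap(-b+a'\Z)$ is either empty or an AP; compatibility for nonemptiness demands a residue congruence modulo $\gcd(\tilde\ell,a'^{\div g})$, and the coprimality $\gcd(\tilde r,\tilde\ell)=1$ forces the admissible residue to lie in a coprime residue class modulo that gcd --- this is where Euler's totient function enters, yielding a bound on the relative density of the contribution proportional to $1/\varphi(a'^{\div a})$. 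Summing over $E_{n,a}$ and using the hypothesis $\sum 1/\varphi(a'^{\div a})<1/p_N$ together with the existence of a logarithmic density for $\cM_{S_n}$ (and for its intersection with $P$), the resulting density inequality contradicts $T\neq\emptyset$.

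For the application, if $b=\beta_n$ with $\beta=\Delta(k)\in\Delta(\Z)$ and $|k|<\gcd(a',a)$ for every $a'\in A_n\setminus E_{n,a}$, then the first conclusion yields $\gcd(a',g)\mid\beta_n$ in $\Z/p_n\Z$ for the $a'$ produced by the main statement; since $a\mid g$ we have $\gcd(a',a)\mid\gcd(a',g)\mid\beta_n\equiv k\pmod{p_n}$, and together with $|k|<\gcd(a',a)$ this forces $k=0$, i.e.~$\beta=0$. The main obstacle is the sharp density estimate producing the factor $1/\varphi(a'^{\div a})$: the cruder $1/a'^{\div a}$ would not meet the threshold $1/p_N$, so the coprimality built into the normalised AP via $\gcd(\tilde r,\tilde\ell)=1$ is essential.
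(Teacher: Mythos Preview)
Your overall architecture matches the paper's: reduce to showing that $T:=(r+p_N\Z)\cap a\Z\setminus\cM_{S_n}$ is not covered by $\bigcup_{a'\in E_{n,a}}(-b+a'\Z)$, and derive the divisibility conclusion from the nonemptiness (the paper does this too, implicitly). The application to $\beta=\Delta(k)$ is also fine.

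The gap is in the density estimate. Your explanation for the totient factor --- that ``coprimality $\gcd(\tilde r,\tilde\ell)=1$ forces the admissible residue to lie in a coprime residue class'' --- does not produce a $1/\varphi$ bound: for fixed $b$, the set $P\cap(-b+a'\Z)$ is either empty or a single arithmetic progression of density $1/\lcm(p_N\vee a,a')$, and no $\varphi$ appears there. If you then try the naive bound $\ddelta(T)\le\sum_{a'}\ddelta\bigl(P\cap(-b+a'\Z)\bigr)$, the left side carries a factor $1-\ddelta\bigl(\cM_{(S_n^{\div g})^{\perp\tilde\ell}}\bigr)$ from Lemma~\ref{lemma:intersections-density} that can be arbitrarily small, so no contradiction results.

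The paper's fix is to keep $\setminus\,\cM_{S_n}$ on \emph{both} sides, bounding $\ddelta(T)\le\sum_{a'}\ddelta\bigl((g\Z)\cap(-b+a'\Z)\setminus\cM_{S_n}\bigr)$ so that each summand also carries a factor $1-\ddelta(\cdots)$. These two ``$S_n$-free'' factors are then compared via Behrend's inequality: one exhibits a set $R_n(a')\subseteq\cM_{\Spec(a'^{\div a})}$ such that $(S_n^{\div g})^{\perp(p_N^{\div g})}\setminus R_n(a')$ is contained in the set of multiples appearing in the summand, and after dividing through by the common positive factor one is left with a correction of at most $\prod_{p\mid a'^{\div a}}(1-1/p)^{-1}=a'^{\div a}/\varphi(a'^{\div a})$. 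Combined with the modulus ratio $(p_N\vee a)/(g\vee a')\le p_N^{\div a}/a'^{\div a}$, this yields at most $p_N/\varphi(a'^{\div a})$ per term, and then~\eqref{eq:to-check-1-new} gives the contradiction. So the totient arises from the prime spectrum of $a'^{\div a}$ via Behrend's inequality, not from a coprime-residue count on $P$.
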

\begin{proof}
In view of assumption \eqref{eq:key-ass-0-new} we have
\begin{equation}\label{eq:48-new}
\begin{split}
\left((r+p_N\Z)\cap (a\Z)\setminus\cM_{S_n}\right)\subseteq
\left(-{b}+\cM_{E_{n,a}}\right)\cup\left(-{b}+ \cM_{D_{n,a}}\right),
\end{split}
\end{equation}
where $D_{n,a}=A_n\setminus E_{n,a}$. We prove below that \eqref{eq:to-check-1-new} implies
\begin{equation}\label{eq:key-ass-1-new}
\left((r+p_N\Z)\cap(a\Z)\setminus\cM_{S_n}\right)+{b}\not\subseteq\cM_{E_{n,a}}.
\end{equation}
Hence and by \eqref{eq:48-new} there is $a'\in D_{n,a}$ such that
\eqref{eq:pos-dens-a'-new} holds.
Hence ${b}=0$ or $|{b}|\geqslant \gcd(a',\gcd(r,p_N)\vee a)\geqslant\gcd(a',a)$.\\
It remains to show that \eqref{eq:to-check-1-new} implies \eqref{eq:key-ass-1-new}.

Consider any
$a\in A_n$ for which
$
\left((r+p_N\Z)\cap(a\Z)\setminus\cM_{S_n}\right)\neq\emptyset
$.
Then $\gcd(a,p_N)\mid r$, and
Lemma~\ref{lemma:intersections-density} implies
\begin{equation}\label{eq:example-1-new}
\begin{split}
0<\ddelta\left((r+p_N\Z)\cap(a\Z)\setminus\cM_{S_n}\right)
&=
\frac1{p_N\vee a}\left(1-\ddelta\left(\cM_{(S_n^{\div g_n})^{\perp(p_N^{\div g_n})}}\right)\right)
,
\end{split}
\end{equation}
where
\begin{equation*}
g_n:=\gcd(r,p_N)\vee a\quad\text{and}\quad
p_N^{\div g_n}=\frac{p_N\vee g_n}{g_n}=\frac{p_N\vee a}{g_n}
\mid p_N^{\div a}.
\end{equation*}
Suppose for a contradiction that there is inclusion in \eqref{eq:key-ass-1-new}.
Then
\begin{equation*}
(r+p_N\Z)\cap(a\Z)\setminus\cM_{S_n}
\subseteq\bigcup_{a'\in E_{n,a}}(-{b}+a'\Z),
\end{equation*}
so that
\begin{equation}\label{eq:example-1a-new}
\begin{split}
&\ddelta\left((r+p_N\Z)\cap(a\Z)\setminus\cM_{S_n}\right)\\
&\leqslant
\sum_{a'\in E_{n,a}}\ddelta\left((r+p_N\Z)\cap(-{b}+a'\Z)\cap(a\Z)\setminus\cM_{S_n}\right)\\
&\leqslant
\sum_{a'\in E_{n,a}}
\ddelta\left((g_n\Z)\cap(-{b}+a'\Z)\setminus\cM_{S_n}\right)
\\
&=
\sum_{a'\in E_{n,a},\ \gcd(g_n,a')\mid{b}}
\ddelta\left((g_n\Z)\cap(-{b}+a'\Z)\setminus\cM_{S_n}\right)
\\
&=
\sum_{a'\in E_{n,a},\ \gcd(g_n,a')\mid{b}}
\frac{1}{g_n\vee a'}\left(1-
\ddelta\left(\cM_{(S_n^{\div (g_n\vee\gcd({b},a'))})^{\perp\frac{g_n\vee a'}{g_n\vee\gcd({b},a')}}}\right)\right).
\end{split}
\end{equation}
Notice that $\gcd(g_n,a')\mid{b}$ implies
\begin{equation}\label{eq:simplification}
\frac{g_n\vee a'}{g_n\vee\gcd({b},a')}
=
\frac{(g_n\vee a')\cdot\gcd(g_n,a')}{g_n\cdot\gcd({b},a')}=\frac{a'}{\gcd({b},a')}=a'^{\div b}.
\end{equation}
As
\begin{equation}\label{eq:quotient-new}
\frac{p_N\vee a}{a'\vee g_n}
\leqslant
\frac{p_N\vee a}{a'\vee a}
=
\frac{p_N^{\div a}}{a'^{\div a}},
\end{equation}
\eqref{eq:example-1-new}, \eqref{eq:example-1a-new} and \eqref{eq:simplification} together yield
\begin{equation}\label{eq:example-1b-new}
\begin{split}
1-\ddelta\left(\cM_{(S_n^{\div g_n})^{\perp(p_N^{\div g_n})}}\right)
&\leqslant
\sum_{a'\in E_{n,a}}
\frac{p_N^{\div a}}{a'^{\div a}}\left(1-
\ddelta\left(\cM_{(S_n^{\div(g_n\vee\gcd({b},a'))})^{\perp{a'^{\div b}}}}\right)\right).
\end{split}
\end{equation}
Denote
\begin{equation*}
\begin{split}
R_n(a')
:=&
\left\{b^{\div g_n}: b\in S_n,\ b^{\div g_n}\perp p_N^{\div g_n},\
b^{\div g_n}\not\perp a'^{\div g_n}\right\}.
\end{split}
\end{equation*}
Then $R_n(a')\subseteq S_n^{\div g_n}$ trivially, and
we claim that
\begin{equation*}
(S_n^{\div g_n})^{\perp(p_N^{\div g_n})}\setminus R_n(a')
\subseteq
\cM_{(S_n^{\div(g_n\vee\gcd({b},a'))})^{\perp{a'^{\div b}}}}.
\end{equation*}
 Indeed, each $b^{\div g_n}\in S_n^{\div g_n}\setminus R_n(a')$, which is also coprime to $p_N^{\div g_n}$, is coprime to $a'^{\div g_n}$ and, a fortiori, to
$a'^{\div b}$ because $\gcd(g_n,a')\mid b$.
Moreover, $b^{\div g_n}=\frac{b\vee g_n}{g_n}$ is a multiple of
$\frac{b\vee g_n\vee\gcd({b},a')}{g_n\vee\gcd({b},a')}$, so that the latter is also coprime to
${a'^{\div b}}$.
 Therefore,
\begin{equation*}
(S_n^{\div g_n})^{\perp(p_N^{\div g_n})}
\subseteq
R_n(a')\cup
\cM_{(S_n^{\div(g_n\vee\gcd({b},a'))})^{\perp{a'^{\div b}}}},
\end{equation*}
so that Behrend's inequality (see \cite[Thm.~0.12]{Hall1996} for a reference)
yields
\begin{equation*}
\begin{split}
1-\ddelta\left(\cM_{(S_n^{\div g_n})^{\perp(p_N^{\div g_n})}}\right)
&\geqslant
1-\ddelta\left(\cM_{R_n(a')\cup
\cM_{\left(S_n^{\div(g_n\vee\gcd({b},a'))}\right)^{\perp{a'^{\div b}}}}}\right)\\
&=
1-\ddelta\left(\cM_{R_n(a')\cup
{(S_n^{\div(g_n\vee\gcd({b},a'))})^{\perp{a'^{\div b}}}}}\right)\\
&\geqslant
\left(1-\ddelta\left(\cM_{R_n(a')}\right)\right)\cdot
\left(1-
\ddelta\left(\cM_{(S_n^{\div(g_n\vee\gcd({b},a'))})^{\perp{a'^{\div b}}}}\right)\right).
\end{split}
\end{equation*}
Therefore \eqref{eq:example-1b-new} leads to
\begin{equation*}
1-\ddelta\left(\cM_{(S_n^{\div g_n})^{\perp(p_N^{\div g_n})}}\right)
\leqslant
\sum_{a'\in E_{n,a}}
\frac{{p_N}^{\div a}}{a'^{\div a}}\cdot
\frac{1-\ddelta\left(\cM_{(S_n^{\div g_n})^{\perp(p_N^{\div g_n})}}\right)}{1-\ddelta\left(\cM_{R_n(a')}\right)}.
\\
\end{equation*}
As $1-\ddelta\left(\cM_{(S_n^{\div g_n})^{\perp(p_N^{\div g_n})}}\right)>0$ in view of \eqref{eq:example-1-new}, we can divide the last inequality by this expression, so that
\begin{equation}\label{eq:contradiction-B1-new}
\begin{split}
1
&\leqslant
\sum_{a'\in E_{n,a}}
\frac{p_N^{\div a}}{a'^{\div a}}\cdot
\frac{1}{1-\ddelta\left(\cM_{R_n(a')}\right)}
\leqslant
\sum_{a'\in E_{n,a}}
\frac{p_N^{\div a}}{a'^{\div a}}\cdot
\frac{1}{1-\ddelta\left(\cM_{\Spec(a'^{\div a})}\right)}\\
&=
\sum_{a'\in E_{n,a}}
\frac{{p_N}^{\div a}}{a'^{\div a}}\cdot
\prod_{p\mid a'^{\div a}}\frac{1}{1-\frac{1}{p}}
\leqslant
{p_N}\cdot \sum_{a'\in E_{n,a}}\frac{1}{\varphi(a'^{\div a})},
\end{split}
\end{equation}
where we used the fact that $R_n(a')\subseteq\cM_{\Spec(a'^{\div g_n})}\subseteq\cM_{\Spec(a'^{\div a})}$. But the last estimate contradicts assumption
\eqref{eq:to-check-1-new}.
\end{proof}

From now on we assume that the sets $\wcH_n$ have some particular \emph{arithmetic structure}: There is a primitive set $A_n$ of positive integers
such that for each $a_n\in A_n$ there is a set $S_n=S_n(a_n)$ of positive integers satisfying
\begin{equation}\label{eq:additional-structure}\tag{AS}
\wcH_n=\bigcup_{a_n\in A_n}a_n\Z\setminus\cM_{S_n(a_n)}
\text{\quad and\quad}a_n\Z\setminus\cM_{S_n(a_n)}\neq\emptyset\;(a_n\in A_n).
\end{equation}
Observe that $\min A_n\to\infty$ as $n\to\infty$, because $A_n\subseteq\wcH_n\subseteq\cH_n$ and $\min\cH_n\to\infty$.

In the remaining part of this section we prove the weak double disjointness condition \eqref{DD'} under the
arithmetic structure  assumption~\eqref{eq:additional-structure}, which allows to apply Proposition~\ref{prop:local-invariance-new} in this situation. Later, in Theorem~\ref{theo:ess-holes-arithmetic}, we verify \eqref{eq:additional-structure} in the $\cB$-free setting.

\begin{proposition}\label{prop:sufficient-for-(D')}
Assume condition \eqref{eq:additional-structure}.
If
\begin{equation}\label{eq:theo2-ass-2-new-a}
\lim_{n\to\infty}
\sum_{a'\in A_n}
\frac{1}{\varphi({a'})}
=0,
\end{equation}
then the weak disjointness condition \eqref{D'} is satisfied.

Moreover, if the automorphism $F$ of~$(X_\eta,\sigma)$ is described by a block code $\{0,1\}^{[-m:m]}\to\{0,1\}$, then the set $K$ is contained in $[-m,m]$, $\inn_{\partial W}(V_{k_i})\cap\inn_{\partial W}(V_{k_j})=\emptyset$ for any different $k_i,k_j\in K$, and
$\partial W=\bigcup_{k\in K}V_k'$, where $V_k':=\overline{\inn_{\partial W}(V_k)}$.
\end{proposition}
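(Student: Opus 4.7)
The plan is to prove (D') by contradiction using its equivalence to (Seh') from Lemma~\ref{lemma:weak-separated-essential-holes-contition}, and then feed the resulting arithmetic setup into Proposition~\ref{prop:central-new}. Suppose \eqref{D'} fails. Then there exist $k\in\Z\setminus\{0\}$, $N\ge1$ and $r\in\Z$ with
\[
\emptyset\neq (r+p_N\Z)\cap\wcH_n\subseteq\wcH_n-k\quad\text{for all }n\ge N.
\]
Using the arithmetic structure \eqref{eq:additional-structure}, for every such $n$ the non-emptiness yields some $a_n\in A_n$ with $(r+p_N\Z)\cap a_n\Z\setminus\cM_{S_n(a_n)}\neq\emptyset$, and the inclusion then gives $\bigl((r+p_N\Z)\cap a_n\Z\setminus\cM_{S_n(a_n)}\bigr)+k\subseteq\wcH_n\subseteq\cM_{A_n}$. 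This is exactly the hypothesis \eqref{eq:key-ass-00-new}--\eqref{eq:key-ass-0-new} of Proposition~\ref{prop:central-new} with $b=k$ and $a=a_n$.

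The key is choosing the exceptional set $E_{n,a_n}\subseteq A_n\setminus\{a_n\}$ correctly. I would take
\[
E_{n,a_n}:=\{a'\in A_n\setminus\{a_n\}:\gcd(a',a_n)\leqslant |k|\},
\]
so that every $a'\in A_n\setminus E_{n,a_n}$ either equals $a_n$ or satisfies $\gcd(a',a_n)>|k|$; recall that $\min A_n\to\infty$ (noted after \eqref{eq:additional-structure}), so for $n$ large even $a'=a_n$ fulfils $\gcd(a',a_n)=a_n>|k|$. Thus once the smallness condition \eqref{eq:to-check-1-new} is verified for large~$n$, Proposition~\ref{prop:central-new} forces $k=0$, contradicting $k\neq0$.

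The main obstacle is verifying \eqref{eq:to-check-1-new} from the hypothesis \eqref{eq:theo2-ass-2-new-a}, which is stated in terms of $\varphi(a')$ rather than $\varphi(a'^{\div a_n})$. Here I would use the elementary inequality $\varphi(a')/\varphi(a'/d)\leqslant d$ whenever $d\mid a'$ (a consequence of $\varphi(n)/n$ being non-increasing along divisibility), applied with $d=\gcd(a',a_n)\leqslant|k|$ for $a'\in E_{n,a_n}$. This gives
\[
\sum_{a'\in E_{n,a_n}}\frac{1}{\varphi(a'^{\div a_n})}\leqslant |k|\sum_{a'\in A_n}\frac{1}{\varphi(a')},
\]
and the right-hand side is less than $1/p_N$ for all sufficiently large $n$ by \eqref{eq:theo2-ass-2-new-a} (with $k$ and $N$ fixed). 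This completes the proof of \eqref{D'}.

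For the ``Moreover'' clause, once \eqref{D'} is established, the statements about $K$, the pairwise disjointness of the interiors $\inn_{\partial W}(V_{k_i})$, and $\partial W=\bigcup_{k\in K}V_k'$ follow directly from Corollary~\ref{coro:new-1-new}, which is an immediate consequence of Propositions~\ref{prop:use(D')-new} and~\ref{prop:Baire-new} under assumption \eqref{D'}; nothing further needs to be argued.
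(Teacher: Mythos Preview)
Your proof is correct and follows essentially the same route as the paper: contradict \eqref{Seh'}, extract $a\in A_n$ from \eqref{eq:additional-structure}, choose $E_{n,a}=\{a'\in A_n:\gcd(a',a)\le|k|\}$, bound $1/\varphi(a'^{\div a})\le|k|/\varphi(a')$ via the monotonicity of $\varphi(n)/n$ along divisibility, and invoke Proposition~\ref{prop:central-new} to force $k=0$; the ``Moreover'' clause is then Corollary~\ref{coro:new-1-new}. Your version is even slightly more careful in excluding $a_n$ from $E_{n,a_n}$ explicitly (as required by the hypothesis of Proposition~\ref{prop:central-new}), whereas the paper relies tacitly on $\min A_n\to\infty$ for this.
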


\begin{proof} Abbreviate $y=y_F$. Suppose for a contradiction that \eqref{D'} does not hold, equivalently that \eqref{Seh'} does not hold. Then there are $k\in\Z\setminus\{0\}$ and an arithmetic progression $r+p_N\Z$ such that for all $n\ge N$
\begin{equation*}
\emptyset\neq(r+p_N\Z)\cap\wcH_n\subseteq \wcH_n-k.
\end{equation*}
Let $n\ge N$. In view of property \eqref{eq:additional-structure},
there is $a\in A_n$
such that
\begin{equation}\label{eq:inclusions-3a-new}
\emptyset\neq(r+p_N\Z)\cap a\Z\setminus\cM_{S_n}+k\subseteq \cM_{A_n}.
\end{equation}
Let $E_{n,a}=\{a'\in A_n: \gcd(a',a)\leqslant|k|\}$.
Since
\begin{equation*}
\varphi(a'^{\div a})=a'^{\div a}\prod_{p\mid a'^{\div a}}\left(1-\frac{1}{p}\right)\geqslant\frac{1}{|k|}a'\prod_{p\mid a'}\left(1-\frac{1}{p}\right)=\frac{1}{|k|}\varphi(a')
\end{equation*}
for any $a'\in E_{n,a}$,
assumption \eqref{eq:theo2-ass-2-new-a} above implies assumption \eqref{eq:to-check-1-new} of Proposition~\ref{prop:central-new}. So this
proposition applies to the inclusion in~\eqref{eq:inclusions-3a-new},
and there is $a'\in A_n\setminus E_{n,a}$
such that $\gcd(a',a)\mid k$.
As $|k|<\gcd(a',a)$ for all $a'\in A_n\setminus E_{n,a}$,
this contradicts the assumption $k\in\Z\setminus\{0\}$.

The remaining conclusions follow from Corollary~\ref{coro:new-1-new}.
\end{proof}

\begin{remark}\label{rem:regular1}
Any $\cB$-free Toeplitz subshift satisfying \eqref{eq:theo2-ass-2-new-a} is regular, because $\wcH_n\subseteq\cM_{\cA_{S_n}^{\infty}}$ and $d(\cM_{\cA_{S_n}^{\infty}})\leq\sum_{a\in\cA_{S_n}^{\infty}}\frac{1}{a}\leq\sum_{a\in\cA_{S_n}^{\infty}}\frac{1}{\varphi(a)}$.
\end{remark}

\begin{proposition}\label{prop:local-invariance-new-2}
Assume condition \eqref{eq:additional-structure}.
If
\begin{equation}\label{eq:theo2-ass-2-new}
\lim_{n\to\infty}
\sum_{a'\in A_n\setminus\{a\}}
\frac{1}{\varphi({a'}^{\div a})}
=0\quad\text{for all choices of }a\in A_n\; (\text{where }{a'}^{\div a}={a'}/{\gcd(a',a)}),
\end{equation}
then the weak double disjointness condition \eqref{DD'} -- and a fortiori condition \eqref{D'} -- is satisfied.

Moreover, the conclusions of Proposition~\ref{prop:sufficient-for-(D')} can be complemented by
$V_k'+(y_F-\Delta(k))\Z\subseteq V_k'$ for all $k\in K$.
\end{proposition}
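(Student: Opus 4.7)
The plan is to reduce everything to a proof of the weak double disjointness condition \eqref{DD'}: once it is established, Proposition~\ref{prop:local-invariance-new} delivers the remaining conclusions ($V_k'+(y_F-\Delta(k))\Z\subseteq V_k'$, the location of $K\subseteq[-m,m]$, and the disjointness of interiors) verbatim. So the entire task is to verify \eqref{DD'}.

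Suppose for contradiction that \eqref{DD'} fails. By Lemma~\ref{lemma:weak-separated-essential-holes-contition} its combinatorial counterpart \eqref{DSeh'} also fails, so we may fix $k\in\Z\setminus\{0\}$, $\beta\in G$, $N\in\N$ and $r\in\Z$ with
\[
\emptyset\neq(r+p_N\Z)\cap\wcH_n\subseteq(\wcH_n-\beta_n)\cap(\wcH_n-2\beta_n-k)\quad(n\ge N).
\]
For each $n\ge N$ the arithmetic structure assumption \eqref{eq:additional-structure} lets us pick $a=a_n\in A_n$ with $Z_n:=(r+p_N\Z)\cap a\Z\setminus\cM_{S_n(a)}\neq\emptyset$; then $Z_n\subseteq\wcH_n$, so both $Z_n+\beta_n$ and $Z_n+2\beta_n+k$ lie in $\wcH_n\subseteq\cM_{A_n}$. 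Set $E_{n,a}:=\{a'\in A_n\setminus\{a\}:\gcd(a',a)\leqslant|k|\}$. Exactly as in the proof of Proposition~\ref{prop:sufficient-for-(D')}, the estimate $\varphi(a'^{\div a})\geqslant\varphi(a')/|k|$ together with \eqref{eq:theo2-ass-2-new} gives $\sum_{a'\in E_{n,a}}1/\varphi(a'^{\div a})<1/p_N$ for all sufficiently large $n$.

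Next I apply Proposition~\ref{prop:central-new} first with $b=\beta_n$ to extract $a'_1\in A_n\setminus E_{n,a}$ with $\gcd(a'_1,\gcd(r,p_N)\vee a)\mid\beta_n$ and the refined set $Z^{(1)}_n:=Z_n\cap(-\beta_n+a'_1\Z)\neq\emptyset$. Using Remark~\ref{remark:notations}, $Z^{(1)}_n$ rewrites as $(r^{(1)}+L^{(1)}\Z)\cap a\Z\setminus\cM_{S_n(a)}$ with $L^{(1)}=p_N\vee a'_1$, and $Z^{(1)}_n+(2\beta_n+k)\subseteq\cM_{A_n}$ is inherited from $Z_n$. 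A second application of Proposition~\ref{prop:central-new} (with $p_N\leadsto L^{(1)}$, $r\leadsto r^{(1)}$, $b=2\beta_n+k$, and $E=\varnothing$ so that \eqref{eq:to-check-1-new} is vacuous) produces $a'_2\in A_n$ and a single witness $z\in Z^{(1)}_n$ satisfying $z\in a\Z$, $z+\beta_n\in a'_1\Z$ and $z+2\beta_n+k\in a'_2\Z$. Reading off divisibilities yields $\gcd(a,a'_1)\mid\beta_n$, $\gcd(a,a'_2)\mid 2\beta_n+k$ and $\gcd(a'_1,a'_2)\mid\beta_n+k$, so $d:=\gcd(a,a'_1,a'_2)$ divides both $\beta_n$ and $2\beta_n+k$, hence $d\mid k$.

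The heart of the argument, and the step I expect to be the main obstacle, is forcing $d>|k|$ in order to conclude $k=0$. If $a'_1=a$ then $a\mid\beta_n$, and the choice $\min A_n\to\infty$ (which holds since $A_n\subseteq\wcH_n\subseteq\cH_n$) together with the second divisibility immediately forces $k=0$ for large $n$. If $a'_1\neq a$ but $a'_2=a$, the symmetric argument works. In the remaining ``generic'' case both $\gcd(a'_i,a)>|k|$ while $\gcd(a,a'_1,a'_2)$ could a priori be small; I expect to overcome this by iterating the two-step application while refining $E$ at each pass, using that the sum in \eqref{eq:theo2-ass-2-new} tends to $0$ so one can repeatedly trim the relevant $E$ and still satisfy \eqref{eq:to-check-1-new}. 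The primitivity of $A_n$ prevents the iteration from stalling, and after finitely many rounds one obtains a $d\mid k$ with $d>|k|$, contradicting $k\neq 0$ and completing the proof of \eqref{DD'}.
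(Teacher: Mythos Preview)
Your overall strategy---reduce to \eqref{DD'} via \eqref{DSeh'} and then invoke Proposition~\ref{prop:central-new}---is exactly right, and your opening paragraph correctly identifies that Proposition~\ref{prop:local-invariance-new} handles everything once \eqref{DD'} is in place. The gap is in your choice of the exceptional set $E_{n,a}$, and it is what forces you into the unresolved ``generic case'' at the end.

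You set $E_{n,a}=\{a'\in A_n\setminus\{a\}:\gcd(a',a)\leqslant|k|\}$, mimicking the proof of Proposition~\ref{prop:sufficient-for-(D')}. But that proposition works under the weaker hypothesis~\eqref{eq:theo2-ass-2-new-a}, which only controls $\sum_{a'\in A_n}1/\varphi(a')$; the detour through $\varphi(a'^{\div a})\ge\varphi(a')/|k|$ is needed there precisely because~\eqref{eq:theo2-ass-2-new-a} says nothing about $\varphi(a'^{\div a})$. Here the hypothesis~\eqref{eq:theo2-ass-2-new} is strictly stronger: it asserts directly that $\sum_{a'\in A_n\setminus\{a\}}1/\varphi(a'^{\div a})\to 0$. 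This means you may take $E_{n,a}=A_n\setminus\{a\}$ itself, and condition~\eqref{eq:to-check-1-new} of Proposition~\ref{prop:central-new} is satisfied for all large $n$ with no further estimate.

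With this choice $A_n\setminus E_{n,a}=\{a\}$ is a singleton, so Proposition~\ref{prop:central-new} applied to the inclusion $Z_n+\beta_n\subseteq\cM_{A_n}$ forces $a\mid\beta_n$, and applied to $Z_n+(2\beta_n+k)\subseteq\cM_{A_n}$ forces $a\mid 2\beta_n+k$. Subtracting gives $a\mid k$. Since $\min A_n\to\infty$, this contradicts $k\neq 0$ and finishes the proof of \eqref{DD'} in two lines---no second application, no iteration, no juggling of $a'_1,a'_2$. Your two-pass scheme (and the vague iteration you propose to close it) is an artifact of not exploiting the full strength of~\eqref{eq:theo2-ass-2-new}; as written, that part is incomplete, and the application of Proposition~\ref{prop:central-new} with the non-period modulus $L^{(1)}$ would also require justification.
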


\begin{proof} Abbreviate $y=y_F$. Suppose for a contradiction that \eqref{DD'} does not hold, equivalently that \eqref{DSeh'} does not hold. Then there are $k\in\Z\setminus\{0\}$, $\beta\in G$, an arithmetic progression $r+p_N\Z$ such that for all $n\ge N$
\begin{equation*}
\emptyset\neq(r+p_N\Z)\cap\wcH_n\subseteq (\wcH_n-\beta_n)\cap(\wcH_n-2\beta_n-k).
\end{equation*}
Let $n\ge N$. In view of property \eqref{eq:additional-structure},
there is $a\in A_n$
such that
\begin{equation}\label{eq:inclusions-3-new}
\emptyset\neq(r+p_N\Z)\cap a\Z\setminus\cM_{S_n}+\beta_n\subseteq \cM_{A_n}\text{ and }
\emptyset\neq(r+p_N\Z)\cap a\Z\setminus\cM_{S_n}+2\beta_n+k\subseteq \cM_{A_n}.
\end{equation}
Let $E_{n,a}= A_n\setminus\{a\}$.
In view of assumption \eqref{eq:theo2-ass-2-new},
Proposition~\ref{prop:central-new} applies to both inclusions in~\eqref{eq:inclusions-3-new}, and as $A_n\setminus E_{n,a}=\{a\}$, we can conclude that $a\mid\beta_n$ and $a\mid 2\beta_n+k$, so that $a\mid k$.
As $a\in A_n$ and $\min A_n\to\infty$, this contradicts the assumption
$k\in\Z\setminus\{0\}$.

The final conclusion follows from Proposition~\ref{prop:local-invariance-new}.
\end{proof}
\begin{theorem}\label{theo:pre-B-free}
Assume condition \eqref{eq:additional-structure} and let the automorphism $F$ of~$(X_\eta,\sigma)$ be described by a block code $\{0,1\}^{[-m:m]}\to\{0,1\}$.
Under assumption~\eqref{eq:theo2-ass-2-new} of Proposition~\ref{prop:local-invariance-new-2} holds:
\begin{compactenum}[a)]
\item For each $n>0$ and each $k\in K$ there exists $a\in A_n$ such that
\begin{equation*}
a\mid (y_F)_{n}-k.
\end{equation*}
\item
For each $n>0$ and each $a\in A_n$ there exists some $k\in K$ such that
\begin{equation*}
a\mid (y_F)_{n}-k.
\end{equation*}
If $a>2m$, then this $k\in K$ is unique. Denote it by $\kappa_n(a)$.
\item Suppose $n$ is so large that $\min A_n>2m$, and denote by $\cG_n$ the graph with vertices $A_n$ and edges $(a,a')$ whenever $\gcd(a,a')>2m$. Then $\kappa_n(a)=\kappa_n(a')$ for any two $a,a'$ in the same connected component of $\cG_n$. In particular, $\card K=1$ if $\cG_n$ is connected.
\item If $\card K=1$, say $K=\{k\}$, then, for each $n$, $(y_F)_n-k$ is a multiple of the minimal period $\wq_n$ of $\wcH_n$.
\item If $\card K=1$ and $\wq_n=p_n$ for all $n$, then $(X_\eta,\sigma)$ has a trivial centralizer.
\end{compactenum}
\end{theorem}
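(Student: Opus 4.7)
The starting point is Proposition~\ref{prop:local-invariance-new-2}, which under~\eqref{eq:theo2-ass-2-new} yields, for every $k\in K$,
\[
V_k'+(y_F-\Delta(k))\Z\subseteq V_k'.
\]
I plan to use this invariance to feed Proposition~\ref{prop:central-new} in two dual ways (once for (a), once for (b)), and then derive (c)--(e) by elementary bookkeeping.

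For part (a), I would fix $k\in K$ and exploit the nonempty interior of $V_k'$ in $\partial W$ to pick a cylinder $U_N(\Delta(r))\cap\partial W\subseteq V_k'$, with $N$ as large as convenient. Lemma~\ref{lemma:newG-2}a), combined with the invariance and \eqref{eq:additional-structure}, gives
\[
\bigl((r+p_N\Z)\cap\wcH_n\bigr)+\bigl((y_F)_n-k\bigr)\subseteq\wcH_n\subseteq\cM_{A_n}
\quad\text{for every }n\ge N.
\]
For any $a\in A_n$ meeting $(r+p_N\Z)$ in $a\Z\setminus\cM_{S_n(a)}$ --- which exists since the intersection meets $\wcH_n$ --- the choice $E_{n,a}=A_n\setminus\{a\}$ makes hypothesis~\eqref{eq:to-check-1-new} of Proposition~\ref{prop:central-new} valid for $n$ sufficiently large by~\eqref{eq:theo2-ass-2-new}, and the proposition forces $a\mid (y_F)_n-k$. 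For the finitely many remaining small $n$, I would use $\wcH_{n'}\subseteq\wcH_n$ together with $p_n\mid p_{n'}$: any $a_{n'}\in A_{n'}$ obtained at a level $n'$ that already works lies in some $a_n\Z\setminus\cM_{S_n(a_n)}$, so $a_n\mid a_{n'}$, and the divisibility descends modulo $p_n$.

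Part (b) is where I expect the main technical obstacle. Given $a\in A_n$, pick $j\in a\Z\setminus\cM_{S_n(a)}$ (nonempty by~\eqref{eq:additional-structure}), locate $k\in K$ with $j\in\wcH_n^k$, i.e.\ $U_n(\Delta(j))\cap V_k'\ne\emptyset$ via Remark~\ref{remark:Hnk}a), and then find a cylinder $U_N(\Delta(r))\cap\partial W\subseteq V_k'$ whose base progression actually meets $a\Z\setminus\cM_{S_n(a)}$ so that Proposition~\ref{prop:central-new} with $E_{n,a}=A_n\setminus\{a\}$ applies and yields $a\mid (y_F)_n-k$. I plan to produce such a cylinder by approximating a point of $U_n(\Delta(j))\cap V_k'$ by an interior point $h'\in\inn_{\partial W}(V_k)$ (using $V_k'=\overline{\inn_{\partial W}(V_k)}$) with $h'_n=j\bmod p_n$, choosing $N$ so large that $U_N(h')\cap\partial W\subseteq V_k'$ and setting $r=h'_N\in(j+p_n\Z)\cap\wcH_N$; the inclusion $\wcH_N\subseteq\wcH_n$ combined with a judicious selection of $h'$ guarantees $r\in a\Z\setminus\cM_{S_n(a)}$. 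Uniqueness when $a>2m$ is immediate: if $k_1,k_2\in K\subseteq[-m,m]$ both satisfy $a\mid (y_F)_n-k_i$, then $a\mid k_2-k_1$ with $|k_2-k_1|\le 2m<a$, forcing $k_1=k_2$.

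Parts (c)--(e) are then routine. For (c), an edge $\gcd(a,a')>2m$ in $\cG_n$ forces $\gcd(a,a')\mid\kappa_n(a')-\kappa_n(a)$ by part~(b), and since $|\kappa_n(a')-\kappa_n(a)|\le 2m<\gcd(a,a')$, we conclude $\kappa_n(a)=\kappa_n(a')$; transitivity extends this to each connected component. For (d), $K=\{k\}$ forces $V_k'=\partial W$, whence $\partial W+(y_F-\Delta(k))\subseteq\partial W$, and Lemma~\ref{lemma:basic-new}b) yields $\wq_n\mid(y_F)_n-k$. Finally for (e), if additionally $\wq_n=p_n$ for all $n$, then $(y_F)_n\equiv k\pmod{p_n}$ for every $n$, so $y_F=\Delta(k)$; by \cite[Lemma~2.4]{Donoso2015} this forces $F\in\sigma^k\langle\sigma\rangle=\langle\sigma\rangle$, and the centralizer is trivial.
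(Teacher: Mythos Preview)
Your arguments for (c), (d), (e) are correct and match the paper essentially verbatim.

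For (a), your route via Proposition~\ref{prop:central-new} plus a descent to small $n$ can be made to work, but the paper proceeds far more directly: for $k\in K$ take \emph{any single point} $h\in V_k'$ (no cylinder is needed); the invariance $V_k'+(y_F-\Delta(k))\Z\subseteq V_k'\subseteq\partial W$ from Proposition~\ref{prop:local-invariance-new-2} projects at level $n$ to $h_n+((y_F)_n-k)\Z\subseteq\wcH_n\subseteq\cM_{A_n}$, and then Lemma~\ref{finite_mult} yields some $a'\in A_n$ dividing $\gcd(h_n,(y_F)_n-k)$, in particular $(y_F)_n-k$. This handles every $n$ at once, with no special treatment of small $n$.

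For (b), your plan has a real gap. You choose $N$ \emph{large} so that the cylinder $U_N(h')\cap\partial W$ fits inside $V_k'$, and then want to invoke Proposition~\ref{prop:central-new} with $E_{n,a}=A_n\setminus\{a\}$. But that proposition requires $n\ge N$, and its hypothesis~\eqref{eq:to-check-1-new} reads $\sum_{a'\in A_n\setminus\{a\}}1/\varphi(a'^{\div a})<1/p_N$; both constraints force $N$ to be \emph{small} relative to the fixed $n$. For a given $n$ these two demands can be incompatible, and unlike in part (a) there is no evident descent---you would need to \emph{ascend} from $a\in A_n$ to some related element of $A_{n'}$ for large $n'$, which the abstract setup~\eqref{eq:additional-structure} does not supply. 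The paper sidesteps all of this with the same one-point argument as in (a): since $a\notin\cM_{S_n(a)}$ (otherwise $a\Z\subseteq\cM_{S_n(a)}$, contradicting~\eqref{eq:additional-structure}), one has $a\in\wcH_n$; Lemma~\ref{lemma:newG-2} produces $h\in\partial W$ with $h_n=a$; this $h$ lies in some $V_k'$, whence $a+((y_F)_n-k)\Z\subseteq\cM_{A_n}$; Lemma~\ref{finite_mult} then gives $a'\in A_n$ with $a'\mid\gcd(a,(y_F)_n-k)$, and primitivity of $A_n$ forces $a'=a$. No cylinder and no Proposition~\ref{prop:central-new} are needed.
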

\begin{proof}
a)\;
Let $k\in K$. Then $V_k'+(y_F-\Delta(k))\Z\subseteq V_k'\subseteq\partial W$ by Proposition~\ref{prop:local-invariance-new-2}, and because of Lemma~\ref{lemma:newG-2},
$h_n+(y_F-\Delta(k))_n\Z\subseteq\wcH_n\subseteq\cM_{A_n}$ for each $h\in V_k'$.
It follows that
there exists $a'\in A_n$ such that $a'\mid\gcd(h_{n},(y_F-\Delta(k))_{n})$.
\\
b)\;
Let $a\in A_n$. Notice that $a\not\in\cM_{S_n(a)}$. Otherwise, $a\Z\subseteq\cM_{S_n(a)}$ which contradicts \eqref{eq:additional-structure}. So $a\in\wcH_n$ and, by Lemma~\ref{lemma:newG-2}, there exists some $h\in U_{n}(\Delta(a))\cap \partial W$. In particular, $h_{n}=a\in\wcH_n$.
Because of Corollary~\ref{coro:new-1-new},
there exists $k\in K$ such that $h\in V_k'$.
As in the proof of a) it follows that
there exists $a'\in A_n$ such that $a'\mid\gcd(h_{n},(y_F-\Delta(k))_{n})$. As $a'$ and $a=h_n$ belong to the same primitive set $A_n$, this implies $a=a'\mid (y_F-\Delta(k))_{n}$.

Suppose there is another $k'\in K$ such that $a\mid (y_F-\Delta(k'))_{n}$. Then
$a\mid k-k'$, so that $k=k'$ or $a\leqslant |k-k'|\leqslant 2m$.\\
c)\;It suffices to prove that $\kappa_n(a)=\kappa_n(a')$ for every edge $(a,a')$ of $\cG_n$, i.e.~whenever $\gcd(a,a')>2m$. But as b) implies
\begin{equation*}
\gcd(a,a')\mid\left((y_F)_{n}-\kappa_n(a)\right)-\left((y_F)_{n}-\kappa_n(a')\right)=\kappa_n(a')-\kappa_n(a),
\end{equation*}
it follows that $\kappa_n(a)=\kappa_n(a')$ or
$2m<\gcd(a,a')\leqslant|\kappa_n(a)-\kappa_n(a')|\leqslant2m$.\\
d)\;If $K=\{k\}$, then $\partial W=V_k'$, i.e. $\partial W+(y_F-\Delta(k))\subseteq\partial W$, and the claim follows from Lemma~\ref{lemma:basic-new}b).\\
e)\; It follows from d) that $y_F=\Delta(k)$ for some $k\in\Z$.
\end{proof}
\section{The $\cB$-free case}\label{sec:Bfree}
In this section we will apply our results for general Toeplitz subshifts from section \ref{sec:abstract_Toeplitz} to minimal $\cB$-free subshifts.

\subsection{Preparations}\label{sec:topological-preparations}
Let us start with the following notation and observations.
\begin{compactitem}[-]
\item For a finite subset $S\subset\cB$ define as in \cite{KKL2016}
\begin{equation*}
\ell_S:=\lcm(S)\quad\text{and}\quad
\cA_S:=\{\gcd(b,\ell_S):b\in\cB\}.
\end{equation*}
As $\cB$ is primitive, $S$ is a proper subset of $\cA_S$.
\item The set $\cB$ is taut, if
$\delta(\cM_{\cB\setminus\{b\}})<\delta(\cM_\cB)$ for each $b\in\cB$.

So a set is primitive, if removing any single point from it changes its set of multiples, and a set is taut, if removing any single point from it changes the logaritmic density of its set of multiples.
\item Let $S\subseteq S' \subset\cB$. From \cite[Eq.~(17)]{KKL2016} we recall that
\begin{equation}\label{eq:A_S-inclusions}
S\subseteq S'\subseteq\cA_{S'}\subseteq\cM_{\cA_S}\text{\quad so that\quad}
\cM_S\subseteq\cM_{S'}\subseteq\cM_{\cA_{S'}}\subseteq\cM_{\cA_S}.
\end{equation}
\item A finite set $S\subset\cB$ is \emph{saturated}, if $\cA_S\cap\cB=S$.
\item For a finite set $S\subset\cB$ define $S^\sat=\cA_S\cap\cB$.
\end{compactitem}
Then $S\subseteq S^\sat$, $S^\sat$ is finite, $\lcm(S^\sat)\mid\lcm(\cA_S)=\lcm(S)$, so that $\lcm(S^\sat)=\lcm(S)$,  and $\cA_{S^\sat}=\cA_S$, because
$\gcd(b,\lcm(S^\sat))=\gcd(b,\lcm(S))$ for each $b\in\cB$. In particular, $\cA_{S^\sat}\cap\cB=\cA_S\cap\cB=S^\sat$, so that $S^\sat$ is saturated.

Any filtration $S_1\subseteq S_2\subseteq\ldots $ of $\cB$ by finite sets yields a period structure $p_n=\lcm(S_n)$ for $X_{\eta}$. The definition of the group $G$ depends on the period structure, but $G$ is naturally isomorphic with the inverse limit $\lim\limits_{\leftarrow}\Z/\lcm(S)\Z$ of the inverse system of cyclic groups $\Z/\lcm(S)\Z$ indexed by the finite subsets $S\subset\cB$ ordered by the inclusion. Moreover, there is an injective group homomorphism $\lim\limits_{\leftarrow}\Z/\lcm(S)\Z\rightarrow \prod_{b\in\cB}\Z/b\Z$
 given by $(n_S)_{S\subset\cB}\mapsto (n_{\{b\}}+b\Z)_{b\in \cB}$. We can identify the group $G$ with the image of this homomorphism, which consists of the elements $h=(h_b)_{b\in \cB}\in \prod_{b\in\cB}\Z/b\Z$ satisfying $h_b=h_{b'}\mod \gcd(b,b')$ for any $b,b'\in\cB$. Under this identification, $\Delta:\Z\to\prod_{b\in\cB}\Z/b\Z$ is given by
$(\Delta(n))_b=n+ b\Z$ for $b\in\cB$ and $\overline{\Delta(\Z)}\cong G$.
  Given a sequence $(n_S)_{S\subset\cB}$ of integers belonging to the inverse limit (that is,  satisfying $n_S\equiv n_{S'}\mod \lcm(S)$ whenever $S\subseteq S'$), we denote by $\lim\Delta(n_S)$ the element $h\in \overline{\Delta(\Z)}\subseteq\prod_{b\in\cB}\Z/b\Z$ such that $h_b= n_S\mod b$ for $b\in S\subset \cB$.
\begin{remark}\label{remark:coding-function}
The coding function $\phi:G\to\{0,1\}^\Z$ defined in the introduction can be written as $\phi(y)=\mathbf{1}_{\Z\setminus\bigcup_{b\in\cB}(b\Z-y_b)}$ for any $y=(y_b)_{b\in\cB}\in\overline{\Delta(\Z)}\cong G$. It is injective. Indeed, for $y\in G$ denote $I_y:=\{s\in\Z:(\phi(y))_s=1\}=\{s\in\Z:y+\Delta(s)\in W\}$.
As $\overline{\inn(W)}=W\subseteq\overline{\Delta(\Z)}$ (see the introduction), we have $W=\overline{\inn(W)}\subseteq\overline{\{y+\Delta(s):s\in I_y\}}\subseteq W$ for each $y\in G$. Hence, if $\phi(y)=\phi(y')$, then $I_y=I_{y'}$ and
\begin{equation*}
W=\overline{\{y+\Delta(s):s\in I_y\}}
=
\overline{\{y'+\Delta(s):s\in I_{y'}\}}+(y-y')
=
W+(y-y').
\end{equation*}
But $W$ is aperiodic \cite[Prop.~5.1]{KKL2016}, so $y=y'$.
\end{remark}

\begin{lemma}[{\cite[Lemma 2.5]{KKL2016}}]\label{lemma2.5}
Let $U = U_S(\Delta(n))$ for some $S\subset\cB$ and $n\in\Z$.
\begin{compactenum}[a)]
\item If $n\in\mathcal{M}_S$ , then $U\cap W =\emptyset$.
\item If $U\cap W=\emptyset$, then $n+\lcm(S)\cdot\Z\subseteq\mathcal{M}_{\cB\cap \cA_S}$.
\item If $S$ is saturated, then $n\in\mathcal{M}_S$ iff $U\cap W = \emptyset$ iff $n + lcm(S)\cdot\Z\subseteq\mathcal{M}_S$.
\end{compactenum}
\end{lemma}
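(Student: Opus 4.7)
The plan is to use the explicit description $W = \{y \in G : y_b \not\equiv 0 \pmod b \text{ for all } b \in \cB\}$ from Remark~\ref{remark:coding-function} as the central tool, together with density of $\Delta(\Z)$ in $G$ and the topological regularity $W = \overline{\inn(W)}$.

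Part (a) will be immediate from this description: if $b_0 \in S$ divides $n$, then every $g \in U = U_S(\Delta(n))$ satisfies $g_{b_0} = n \bmod b_0 = 0$, so $g \notin W$, whence $U \cap W = \emptyset$.

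For part (b), I first establish the equivalence
\[ U \cap W = \emptyset \iff n + \ell_S\Z \subseteq \cM_\cB, \]
where $\ell_S := \lcm(S)$. The forward direction follows by testing with $g = \Delta(m)$ for $m \in n + \ell_S\Z$: such $\Delta(m)$ lies in $U$, hence outside $W$, which forces $m \in \cM_\cB$. The reverse direction exploits that $U$ is open and $W = \overline{\inn(W)}$: if $U \cap W \neq \emptyset$ then $U \cap \inn(W)$ is a nonempty open set, and by density of $\Delta(\Z)$ it must contain some $\Delta(m)$ with $m \in n + \ell_S\Z \cap \cF_\cB$, contradicting the inclusion. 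Next I use primitivity of $\cB$ to identify $\cB \cap \cA_S = \cB_S := \{b \in \cB : b \mid \ell_S\}$: if $b = \gcd(b', \ell_S) \in \cB$ with $b' \in \cB$, then $b \mid b'$ forces $b = b'$, so $b \mid \ell_S$. With this, part (b) reduces to showing that $n + \ell_S\Z \subseteq \cM_\cB$ forces some $b \in \cB_S$ to divide $n$ (then the entire progression lies in $b\Z \subseteq \cM_{\cB_S}$). I expect this arithmetic step to be the main obstacle. My plan is a contradiction argument: assuming no $b \in \cB_S$ divides $n$, one uses compactness to extract a finite subcover $U \subseteq \bigcup_{i=1}^r U_{\{b_i\}}(\Delta(0))$ and combines it with density / Behrend-type estimates on sets of multiples to produce a $\cB$-free integer inside $n + \ell_S\Z$, contradicting the hypothesis. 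The key arithmetic observation is that when no element of $\cB_S$ divides $n$, only the $b \in \cB \setminus \cB_S$ contribute to the cover, each via a sub-progression of relative density $1/q_b$ with $q_b := b/\gcd(b,\ell_S) \geq 2$, leaving enough room to place a $\cB$-free point.

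Part (c) then follows as a direct corollary. For saturated $S$, $S = \cB \cap \cA_S = \cB_S$, so part (b) yields $U \cap W = \emptyset \Rightarrow n + \ell_S\Z \subseteq \cM_S$, which trivially implies $n \in \cM_S$ by specializing to one element of the progression. Conversely, $n \in \cM_S$ with witness $b \in S$ gives $n + \ell_S\Z \subseteq b\Z \subseteq \cM_S$ (since $b \mid \ell_S$), and by part (a) it also forces $U \cap W = \emptyset$, closing the chain of equivalences.
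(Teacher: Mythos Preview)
The paper does not give its own proof of Lemma~\ref{lemma2.5}; it is quoted from \cite{KKL2016}. So I evaluate your argument on its own merits and against the closely related material the paper \emph{does} prove (Proposition~\ref{des_holes} and Lemma~\ref{finite_mult}).

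Your treatment of part~(a) and the derivation of part~(c) from (a) and (b) are fine, as is your identification $\cB\cap\cA_S=\cB_S:=\{b\in\cB:b\mid\ell_S\}$ via primitivity and your reduction of part~(b) to the implication
\[
n+\ell_S\Z\subseteq\cM_{\cB'}\ \text{for some finite }\cB'\subseteq\cB\quad\Longrightarrow\quad\exists\,b\in\cB_S:\ b\mid n.
\]
The compactness step producing the finite cover is also correct.

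The gap is in the final arithmetic step. Your stated ``key observation'' --- that each $b\in\cB'\setminus\cB_S$ contributes a sub-progression of relative density $1/q_b$ with $q_b\ge 2$, ``leaving enough room'' --- is not a proof: finitely many such sub-progressions \emph{can} cover $n+\ell_S\Z$ even when all $q_b\ge 2$ (the sum $\sum 1/q_b$ may well exceed $1$). A density count, even a Behrend-type one, does not by itself give the contradiction.

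What actually works is the content of the paper's Lemma~\ref{finite_mult} (equivalently \cite[Prop.~4.31]{BKKL2015} for taut/finite sets). In your setup the clean way is: set $g=\gcd(n,\ell_S)$ and write $n+\ell_S\Z=g\,(n'+\ell'\Z)$ with $\gcd(n',\ell')=1$. Since no $b\in\cB'$ divides $g$ (otherwise $b\in\cB_S$ and $b\mid n$), every $b^{\div g}\ge 2$. For any $c\in(\cB')^{\div g}$ that meets $n'+\ell'\Z$ one has $\gcd(c,\ell')\mid\gcd(n',\ell')=1$, so all contributing moduli are coprime to $\ell'$; by CRT choose $m'\in n'+\ell'\Z$ with $m'\equiv 1\pmod{\lcm\text{(those }c\text{'s)}}$, hence $m'\in\cF_{(\cB')^{\div g}}$ and $gm'\in (n+\ell_S\Z)\setminus\cM_{\cB'}$, the desired contradiction. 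Replace your density remark by this CRT argument (or simply invoke Lemma~\ref{finite_mult}), and the proof goes through. Note also that for part~(b) you only need the forward direction $U\cap W=\emptyset\Rightarrow n+\ell_S\Z\subseteq\cM_\cB$; the reverse direction via topological regularity is unnecessary here (and regularity is not assumed in the statement of Lemma~\ref{lemma2.5}).
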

\begin{lemma}[{\cite[Lemma 3.1]{KKL2016}}]\label{lemma3.1}
\begin{compactenum}[a)]
\item For all $S\subset\cB$ and $n\in\Z$ we have: $U_S(\Delta(n))\subseteq W\Leftrightarrow n\in\cF_{\cA_S}$.
\item If $(S_k)_k$ is a filtration of $\cB$ by finite sets and $\lim_k \Delta(n_{S_k}) = h$, then $h\in \inn(W)$ if
and only if $n_{S_k}\in\cF_{\cA_{S_k}}$ for some $k$.
\end{compactenum}
\end{lemma}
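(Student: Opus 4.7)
My plan is to reduce both parts to a single arithmetic identity in $G$. Viewing $G$ as the subgroup of $\prod_{b\in\cB}\Z/b\Z$ of tuples $(y_b)_{b\in\cB}$ satisfying $y_b\equiv y_{b'}\mod\gcd(b,b')$, I would establish that for any finite $S\subset\cB$, any $n\in\Z$, any $y\in U_S(\Delta(n))$ and any $b\in\cB$,
\begin{equation*}
y_b\equiv n\mod a_b,\qquad\text{where }a_b:=\gcd(b,\lcm S)\in\cA_S.
\end{equation*}
This follows because $y_{b'}\equiv n\mod b'$ for every $b'\in S$ forces $y_b\equiv n\mod\gcd(b,b')$ for every $b'\in S$, and the distributivity identity $\gcd(b,\lcm S)=\lcm_{b'\in S}\gcd(b,b')$ consolidates these congruences. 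I would also invoke the characterization of $W$ that falls out of Remark~\ref{remark:coding-function}: since $\phi(y)_0=\mathbf{1}_W(y)$ and $\phi(y)=\mathbf{1}_{\Z\setminus\bigcup_b(b\Z-y_b)}$, one has $y\in W$ if and only if $b\nmid y_b$ for every $b\in\cB$.

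With these tools, part a) is essentially immediate. For $(\Leftarrow)$, take $n\in\cF_{\cA_S}$ and $y\in U_S(\Delta(n))$; if $b\mid y_b$ for some $b\in\cB$, then $a_b\mid y_b$, and the key identity gives $a_b\mid n$, contradicting $n\in\cF_{\cA_S}$ since $a_b\in\cA_S$. For $(\Rightarrow)$, I would argue by contraposition: suppose $n\in\cM_{\cA_S}$ and pick $b\in\cB$ with $a_b\mid n$; the two congruences $x\equiv n\mod\lcm S$ and $x\equiv 0\mod b$ are compatible (their obstruction is exactly $a_b$, which divides $n$), so CRT provides $n'\in\Z$ satisfying both. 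Then $\Delta(n')\in U_S(\Delta(n))$ while $b\mid(\Delta(n'))_b$, so $\Delta(n')\notin W$, proving $U_S(\Delta(n))\not\subseteq W$. Part b) is then a short topological corollary: the cylinders $\{U_{S_k}(h)\}_{k\ge 1}$ form a neighborhood basis at $h$ in $G$ (since $\lcm S_k\to\infty$ as the filtration exhausts $\cB$), so $h\in\inn(W)$ if and only if $U_{S_k}(h)\subseteq W$ for some $k$; the equality $U_{S_k}(h)=U_{S_k}(\Delta(n_{S_k}))$ (coming from $h=\lim_k\Delta(n_{S_k})$) lets me apply part a) level by level.

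The main obstacle is isolating the identity $y_b\equiv n\mod a_b$ uniformly for \emph{all} $b\in\cB$, not only $b\in S$; this is the step that genuinely uses the global compatibility structure of $G$ rather than the cylinder condition in isolation. Once this is in place, both directions of a) collapse to one-line verifications (the contrapositive direction's only nontrivial ingredient is the CRT-based construction of $n'$), and b) is purely a matter of the neighborhood basis.
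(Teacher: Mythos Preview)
Your proof is correct. The paper does not actually prove this lemma: it is quoted verbatim from \cite[Lemma~3.1]{KKL2016} and used as a black box, so there is no in-paper argument to compare against. Your approach---extracting the congruence $y_b\equiv n\bmod\gcd(b,\lcm S)$ from the compatibility constraints in $G\subseteq\prod_{b\in\cB}\Z/b\Z$ via the distributive identity $\gcd(b,\lcm S)=\lcm_{b'\in S}\gcd(b,b')$, combining it with the description $W=\{y:b\nmid y_b\text{ for all }b\in\cB\}$ from Remark~\ref{remark:coding-function}, and then using CRT for the contrapositive direction---is exactly the natural line of argument and matches in spirit what is done in \cite{KKL2016}. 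Part~b) is, as you say, an immediate consequence of the fact that the $U_{S_k}(h)$ form a neighbourhood basis at $h$, together with $U_{S_k}(h)=U_{S_k}(\Delta(n_{S_k}))$, which holds by the very definition of $h=\lim\Delta(n_{S_k})$ recalled just before Remark~\ref{remark:coding-function}.
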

\begin{lemma}[{\cite[Lemma 5.2]{KKL2016}}]\label{lemma5.2}
Assume that $S\subseteq S'$ are finite subsets of $\cB$, then $\cA_S = \{\gcd(a, \lcm(S))\ : a \in\cA_{S'} \}$.
\end{lemma}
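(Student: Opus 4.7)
The plan is to prove both inclusions directly from the definitions of $\cA_S$ and $\cA_{S'}$, using only the basic arithmetic fact that $\lcm(S)\mid\lcm(S')$ whenever $S\subseteq S'$, together with the associativity of $\gcd$.

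First I would record the key observation: since $S\subseteq S'$, we have $\lcm(S)\mid\lcm(S')$, and therefore for any integer $b$,
\[
\gcd\bigl(\gcd(b,\lcm(S')),\lcm(S)\bigr)
=\gcd(b,\lcm(S'),\lcm(S))
=\gcd(b,\lcm(S)),
\]
where the last equality uses $\lcm(S)\mid\lcm(S')$. This single identity does both directions of the set equality.

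For the inclusion $\cA_S\subseteq\{\gcd(a,\lcm(S)):a\in\cA_{S'}\}$, I would take $x\in\cA_S$, write $x=\gcd(b,\lcm(S))$ for some $b\in\cB$, and set $a:=\gcd(b,\lcm(S'))\in\cA_{S'}$. The displayed identity then yields $\gcd(a,\lcm(S))=\gcd(b,\lcm(S))=x$, so $x$ lies in the right-hand side. For the reverse inclusion, I would take $y=\gcd(a,\lcm(S))$ with $a=\gcd(b,\lcm(S'))\in\cA_{S'}$, and apply the same identity to conclude $y=\gcd(b,\lcm(S))\in\cA_S$.

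There is essentially no obstacle here: the lemma is a routine manipulation, and the only conceptual point is noticing that $\lcm(S)\mid\lcm(S')$ collapses the nested $\gcd$ with $\lcm(S')$. I expect the whole proof to fit in a few lines, and no use of tautness, primitivity, or the topological setup of $W$ is needed.
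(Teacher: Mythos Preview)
Your proof is correct. The paper does not give its own proof of this lemma; it simply cites it from \cite{KKL2016}. Your argument via the identity $\gcd(\gcd(b,\ell_{S'}),\ell_S)=\gcd(b,\ell_S)$ (using $\ell_S\mid\ell_{S'}$) is the natural one and is presumably what the original reference does as well.
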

\begin{proposition}[{\cite[Theorem B]{KKL2016}}]\label{prop:thmB}
The following are equivalent:
\begin{compactenum}[a)]
\item $W$ is topologically regular, i.e. $W = \overline{\inn(W)}$.
\item There are no $d\in\N$ and no infinite pairwise coprime set $\cA\subset\N\setminus\{1\}$ such that $d\cA\subset\cB$.
\item  $\eta=\phi(0)$ is a Toeplitz sequence different from $(\ldots, 0, 0, 0,\ldots)$.
\item $\{n\in\N \ : \forall_{S \subset\cB}\  \exists_{S'\subset\cB}\ :S \subseteq S' \ \text{ and } \ n\in\cA_{S'}\setminus S'\}=\emptyset$.
\end{compactenum}
\end{proposition}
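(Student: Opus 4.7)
I would prove the equivalences through the cycle $(c)\Rightarrow(a)\Leftrightarrow(d)\Leftrightarrow(b)\Rightarrow(c)$, with $(d)$ serving as an arithmetic bridge between the topological condition $(a)$ and the structural condition $(b)$. Throughout, every finite $S\subset\cB$ may be replaced by its saturation $S^\sat=\cA_S\cap\cB$, which preserves $\lcm(S)$ and $\cA_S$ while activating the biconditionals in Lemmas~\ref{lemma2.5}c) and~\ref{lemma3.1}a).

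\textbf{The equivalence $(a)\Leftrightarrow(d)$.} The saturated lemmas yield: $\neg(a)$ iff there exist saturated $S_0\subset\cB$ and $n_0\in[0,\lcm(S_0))$ with $n_0\notin\cM_{S_0}$ and $n_0+\lcm(S_0)\Z\subseteq\cM_{\cA_{S'}}$ for every saturated $S'\supseteq S_0$. Since each element of the progression has only finitely many divisors, pigeonhole produces a fixed $n$ dividing some $n_0+j\lcm(S_0)$ with $n\in\cA_{S'}$ cofinally in $S'$. Any witness of $\neg(d)$ automatically lies outside $\cB$ (otherwise the choice $S=\{n\}$ defeats the defining quantifier), and by suitably selecting the shift $j$ together with Lemma~\ref{lemma5.2} one arranges $n\notin\cB$, producing $\neg(d)$. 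Conversely, given $n$ witnessing $\neg(d)$ and any saturated $S_0$ with $n\notin\cM_{S_0}$, the $\lcm(S')$-periodicity of $\cM_{\cA_{S'}}$ propagates $n\in\cA_{S'}$ to the full progression $n+\lcm(S_0)\Z\subseteq\cM_{\cA_{S'}}$ for cofinally many $S'$, so $U_{S_0}(\Delta(n))$ meets $W$ but not $\inn(W)$.

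\textbf{The equivalence $(d)\Leftrightarrow(b)$.} Assume $\neg(b)$: $d\cA\subseteq\cB$ with $\cA\subset\N\setminus\{1\}$ infinite and pairwise coprime. Primitivity of $\cB$ forces $d\notin\cB$. For any finite $S\subset\cB$, pairwise coprimality implies cofinitely many $a\in\cA$ are coprime to $d\lcm(S)$; pick distinct such $a_1,a_2$, set $S':=S\cup\{da_1\}$, and compute via Lemma~\ref{lemma5.2} that $\gcd(da_2,\lcm(S'))=d$, so $d\in\cA_{S'}\setminus S'$ and $n:=d$ witnesses $\neg(d)$. Conversely, given $n$ witnessing $\neg(d)$ (so $n\notin\cB$), build inductively an infinite pairwise coprime family $\{c_k\}_{k\ge 1}\subset\N\setminus\{1\}$ with $nc_k\in\cB$. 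At stage $k$, apply $\neg(d)$ to $S:=\{nc_1,\ldots,nc_k\}$ to obtain $S'\supseteq S$ and $b\in\cB$ with $\gcd(b,\lcm(S'))=n$; set $c_{k+1}:=b/n$. Then $c_{k+1}\neq 1$ (else $n=b\in\cB$) and $\gcd(c_{k+1},c_i)=1$ for $i\le k$ since each $c_i$ divides $\lcm(S')/n$.

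\textbf{The equivalence $(c)\Leftrightarrow(a)$ and main obstacle.} Nontriviality of $\eta$ is automatic because $\pm 1\in\cF_\cB$. By Lemma~\ref{lemma3.1}a), a position $k$ is periodic in $\eta$ iff $\Delta(k)\in\inn(W)$, so $(c)$ amounts to $\Delta(\cF_\cB)\subseteq\inn(W)$. Under $(c)$, one verifies $V^1=\inn(W)$: any cylinder $U_S(\Delta(m))\subseteq W$ is in some $U_{S_n}(\Delta(k))$ with $k\in\cF_{\cA_{S_n}}$, because the inclusion $\cF_{\cA_S}\subseteq\cF_{\cA_{S_n}}$ holds for $S_n\supseteq S$ via Lemma~\ref{lemma5.2}. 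Combining this with the density of $\Delta(\cF_\cB)$ in $W$ (which follows from $\Delta(\Z)$ being dense in $G$ together with the Toeplitz-induced coverage of $W$ by cylinders centered at $\cF_\cB$-points) gives $W=\overline{\Delta(\cF_\cB)}\subseteq\overline{\inn(W)}$, hence $(a)$. For $(a)\Rightarrow(c)$, any $k\in\cF_\cB$ with $\Delta(k)\in\partial W$ would satisfy $k\in\bigcap_S\cM_{\cA_S}$, and the same pigeonhole-plus-coprime-extraction used in $(d)\Rightarrow(b)$ yields an infinite pairwise coprime family witnessing $\neg(b)$, hence $\neg(a)$, a contradiction. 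The most delicate step is the extraction in $(a)\Rightarrow(d)$: producing a divisor $n$ that both lies in $\cA_{S'}$ for cofinally many saturated $S'$ and strictly outside $\cB$ requires a careful interplay between Lemma~\ref{lemma5.2} and the choice of shift $j$ within $n_0+\lcm(S_0)\Z$, since $\cB$ must be avoided without losing cofinal membership in $\cA_{S'}$.
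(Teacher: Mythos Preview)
The present paper does not prove this proposition; it is quoted verbatim as Theorem~B of \cite{KKL2016} and used as a black box, so there is no argument here to compare your proposal against.

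That said, your sketch is along the right lines and close to how the original argument in \cite{KKL2016} runs. The equivalence $(b)\Leftrightarrow(d)$ is correct as written: the inductive extraction of a pairwise coprime family $\{c_k\}$ from a persistent $n\in\cA_{S'}\setminus S'$ works, and the observation that any such $n$ must lie outside $\cB$ (else $S=\{n\}$ defeats the quantifier) is exactly the point. Two remarks on the rest. First, your announced cycle ends with $(b)\Rightarrow(c)$, which you never actually address; instead you argue $(a)\Rightarrow(c)$, so the logic closes via $(c)\Leftrightarrow(a)$, $(a)\Leftrightarrow(d)$, $(d)\Leftrightarrow(b)$, but the write-up should match the strategy you declare. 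Second, the step you flag as delicate --- extracting from $\neg(a)$ a single $n$ that lies in $\cA_{S'}\setminus S'$ cofinally --- becomes clean if you route it through $(c)$ rather than attacking it directly. From $U_{S_0}(\Delta(n_0))\cap\inn(W)=\emptyset$ and $n_0\notin\cM_{S_0}$ with $S_0$ saturated, Proposition~\ref{des_holes}a) yields an integer $k\in(n_0+\ell_{S_0}\Z)\cap\cF_\cB$; then $k\in\cM_{\cA_{S'}}$ for \emph{every} finite $S'\subset\cB$ (using $\cM_{\cA_{S''}}\subseteq\cM_{\cA_{S'}}$ for $S'\subseteq S''$), so $k$ is a non-periodic position and $\neg(c)$ holds. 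Now pigeonhole on the finitely many divisors of the fixed integer $k$: if no divisor $n\mid k$ were cofinally in $\cA_{S'}$, the finite union of the blocking sets would give an $S^*$ with $k\notin\cM_{\cA_{S^*}}$, a contradiction; and any surviving $n$ is automatically outside $\cB$ because $k\in\cF_\cB$. This sidesteps entirely the issue you raise about varying the shift $j$ along the progression.
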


\begin{lemma}
Each filtration $S_1\subset S_2\subset\dots\nearrow\cB$ has a sub-filtration of sets $S_{n_k}$ such that $S_{n_1}^\sat\subset S_{n_2}^\sat\subset\dots\nearrow\cB$ is a filtration.
\end{lemma}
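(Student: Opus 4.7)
The plan is to construct the subsequence $(n_k)$ greedily, using the fact that any finite subset of $\cB$ is eventually swallowed by the filtration. The key lemma I would prove first is a conditional monotonicity of the saturation operation: \emph{if $S\subseteq S'$ are finite subsets of $\cB$ and additionally $S^\sat\subseteq S'$, then $S^\sat\subseteq(S')^\sat$.} This is not completely automatic, since the map $S\mapsto\cA_S$ is not monotone in general (recall $\cA_{S'}\subseteq\cM_{\cA_S}$ from \eqref{eq:A_S-inclusions}, and the reverse-looking identity from Lemma~\ref{lemma5.2}). The proof of the lemma is a one-liner though: any $b\in S^\sat$ lies in $\cB$ by definition and, by the extra assumption, in $S'$; hence $b\mid\lcm(S')$, so $b=\gcd(b,\lcm(S'))\in\cA_{S'}$, which gives $b\in\cA_{S'}\cap\cB=(S')^\sat$.

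Given this, I would build $(n_k)$ recursively. Put $n_1:=1$. Assuming $n_k$ has been chosen, the set $S_{n_k}^\sat$ is finite; since $S_j\nearrow\cB$ exhausts $\cB$ as $j\to\infty$, there exists some $n_{k+1}>n_k$ with $S_{n_k}^\sat\subseteq S_{n_{k+1}}$. The monotonicity lemma then forces $S_{n_k}^\sat\subseteq S_{n_{k+1}}^\sat$.

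To see that $\bigcup_k S_{n_k}^\sat=\cB$, take any $b\in\cB$. Then $b\in S_j$ for some $j$, and choosing $k$ with $n_k\ge j$ yields $b\in S_{n_k}\subseteq S_{n_k}^\sat$. The opposite inclusion $S_{n_k}^\sat\subseteq\cB$ is clear from the definition $S^\sat=\cA_S\cap\cB$. Finally, if strict inclusions are required (as suggested by the notation ``$\subset$''), I would pass to a further subsequence skipping any repeated terms; if $\cB$ is finite this sequence stabilizes at $\cB$ and the claim holds after finitely many terms, while if $\cB$ is infinite the sets $S_{n_k}^\sat$ cannot all eventually coincide (else their union, which equals $\cB$, would be finite), so infinitely many strict inclusions occur and extracting them gives the desired filtration.

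I do not expect a serious obstacle here; the only point that needs care is the conditional monotonicity of the saturation, which is why I would isolate it as a preliminary step before running the recursion.
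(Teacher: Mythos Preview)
Your proof is correct and follows essentially the same route as the paper's: choose $n_1=1$ and then recursively pick $n_{k+1}$ large enough that $S_{n_k}^\sat\subseteq S_{n_{k+1}}$; the paper's proof is just the one-line recursion $n_{k+1}=\min\{j\in\N:S_{n_k}^\sat\subseteq S_j\}$ with the monotonicity step left implicit. Your ``conditional monotonicity'' lemma is slightly over-stated (the hypothesis $S\subseteq S'$ is never used, and the content reduces to the trivial fact $S'\subseteq (S')^\sat$ for any finite $S'\subseteq\cB$), but this does no harm, and your explicit treatment of strict inclusions and the exhaustion $\bigcup_k S_{n_k}^\sat=\cB$ is more careful than the paper's terse version.
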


\begin{proof}
Let $n_1=1$. If $n_1<n_2<\dots<n_k$ are chosen, let $n_{k+1}=\min\{j\in\N: S_{n_k}^\sat\subseteq S_j\}$.
\end{proof}
\noindent
This lemma allows in the sequel to assume that a filtration is saturated without loosing generality.
\subsection{Sets of holes}

Now we describe the set of holes and the set of essential holes.

\begin{proposition}\label{des_holes}
Let $S \subset \mathcal{B}$ be saturated and taut and $s\in\Z$. Then
\begin{compactenum}[a)]
\item $s\in\cM_S$ $\Leftrightarrow$ $s+\ell_S\Z\subseteq\cM_S$ $\Leftrightarrow$ $s+\ell_S\Z\subseteq\cM_\cB$,
\item $s\in\cF_{\cA_S}$ $\Leftrightarrow$ $s+\ell_S\Z\subseteq\cF_{\cA_S}$  $\Leftrightarrow$ $s+\ell_S\Z\subseteq\cF_{\cB}$.
\end{compactenum}
In particular,
$s \in \Z$ is not
$\ell_S$-periodic if and only if $s \in \mathcal{M}_{\cA_{S}} \setminus \mathcal{M}_{S}$. Hence $\mathcal{M}_{\cA_{S_n}} \setminus \mathcal{M}_{S_n}$ is the set of all holes\footnote{With respect to the period structure given by $p_n=\lcm(S_n)$.} in $\eta$ on level $n$. (Observe also that if $\cB$ is primitive and $\eta$ is a Toeplitz sequence, then $\cB$ is taut, see \cite[Lemma~3.7]{KKL2016}.)
\end{proposition}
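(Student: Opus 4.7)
The plan is to reduce part (a) to Lemma~\ref{lemma2.5}(c) combined with one short topological density argument, and to handle part (b) by elementary arithmetic (essentially the Chinese remainder theorem on finite cyclic groups). The identification of the set of holes then follows at once by combining (a) and (b).

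For part (a), the first equivalence $s\in\cM_S\Leftrightarrow s+\ell_S\Z\subseteq\cM_S$ is exactly Lemma~\ref{lemma2.5}(c), and the implication $s+\ell_S\Z\subseteq\cM_S\Rightarrow s+\ell_S\Z\subseteq\cM_\cB$ is trivial since $S\subseteq\cB$. The only nontrivial direction is $s+\ell_S\Z\subseteq\cM_\cB\Rightarrow s\in\cM_S$, which I would argue as follows. Each $\Delta(s+k\ell_S)$ lies in the open set $V^0$ by the hypothesis together with Remark~\ref{remark:coding-function}. Since $\Delta(s+\ell_S\Z)$ is dense in the clopen cylinder $U_S(\Delta(s))$ (any element of $U_S(\Delta(s))$ can be approximated by $\Delta(k)$ with $k\in s+\ell_S\Z$), this gives $U_S(\Delta(s))\subseteq\overline{V^0}=G\setminus V^1$, hence $U_S(\Delta(s))\cap V^1=\emptyset$. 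Now $W=\overline{V^1}$ by topological regularity (which holds since $\eta$ is Toeplitz, Proposition~\ref{prop:thmB}), so $V^1$ is dense in $W$. But $U_S(\Delta(s))\cap W$ is open in $W$ (in the subspace topology) and disjoint from the dense set $V^1$, so it is empty. Lemma~\ref{lemma2.5}(c) then gives $s\in\cM_S$.

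For part (b), every $a\in\cA_S$ divides $\ell_S$, so $\cM_{\cA_S}$ and hence $\cF_{\cA_S}$ is $\ell_S$-periodic, yielding the first equivalence. For the second equivalence, the forward direction reduces to $\cM_\cB\subseteq\cM_{\cA_S}$, clear since $b\in\gcd(b,\ell_S)\Z$ and $\gcd(b,\ell_S)\in\cA_S$ for every $b\in\cB$. For the reverse, I argue contrapositively: if $s\in\cM_{\cA_S}$, pick $b\in\cB$ with $a:=\gcd(b,\ell_S)$ dividing $s$ and write $b=ab'$, $\ell_S=a\ell'$ with $\gcd(b',\ell')=1$. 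Then $b\mid s+k\ell_S$ is equivalent to $b'\mid (s/a)+k\ell'$, and coprimality yields some $k\in\Z$ solving this congruence, so $s+k\ell_S\in\cM_\cB$.

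For the concluding statement, a position $s$ is $\ell_S$-periodic for $\eta$ iff $\eta_{|s+\ell_S\Z}$ is constant, iff $s+\ell_S\Z\subseteq\cM_\cB$ or $s+\ell_S\Z\subseteq\cF_\cB$, which by (a) and (b) amounts to $s\in\cM_S\cup\cF_{\cA_S}$. Taking complements in $\Z$ and using $\cM_S\subseteq\cM_{\cA_S}$ identifies the set of non-$\ell_S$-periodic positions as $\cM_{\cA_S}\setminus\cM_S$, and specialising to the filtration gives $\cH_n=\cM_{\cA_{S_n}}\setminus\cM_{S_n}$. The main obstacle lies in part (a): tautness of $\cB$, equivalent to the Toeplitz property of $\eta$, is precisely what makes Proposition~\ref{prop:thmB} applicable and guarantees $W=\overline{V^1}$, thereby turning a potentially delicate arithmetic coverage question into the short density argument above.
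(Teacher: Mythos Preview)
Your argument for part (b) and for the concluding identification of the set of holes is essentially the same as the paper's, just phrased slightly differently (the paper's steps (iii) and (iv) are exactly your periodicity and Chinese-remainder arguments).

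For the nontrivial implication in part (a), namely $s+\ell_S\Z\subseteq\cM_\cB\Rightarrow s\in\cM_S$, you take a genuinely different route. The paper argues arithmetically: tautness of $\cB$ (not of $S$ --- the hypothesis is somewhat loosely stated) together with \cite[Proposition~4.31]{BKKL2015} forces some $b\in\cB$ to divide $\gcd(s,\ell_S)$, and saturation of $S$ then yields $b\in S$. Your topological density argument via Lemma~\ref{lemma2.5}(c) is valid, but it rests on the Toeplitz property of $\eta$ (the standing assumption of Section~\ref{sec:Bfree}) rather than on tautness directly. That is fine in context since Toeplitz implies taut for primitive $\cB$, but your closing sentence is incorrect: tautness is strictly weaker than the Toeplitz property (e.g.\ Erd\H{o}s $\cB$-free systems are taut but not Toeplitz), so the two are not equivalent. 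Your argument also tacitly identifies the window $W=\overline{V^1}$ of Section~\ref{sec:abstract_Toeplitz} with the $\cB$-free window of \cite{KKL2016} appearing in Lemma~\ref{lemma2.5}; this identification holds, but making it precise uses $\cP_n^1=\cF_{\cA_{S_n}}$ (hence $V^1=\inn(W)$) together with Proposition~\ref{prop:thmB} --- and that equality is part~(b). So in your approach part~(b) should logically precede part~(a). The paper's purely arithmetic proof avoids this dependence on the window machinery altogether.
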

\begin{proof}
(i)\; Let $s\in\cM_S$.
Then $b \mid s$ for some $b \in S$. Since $b \mid \ell_{S}$, $s+\ell_{S} \mathbb{Z} \subseteq b \mathbb{Z} \subseteq \mathcal{M}_{S}$. \\
(ii)\; Let $s+\ell_S\Z\subseteq\cM_\cB$. By \cite[Proposition~4.31]{BKKL2015} the tautness of $\cB$ implies $b \mid \gcd(s,\ell_S)$ for some $b \in\cB$. Since $S$ is saturated, $b\in S$. So $s \in \mathcal{M}_{S}$.\\
(iii)\; Let $s\in\cF_{\cA_S}$. Assume for a contradiction that $\gcd(b,\ell_S) \mid s+\ell_{S} k$ for some $k \in \Z$ and some $b \in \cB$. Since
$\gcd(b,\ell_S) \mid \ell_{S}$, $\gcd\left(b, \ell_{S}\right) \mid s$, which contradicts $s\in\cF_{\cA_S}$. So $s+\ell_S\Z\subseteq\cF_{\cA_S}$.\\
(iv)\; Let $s+\ell_S\Z\subseteq\cF_\cB$. Assume for a contradiction that $\gcd(b,\ell_S)\mid s$ for some $b\in\cB$. Then there is $x\in\Z$ such that $x\equiv0\mod b$ and $x\equiv s\mod\ell_S$, i.e.~$x\in b\Z\cap(s+\ell_S\Z)$, in contradiction to $s+\ell_S\Z\subseteq\cF_\cB$. Hence $s\in\cF_{\cA_S}$.

Now a) follows from (i) and (ii), while b) follows from (iii) and (iv).
\end{proof}
To describe the set of essential holes we extract special elements of $\cA_S$.
\begin{definition} Let $S\subset\cB$.
\begin{compactenum}[a)]
\item An element $b\in\cB\setminus S$ is a \emph{source} of an element $a\in\cA_S$ if $a=\gcd(b,\lcm(S))$.
\item
$\cA_S^\infty:=\left\{a\in\cA_S: a\text{ has infinitely many sources}\right\}$, and $\cA_S^{\infty,p}:=(\cA_S^\infty)^{prim}$.
\end{compactenum}
\end{definition}
We will use some basic properties of $\cA_S^\infty$.
\begin{lemma}\label{lemma:A_S-infty}
\begin{compactenum}[a)]
\item $\cA_S^{\infty,p}\subseteq\cA_S^\infty\subseteq\cA_S\setminus \cM_S$.
\item Let $S\subset S'\subset\cB$ and $a\in\cA_S^\infty$. There exists at least one $a'\in \cA_{S'}^\infty$ such that $a=\gcd(a',\ell_S)$.
\item Let $S\subset S'\subset\cB$ and $a'\in\cA_{S'}^\infty$. Then $\gcd(\ell_S,a')\in \cA^{\infty}_S$.
\item Let $S\subset S'\subset\cB$ and $a\in\cA_S^{\infty,p}$. There exists at least one $a'\in \cA_{S'}^{\infty,p}$ such that $a=\gcd(a',\ell_S)$. In particular $\card{\cA_{S'}^{\infty,p}}\geqslant\card{\cA_S^{\infty,p}}$.
\item Let $S\subset S'\subset\cB$ with $\card{\cA_{S'}^{\infty,p}}=\card{\cA_S^{\infty,p}}$ and $a'\in\cA_{S'}^{\infty,p}$. Then $\gcd(a',\ell_S)\in\cA_S^{\infty,p}$. (Without the extra assumption this need not hold, see Example~\ref{ex:not-always-GH}.)
\end{compactenum}
\end{lemma}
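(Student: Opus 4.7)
The key identity I would use throughout is the transitivity relation $\gcd(b, \ell_S) = \gcd(\gcd(b, \ell_{S'}), \ell_S)$, which holds whenever $S \subseteq S' \subset \cB$ because $\ell_S \mid \ell_{S'}$. Combined with Lemma~\ref{lemma5.2}, the primitivity of $\cB$, and the finiteness of each $\cA_S$ (its elements are divisors of $\ell_S$), this yields all five parts via short arguments. For part~a), the inclusion $\cA_S^{\infty,p} \subseteq \cA_S^\infty$ is tautological; for the second inclusion, if $a \in \cA_S^\infty \cap \cM_S$, then any source $b \in \cB \setminus S$ of $a$ satisfies $a = \gcd(b, \ell_S) \in \cM_S$, so some $s \in S \subseteq \cB$ divides $a$ and hence $b$, and primitivity of $\cB$ forces $s = b$, contradicting $b \notin S$.

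For part~b), given $a \in \cA_S^\infty$ let $\cB_a := \{b \in \cB \setminus S' : \gcd(b, \ell_S) = a\}$; this remains infinite since the set of sources of $a$ in $\cB \setminus S$ is infinite while $S' \setminus S$ is finite. Pigeonhole applied to the finite set $\{\gcd(b, \ell_{S'}) : b \in \cB_a\} \subseteq \cA_{S'}$ produces $a' \in \cA_{S'}$ attained by infinitely many $b \in \cB_a$, so $a' \in \cA_{S'}^\infty$, and the transitivity of $\gcd$ gives $\gcd(a', \ell_S) = a$. Part~c) is a direct computation in the same spirit: each source $b \in \cB \setminus S' \subseteq \cB \setminus S$ of $a'$ also satisfies $\gcd(b, \ell_S) = \gcd(\gcd(b,\ell_{S'}),\ell_S) = \gcd(a', \ell_S)$, witnessing $\gcd(a', \ell_S) \in \cA_S^\infty$.

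For part~d), I would start from some $a' \in \cA_{S'}^\infty$ with $\gcd(a', \ell_S) = a$ produced by~b), and refine it to a divisibility-minimal such element within $\cA_{S'}^\infty$ (possible since $\cA_{S'}$ is finite). Any proper divisor $a'' \in \cA_{S'}^\infty$ of this minimum would, by~c), satisfy $\gcd(a'', \ell_S) \in \cA_S^\infty$ and $\gcd(a'', \ell_S) \mid a$, so the primitivity of $a$ inside $\cA_S^\infty$ would force $\gcd(a'', \ell_S) = a$, contradicting minimality. Hence the refined $a'$ lies in $\cA_{S'}^{\infty,p}$, and the resulting selection map $\Phi : \cA_S^{\infty,p} \to \cA_{S'}^{\infty,p}$ is injective, since $\gcd(\Phi(a), \ell_S) = a$ recovers $a$; this yields the cardinality inequality. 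Part~e) then follows immediately: if $\card{\cA_{S'}^{\infty,p}} = \card{\cA_S^{\infty,p}}$ (finite), the injection $\Phi$ is a bijection, so any $a' \in \cA_{S'}^{\infty,p}$ equals $\Phi(a)$ for a unique $a$, giving $\gcd(a', \ell_S) = a \in \cA_S^{\infty,p}$.

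The only genuine subtlety I anticipate is in part~d): at the minimality step one must invoke primitivity of $a$ \emph{within $\cA_S^\infty$} rather than merely in $\cA_S$, so the argument would fail without first having established~c). Everything else reduces to careful bookkeeping with the $\gcd$-transitivity identity.
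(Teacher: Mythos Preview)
Your proposal is correct and follows essentially the same approach as the paper's proof. The paper's argument for d) picks a primitive divisor $a'_0\in\cA_{S'}^{\infty,p}$ of the $a'$ produced in b) and shows $\gcd(a'_0,\ell_S)=a$ via c) and the primitivity of $a$; your ``refine to a divisibility-minimal such element'' is the same step in different words, and your treatment of e) via the bijectivity of $\Phi$ just spells out what the paper compresses into ``This follows from d).''
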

\begin{proof}
a)\; Let $a\in\cA_S^\infty$. If $a=\gcd(b,\ell_S)\in \cM_S$ for some $b\in\cB$, then
there is $b'\in S$ such that $b'\mid a\mid b$, and the primitivity of $\cB$ implies $b=a=b'\in S$. This contradicts to $a$ having infinitely many sources.
\\
b)\; Let $a\in\cA_S^\infty$. There are infinitely many $b\in\cB\setminus S'$ such that $a=\gcd(b,\ell_S)$. Let $a_b':=\gcd(b,\ell_{S'})$ for these $b$. Then $\gcd(a'_b,\ell_S)=\gcd(b,\ell_S)=a$ for all these $b$, and as $\cA_{S'}$ is finite, there exists some $a'\in\cA_{S'}$ such that $a'=a'_b$ for infinitely many of them. Hence $a'\in\cA_{S'}^\infty$.\\
c)\; Clear. \\
d)\; In the situation of b) suppose that $a\in\cA_S^{\infty,p}$ and that there is $a'_0\in\cA_{S'}^{\infty,p}$ such that $a'_0\mid a'$. Then $a_0:=\gcd(a'_0,\ell_S)\in \cA_S^\infty$ and $a_0\mid\gcd(a',\ell_S)=a$. As $a\in\cA_S^{\infty,p}$, this implies $a_0=a$, so that $\gcd(a_0',\ell_S)=a$.\\
e)\; This follows from d).
\end{proof}
\begin{lemma}\label{lemma:A_S-infty-boundary}
Suppose that $S_1\subseteq S_2\subseteq\dots\nearrow\cB$ is a filtration by saturated sets. Then, for each $N\in\N$,
\begin{equation}\label{eq:A_S-boundary-infty-1}
r\in\Z\ ,\; U_{S_N}(\Delta(r))\cap\partial W\neq\emptyset
\quad\Rightarrow\quad
r\in\cM_{\cA_{S_N}^\infty}\setminus\cM_{S_N}
\end{equation}
and, for all $r\in\Z$,
\begin{equation}\label{eq:A_S-boundary-infty-2}
\begin{split}
U_{S_N}(\Delta(r))\cap\partial W\neq\emptyset
\quad\Leftrightarrow\quad
\forall
n\geqslant N: (r+\ell_{S_N}\Z)\cap\left(\cM_{\cA_{S_n}^\infty}\setminus\cM_{S_n}\right)\neq\emptyset.
\end{split}
\end{equation}
More precisely, if $h\in U_{S_N}(\Delta(r))\cap\partial W$, then
$h_{S_n}\in (r+\ell_{S_N}\Z)\cap\left(\cM_{\cA_{S_n}^\infty}\setminus\cM_{S_n}\right)$ for all $n\geqslant N$.
\end{lemma}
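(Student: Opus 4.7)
The plan is to separate the lemma into two parts: the first implication \eqref{eq:A_S-boundary-infty-1}, and the equivalence together with the ``more precisely'' addendum in \eqref{eq:A_S-boundary-infty-2}. The real content of \eqref{eq:A_S-boundary-infty-1} is the strengthening from the routine $r\in\cM_{\cA_{S_N}}\setminus\cM_{S_N}$ (which follows from Lemma~\ref{lemma:newG-1} together with Proposition~\ref{des_holes}) to the a priori smaller set $\cA_{S_N}^\infty$. Once \eqref{eq:A_S-boundary-infty-1} is established, the forward direction of \eqref{eq:A_S-boundary-infty-2} and the ``more precisely'' statement follow at once by applying \eqref{eq:A_S-boundary-infty-1} at every level $n\geqslant N$, while the reverse direction is a standard compactness argument modelled on the proof of Lemma~\ref{lemma:newG-2}a).

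For \eqref{eq:A_S-boundary-infty-1} I would pick $h\in\partial W\cap U_{S_N}(\Delta(r))$ and, for each $n\geqslant N$, choose some $a_n\in\cA_{S_n}$ with $a_n\mid h_{S_n}$. Since $h_{S_n}\not\in\cM_{S_n}$, one cannot have $a_n\in S_n$, and therefore $a_n=\gcd(b_n,\ell_{S_n})$ for some $b_n\in\cB\setminus S_n$ (otherwise $\gcd(b_n,\ell_{S_n})=b_n\in S_n$). Projecting to level $N$, set $\tilde a_n:=\gcd(a_n,\ell_{S_N})=\gcd(b_n,\ell_{S_N})\in\cA_{S_N}$. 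Because $\tilde a_n\mid\ell_{S_N}$ and $\tilde a_n\mid h_{S_n}\equiv r\mod\ell_{S_N}$, it follows that $\tilde a_n\mid r$, and $b_n\in\cB\setminus S_n\subseteq\cB\setminus S_N$ is a source of $\tilde a_n$ relative to $S_N$. Finiteness of $\cA_{S_N}$ yields some $a\in\cA_{S_N}$ with $\tilde a_n=a$ for infinitely many $n$. The decisive step is that along this subsequence the sources $b_n$ must take infinitely many different values: if some fixed $b\in\cB$ equalled $b_n$ for infinitely many $n$, then $b\in\cB\setminus S_n$ for infinitely many $n$, contradicting $S_n\nearrow\cB$. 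Hence $a\in\cA_{S_N}^\infty$ and $a\mid r$, giving $r\in\cM_{\cA_{S_N}^\infty}\setminus\cM_{S_N}$.

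For \eqref{eq:A_S-boundary-infty-2}, the forward direction together with the ``more precisely'' assertion is then immediate: applying \eqref{eq:A_S-boundary-infty-1} to any $h\in\partial W\cap U_{S_N}(\Delta(r))$ at level $n$ (with $r$ replaced by $h_{S_n}$) gives $h_{S_n}\in\cM_{\cA_{S_n}^\infty}\setminus\cM_{S_n}$, while $h_{S_n}\equiv r\mod\ell_{S_N}$ is automatic. For the converse, I would pick $r_n\in(r+\ell_{S_N}\Z)\cap(\cM_{\cA_{S_n}^\infty}\setminus\cM_{S_n})\subseteq(r+\ell_{S_N}\Z)\cap\cH_n$ for each $n\geqslant N$, form $h^{(n)}:=\Delta(r_n)$, and extract a limit point $h$ in the compact group $G$. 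Using $\cH_n\subseteq\cH_m$ for $m\leqslant n$ together with the $\ell_{S_m}$-periodicity of $\cH_m$, one gets $h_{S_m}\in\cH_m$ for every $m$, hence $h\in\partial W$ by Lemma~\ref{lemma:newG-1}; and $h_{S_N}\equiv r\mod\ell_{S_N}$ because every $r_n$ does.

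The essential obstacle is the pigeonhole step in the second paragraph: passing from the finite piece of information ``$a_n$ admits at least one source $b_n\in\cB\setminus S_n$'' at each level to the infinite conclusion ``some fixed $a\in\cA_{S_N}$ has infinitely many sources with respect to $S_N$''. This crucially combines two ingredients — the saturation of each $S_n$ (entering through Proposition~\ref{des_holes} to provide the description $\cH_n=\cM_{\cA_{S_n}}\setminus\cM_{S_n}$) and, more importantly, the exhaustion $S_n\nearrow\cB$ (which prevents any single $b$ from persisting as a source indefinitely). Everything else amounts to routine bookkeeping with $\gcd$'s and a standard compactness argument in $G$.
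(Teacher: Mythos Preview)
Your proposal is correct and follows essentially the same route as the paper. The paper argues \eqref{eq:A_S-boundary-infty-1} by picking $b_n\in\cB\setminus S_n$ with $\gcd(b_n,\ell_{S_n})\mid h_{S_n}$, projecting to $\gcd(b_n,\ell_{S_N})\mid r$, and pigeonholing in the finite set $\cA_{S_N}$ exactly as you do; for the reverse direction of \eqref{eq:A_S-boundary-infty-2} it simply invokes Lemma~\ref{lemma:newG-2}a) (whose proof is precisely the compactness argument you spell out), so the only difference is that you unpack that citation.
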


\begin{proof}
Suppose there exists some $h\in U_{S_N}(\Delta(r))\cap\partial W$. Then
$h_{S_n}\in (r+\ell_{S_N}\Z)\cap\left(\cM_{\cA_{S_n}}\setminus\cM_{S_n}\right)$ for all $n\geqslant N$, see
Lemma~\ref{lemma:newG-1} and Proposition~\ref{des_holes}. Hence $r\not\in\cM_{S_N}$, and
there are numbers $k_n\in\Z$ and $b_n\in\cB\setminus S_n$ $(n\geqslant N)$ such that
$\gcd(b_n,\ell_{S_N})\mid\gcd(b_n,\ell_{S_n})\mid h_{S_n}=r+k_n\ell_{S_N}$.
In particular $\gcd(b_n,\ell_{S_N})\mid r$ for all $n\geqslant N$, and as $\cA_{S_N}$ is a finite set, there exist $a\in\cA_{S_N}$ and infinitely many $b_{n_i}$ such that $\gcd(b_{n_i},\ell_{S_N})=a$. It follows that $a\in\cA_{S_N}^\infty$ and $a\mid r$.
This proves \eqref{eq:A_S-boundary-infty-1}.

Observe that, trivially, $h\in U_{S_n}(\Delta(h_{S_n}))\cap\partial W$ for each $n\geqslant N$. Hence we can apply \eqref{eq:A_S-boundary-infty-1} to $n$ and $h_{S_n}$ instead of $N$ and $r$, respectively. It follows that $h_{S_n}\in\cM_{\cA_{S_n}^\infty}\setminus\cM_{S_n}$. As $h_{S_n}=r+k_n\ell_{S_N}$, this proves the ``$\Rightarrow$''-direction of \eqref{eq:A_S-boundary-infty-2} and also the final claim.

The ``$\Leftarrow$''-implication of \eqref{eq:A_S-boundary-infty-2} follows from Lemma~\ref{lemma:newG-2} and Proposition~\ref{des_holes}, because $\cA_{S_n}^\infty\subseteq\cA_{S_n}$ for all $n\in\N$.
\end{proof}
\begin{corollary}\label{coro:Heilbronn-Rohrbach}
Suppose that $S_1\subseteq S_2\subseteq\dots\nearrow\cB$ is a filtration by saturated sets. Then, for each $n\in\N$,
\begin{equation*}
m_G(\partial W)\le d(\cM_{\cA_{S_n}^{\infty,p}})\le
1-\prod_{a\in\cA_{S_n}^{\infty,p}}\left(1-\frac{1}{a}\right).
\end{equation*}
\end{corollary}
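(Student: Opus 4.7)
The plan is to prove the two inequalities independently. The left one is essentially a translation of Lemma~\ref{lemma:A_S-infty-boundary} into a measure-theoretic statement, via the standard identification of Haar measure on $G=\varprojlim\Z/p_n\Z$ with densities of $p_n$-periodic subsets of $\Z$. The right one is a direct application of Behrend's (Heilbronn--Rohrbach) inequality to the finite set $\cA_{S_n}^{\infty,p}$, and so falls within the same circle of arithmetic tools already invoked in the proof of Proposition~\ref{prop:central-new}.

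For the left inequality I would fix $n\in\N$ and observe that for any $h\in\partial W$ the cylinder $U_{S_n}(\Delta(h_{S_n}))$ contains $h$ and therefore meets $\partial W$. Hence implication \eqref{eq:A_S-boundary-infty-1} of Lemma~\ref{lemma:A_S-infty-boundary} forces $h_{S_n}\in\cM_{\cA_{S_n}^{\infty}}\setminus\cM_{S_n}$. Since passing to the primitive part preserves sets of multiples, $\cM_{\cA_{S_n}^{\infty,p}}=\cM_{\cA_{S_n}^{\infty}}$, and so
\[
\partial W\ \subseteq\ \bigcup_{r\in\cM_{\cA_{S_n}^{\infty,p}}\cap[0,p_n)} U_{S_n}(\Delta(r)).
\]
Each cylinder on the right-hand side has normalized Haar measure $1/p_n$, and $\cM_{\cA_{S_n}^{\infty,p}}$ is $p_n$-periodic because every $a\in\cA_{S_n}^{\infty,p}\subseteq\cA_{S_n}$ divides $p_n=\ell_{S_n}$. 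The union therefore has Haar measure $|\cM_{\cA_{S_n}^{\infty,p}}\cap[0,p_n)|/p_n=d(\cM_{\cA_{S_n}^{\infty,p}})$, which yields the first bound.

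For the right inequality, the set $\cA_{S_n}^{\infty,p}$ is finite (as a subset of the finite set $\cA_{S_n}$), so its set of multiples has a well-defined natural density. I would then invoke Behrend's inequality
\[
1-d(\cM_{A\cup\{b\}})\ \ge\ (1-d(\cM_A))\bigl(1-\tfrac{1}{b}\bigr),
\]
which is the special case of \cite[Thm.~0.12]{Hall1996} already used in Proposition~\ref{prop:central-new}, and induct on the (finitely many) elements of $\cA_{S_n}^{\infty,p}$ to obtain $d(\cM_{\cA_{S_n}^{\infty,p}})\le 1-\prod_{a\in\cA_{S_n}^{\infty,p}}(1-1/a)$.

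I do not foresee any serious obstacle: once Lemma~\ref{lemma:A_S-infty-boundary} and Behrend's inequality are on the table, both steps are routine bookkeeping. The only micro-verifications are that the cylinders $U_{S_n}(\Delta(r))$ carry Haar measure $1/p_n$ and that $\cM_{A^{prim}}=\cM_A$ for any $A\subseteq\N$.
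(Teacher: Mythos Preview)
Your proof is correct and follows the paper's argument essentially verbatim: cover $\partial W$ by the level-$n$ cylinders it meets, invoke \eqref{eq:A_S-boundary-infty-1} to place their indices in $\cM_{\cA_{S_n}^\infty}\setminus\cM_{S_n}\subseteq\cM_{\cA_{S_n}^{\infty,p}}$, and translate to Haar measure via $p_n$-periodicity. The only cosmetic difference is that the paper cites the right-hand bound directly as the Heilbronn--Rohrbach inequality \cite[Thm.~0.9]{Hall1996} rather than deriving it from Behrend's inequality by induction.
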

\begin{proof}
For $n\in\N$ denote by $\cU_n$ the family of all sets $U_{S_n}(\Delta(r))$ that have non-empty intersection with $\partial W$ and by $\bigcup\cU_n$ the union of these sets. Then~\eqref{eq:A_S-boundary-infty-1} implies
$m_G(\partial W)
\le
m_G(\bigcup\cU_n)
=
\frac{\card{\cU_n}}{\ell_{S_n}}
\le
d(\cM_{\cA_{S_n}^\infty}\setminus \cM_{S_n})
\le
d(\cM_{\cA_{S_n}^{\infty,p}})$ for all $n$, and the second inequality is the Heilbronn-Rohrbach inequality \cite[Theorem~0.9]{Hall1996}.
\end{proof}
\begin{remark}\label{rem:cHn_inclusions}
Fix the period structure given by $p_n=\lcm(S_n)$. Then
\begin{equation*}
\wcH_n\subseteq\cM_{\cA_{S_n}^\infty}\setminus\cM_{S_n}\subseteq\cH_n=\cM_{\cA_{S_n}}\setminus\cM_{S_n}
\end{equation*}
by Proposition~\ref{des_holes},
Lemma~\ref{lemma:newG-2}a) and equation~\eqref{eq:A_S-boundary-infty-1} of
Lemma~\ref{lemma:A_S-infty-boundary}. Moreover, equation~\eqref{eq:A_S-boundary-infty-2} of
Lemma~\ref{lemma:A_S-infty-boundary} shows that
$r\in\wcH_N$ if and only if\;
$\forall
n\geqslant N: (r+\ell_{S_N}\Z)\cap\left(\cM_{\cA_{S_n}^\infty}\setminus\cM_{S_n}\right)\neq\emptyset$. This characterization is also the starting point for verifying
the structural assumption~\eqref{eq:additional-structure} on $\wcH_n$ from Subsection~\ref{subsec:additional-structure}, see Proposition~\ref{prop:char_ess_hol} and Theorem~\ref{theo:ess-holes-arithmetic} below.
\end{remark}

\begin{remark}
In the general Toeplitz case one can easily construct a Toeplitz sequence for which not all holes are essential. We construct a $\cB$-free Toeplitz subshift with this property in Example~\ref{ex:not_all_holes_new}. Moreover, the property $\wcH_n=\cH_n$ may depend on the choice of the period structure as we show in Example~\ref{ex:holes}, and a $\cB$-free Toeplitz subshift for which $\wcH_n\subsetneq\cM_{\cA_{S_n}^\infty}\setminus\cM_{S_n}$ is provided in Example~\ref{ex:not-always-GH}.
\end{remark}

In the rest of this subsection we show that the structural assumption \eqref{eq:additional-structure} of Propositions~\ref{prop:sufficient-for-(D')} and~\ref{prop:local-invariance-new-2} and of Theorem~\ref{theo:pre-B-free} are satisfied in the $\cB$-free setting.

\begin{definition}
Let $(S_n)_n$ be a filtration of $\cB$ by finite sets.
An integer sequence $(a_n)_{n\ge N}$ is called an \emph{$(a,\cA)$-sequence}, if $a_N=a$ and if $a_n\in\cA_{S_n}{\setminus S_n}$
and $\gcd(a_{n+1},\ell_{S_n})=a_n$ for all $n\ge N$.
\end{definition}
\begin{remark}\label{remark:a_in_infinity}
If $(a_n)_{n\ge N}$ is an $(a,\cA)$-sequence, then
$a_n\in\cA_{S_n}^\infty$ for all $n\ge N$. Indeed, suppose that~$a_m$ has only finitely many sources $b_1,\ldots,b_k$ for some $m\geq N$. Consider $n\geq m$ such that $b_1,\ldots,b_k\in S_n$. Since $a_n\not\in S_n$, there exists $b\in\cB\setminus S_n$ such that $a_n=\gcd(b,\ell_{S_n})$. Then $a_m=\gcd(a_n,\ell_{S_m})=\gcd(b,\ell_{S_m})$. So $b$ is a source of $a_m$ different from $b_1,\ldots,b_k$. This yields a contradiction.
Note also that by Lemma~\ref{lemma:A_S-infty} b), for every $a\in\cA_{S_N}^{\infty}$ there exists an $(a,\cA)$-sequence.
\end{remark}
\begin{proposition}\label{prop:char_ess_hol}
Let $(S_n)_n$ be a filtration of $\cB$ by finite sets. Then, for all $N\in\N$,
the set $\wcH_N$ is the union of sets $a\Z\setminus\cM_{S_N((a_n)_{n\ge N})}$, where the union extends over all $(a,\cA)$-sequences $(a_n)_{n\ge N}$ and where
\begin{equation}\label{eq:S_N(..)-def}
S_N((a_n)_{n\ge N})=\left\{\gcd(b,\ell_{S_N}): b\in\cB\text{ and }  b\mid a_n\vee\ell_{S_N}\text{ for some }n\ge N\right\}.
\end{equation}
\end{proposition}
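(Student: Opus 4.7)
The plan is to verify both inclusions using the characterization from Remark~\ref{rem:cHn_inclusions}: $r\in\wcH_N$ if and only if $(r+\ell_{S_N}\Z)\cap(\cM_{\cA_{S_n}^\infty}\setminus\cM_{S_n})\neq\emptyset$ for every $n\ge N$. Lemmas~\ref{finite_mult} and~\ref{arith_char_incl}, combined with the distributivity of $\gcd$ over $\vee$ in the divisibility lattice, will carry the arithmetic.

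For ``$\supseteq$'', we fix an $(a,\cA)$-sequence $(a_n)_{n\ge N}$ and some $r\in a\Z\setminus\cM_{S_N((a_n)_{n\ge N})}$. For each $n\ge N$, since $a=\gcd(a_n,\ell_{S_N})\mid r$, the progression $(r+\ell_{S_N}\Z)\cap a_n\Z$ is non-empty, and by Remark~\ref{remark:a_in_infinity} it lies in $a_n\Z\subseteq\cM_{\cA_{S_n}^\infty}$. If it were entirely contained in $\cM_{S_n}$, Lemma~\ref{finite_mult} would produce $b\in S_n$ with $b\mid\gcd(r,\ell_{S_N})\vee a_n$, so in particular $b\mid\ell_{S_N}\vee a_n$ and therefore $\gcd(b,\ell_{S_N})\in S_N((a_n)_{n\ge N})$. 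Applying $\gcd(\cdot,\ell_{S_N})$ to the divisibility above and using distributivity together with $a\mid\gcd(r,\ell_{S_N})$ gives $\gcd(b,\ell_{S_N})\mid\gcd(r,\ell_{S_N})\vee a=\gcd(r,\ell_{S_N})\mid r$, contradicting $r\notin\cM_{S_N((a_n)_{n\ge N})}$. Hence $(r+\ell_{S_N}\Z)\cap a_n\Z\setminus\cM_{S_n}\neq\emptyset$ and $r\in\wcH_N$.

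For ``$\subseteq$'', given $r\in\wcH_N$, Lemma~\ref{lemma:A_S-infty-boundary} supplies $h\in U_{S_N}(\Delta(r))\cap\partial W$ with $h_{S_n}\in\cM_{\cA_{S_n}^\infty}\setminus\cM_{S_n}$ for all $n\ge N$. We set $A_n:=\{a\in\cA_{S_n}^\infty:a\mid h_{S_n}\}$; each $A_n$ is finite and non-empty, and Lemma~\ref{lemma:A_S-infty}c) together with $h_{S_{n+1}}\equiv h_{S_n}\pmod{\ell_{S_n}}$ shows that $a\mapsto\gcd(a,\ell_{S_n})$ maps $A_{n+1}$ into $A_n$. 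An inverse-limit (König's lemma) argument on this projective system of non-empty finite sets produces a compatible sequence $(a_n)_{n\ge N}$, which is an $(a,\cA)$-sequence with $a=a_N$ (the membership $a_n\in\cA_{S_n}\setminus S_n$ follows from Lemma~\ref{lemma:A_S-infty}a)); and $a\mid h_{S_N}\equiv r\pmod{\ell_{S_N}}$ together with $a\mid\ell_{S_N}$ forces $r\in a\Z$. The main step, which we expect to be the most delicate, is to show $r\notin\cM_{S_N((a_n)_{n\ge N})}$: if $\gcd(b,\ell_{S_N})\mid r$ for some $b\in\cB$ with $b\mid a_n\vee\ell_{S_N}$ and some $n\ge N$, we pick $m\ge n$ with $b\in S_m$. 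Since $a_n\mid h_{S_n}$, $a_n\mid\ell_{S_n}$, and $h_{S_m}\equiv h_{S_n}\pmod{\ell_{S_n}}$, we get $a_n\mid h_{S_m}$, while $h_{S_m}\equiv r\pmod{\ell_{S_N}}$ places $h_{S_m}$ in $(r+\ell_{S_N}\Z)\cap a_n\Z$. Lemma~\ref{arith_char_incl} (c)$\Rightarrow$(a), applied with $\ell=\ell_{S_N}$, $a=a_n$, $c=b$, shows that this entire intersection lies in $b\Z$, forcing $b\mid h_{S_m}$ and contradicting $h_{S_m}\notin\cM_{S_m}$.
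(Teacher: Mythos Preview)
Your proof is correct and takes a somewhat different route from the paper's. The paper argues through a chain of logical equivalences starting from the negation $r\notin\wcH_N$, reducing (via Lemmas~\ref{finite_mult} and~\ref{arith_char_incl}) to a statement of the shape $\exists n\,\forall a_n\in\cA_{S_n}^\infty\,(\dots)$, then dualizes to $\forall n\,\exists a_n\,(\dots)$; it upgrades this pointwise choice of $a_n$'s to a coherent $(a,\cA)$-sequence by an iterated pigeonhole/subsequence extraction (at each stage passing to a subsequence on which $\gcd(a_{n},\ell_{S_m})$ stabilizes). You instead prove the two inclusions directly and, for ``$\subseteq$'', anchor the construction at a single boundary point $h\in\partial W$ supplied by Lemma~\ref{lemma:A_S-infty-boundary}: the sets $A_n=\{a\in\cA_{S_n}^\infty:a\mid h_{S_n}\}$ already form an inverse system of non-empty finite sets under $a\mapsto\gcd(a,\ell_{S_n})$, so K\"onig's lemma produces the compatible sequence in one step. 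Your route is a bit more conceptual---the built-in compatibility $h_{S_{n+1}}\equiv h_{S_n}\pmod{\ell_{S_n}}$ replaces the paper's subsequence bookkeeping---while the paper's argument stays purely arithmetic and never invokes a concrete point of $\partial W$. Both proofs use Lemmas~\ref{finite_mult} and~\ref{arith_char_incl} at the same junctures for the divisibility manipulations, and your final contradiction via $b\mid h_{S_m}$ with $h_{S_m}\notin\cM_{S_m}$ is essentially the contrapositive of the paper's clause ``$b\mid a_n\vee\ell_{S_N}\Rightarrow\gcd(b,\ell_{S_N})\nmid r$''.
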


\begin{proof}
Notice that in view of
Remark~\ref{rem:cHn_inclusions}
\begin{equation*}
r\in\wcH_N\;\Leftrightarrow\;
\forall
n\geqslant N: (r+\ell_{S_N}\Z)\cap\left(\cM_{\cA_{S_n}^{\infty}}\setminus\cM_{S_n}\right)\neq\emptyset.
\end{equation*}
Hence
\begin{equation*}
\begin{split}
r\not\in\wcH_N
&\Leftrightarrow\;
\exists
 n\ge N: (r+\ell_{S_N}\Z)\cap\left(\cM_{\cA_{S_n}^{\infty}}\setminus\cM_{S_n}\right)=\emptyset\\
&\Leftrightarrow\;
\exists n\ge N\ \forall a_n\in\cA_{S_n}^{\infty}:
(r+\ell_{S_N}\Z)\cap a_n\Z\subseteq\cM_{S_n}\\
&\Leftrightarrow\;
\exists n\ge N\ \forall a_n\in\cA_{S_n}^{\infty}:
(r+\ell_{S_N}\Z)\cap a_n\Z\neq\emptyset\;\Rightarrow\;\exists b\in S_n:
(r+\ell_{S_N}\Z)\cap a_n\Z\subseteq b\Z\\
&\Leftrightarrow\;
\exists n\ge N\ \forall a_n\in\cA_{S_n}^{\infty}:
\gcd(a_n,\ell_{S_N})\mid r\;\Rightarrow\;\exists b\in S_n: b\mid a_n\vee\ell_{S_N}\text{ and }\gcd(b,\ell_{S_N})\mid r.
\end{split}
\end{equation*}
The third equivalence follows from Lemma \ref{finite_mult} and the last one from Lemma \ref{arith_char_incl}.
Now the first of the following two equivalences is immediate:
\begin{eqnarray}
&&r\in\wcH_N\notag\\
&\Leftrightarrow&
\forall n\ge N\;\exists a_n\in\cA_{S_n}^{\infty}:
\gcd(a_n,\ell_{S_N})\mid r\;\text{and}\;
\left[\forall b\in S_n:
b\mid a_n\vee\ell_{S_N}\Rightarrow\gcd(b,\ell_{S_N})\nmid r\right]\notag\\
&\Leftrightarrow&
\exists a\in\cA_{S_N}^{\infty}\ \exists\text{ an $(a,\cA)$-sequence }(a_n)_{n\ge N}:
a\mid r\text{ and }\label{eq:ess-equiv-in-proof}\\
&&\hspace*{50mm}
\forall b\in\cB\ \forall n\ge N:
b\mid a_n\vee\ell_{S_N}\Rightarrow\gcd(b,\ell_{S_N})\nmid r.\notag
\end{eqnarray}
The ``$\Leftarrow$''-direction of the second equivalence is obvious - just a matter of notation. For the ``$\Rightarrow$''-direction we construct a suitable $(a,\cA)$-sequence $(a_m')_{m\ge N}$ from the given numbers $a_n$:
There is $a\in\cA_{S_N}^\infty$ such that $a=\gcd(a_n,\ell_{S_N})$ for infinitely many indices $n$. Obviously $a\mid r$, and we choose $a_N'=a$.
Suppose inductively that suitable $a_N',\dots,a_m'$ are constructed in such a way that $a_m'=\gcd(a_{n},\ell_{S_m})$ for
infinitely many different indices $n\ge m$. Then there is
an increasing subsequence $(a_{n_i})_i$ such that $\gcd(a_{n_i},\ell_{S_{m+1}})$ is the same value for all $n_i\ge m+1$. This common value is denoted $a_{m+1}'$. It satisfies $\gcd(a_{m+1}',\ell_{S_m})=\gcd(a_{n_i},\ell_{S_{m+1}},\ell_{S_m})=\gcd(a_{n_i},\ell_{S_m})=a_m'$ for all $n_i$.
Suppose now that $b\in\cB$, $n\ge N$ and $b\mid a_n\vee\ell_{S_N}$. Fix $n'\ge n$ such that $b\in S_{n'}$. Then $b\mid a_{n'}\vee\ell_{S_n}$, because $a_n\mid a_{n'}$, and we conclude that $\gcd(b,\ell_{S_N})\nmid r$.
The claim follows.
\end{proof}

\begin{theorem}\label{theo:ess-holes-arithmetic}
Let $(S_n)_n$ be a filtration of $\cB$ by finite sets and let $N>0$. For each $a\in\cA_{S_N}^\infty$ there exists a finite primitive set $S_N(a)$ of positive integers such that
\begin{equation}
\wcH_N=\bigcup_{a\in\cA_{S_N}^\infty}a\Z\setminus\cM_{S_N(a)}
\end{equation}
and all sets $a\Z\setminus\cM_{S_N(a)}$ are non-empty.
(An explicit construction of the sets $S_N(a)$ is given in the proof.) In particular, assumption \eqref{eq:additional-structure} of Theorem~\ref{theo:pre-B-free} is satisfied.
\end{theorem}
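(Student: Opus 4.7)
The plan is to upgrade Proposition~\ref{prop:char_ess_hol}, which already expresses $\wcH_N$ as the union $\bigcup_{a\in\cA_{S_N}^\infty}\bigcup_{(a_n)\in T(a)}\bigl(a\Z\setminus\cM_{S_N((a_n))}\bigr)$ over all $(a,\cA)$-sequences (write $T(a)$ for this family), to the asserted single-indexed union. For each fixed $a$, the consolidation rests on the observation that every $S_N((a_n))$ consists of divisors of $\ell_{S_N}$, a finite set, so the collection $\{S_N((a_n)):(a_n)\in T(a)\}$ has only finitely many members, say $T_1,\ldots,T_k$. Using the elementary identity $\cM_{T_1}\cap\cdots\cap\cM_{T_k}=\cM_{\{t_1\vee\cdots\vee t_k:t_i\in T_i\}}$, I would set $S_N(a):=\bigl(\{t_1\vee\cdots\vee t_k:t_i\in T_i\}\bigr)^{\mathrm{prim}}$, obtaining a finite primitive subset of the divisors of $\ell_{S_N}$ satisfying $\cM_{S_N(a)}=\bigcap_{(a_n)\in T(a)}\cM_{S_N((a_n))}$. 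Consequently $a\Z\setminus\cM_{S_N(a)}=\bigcup_{(a_n)\in T(a)}\bigl(a\Z\setminus\cM_{S_N((a_n))}\bigr)$, and Proposition~\ref{prop:char_ess_hol} delivers the required decomposition $\wcH_N=\bigcup_a a\Z\setminus\cM_{S_N(a)}$.

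For nonemptiness, I would fix $a\in\cA_{S_N}^\infty$, pick infinitely many pairwise distinct sources $b_1,b_2,\ldots\in\cB\setminus S_N$ of $a$, and extract a subsequence with $\Delta(b_{i_k})\to\tilde h\in G$. For each $m\ge N$ the values $\gcd(b_{i_k},\ell_{S_m})$ stabilise (being divisors of $\ell_{S_m}$) to some $a_m$, and I would verify that $(a_m)_{m\ge N}$ is an $(a,\cA)$-sequence: $a_N=a$ and $\gcd(a_{m+1},\ell_{S_m})=a_m$ are immediate, while primitivity of $\cB$ together with $b_{i_k}\notin S_m$ for all sufficiently large $k$ (as $S_m$ is finite and the $b_i$ are distinct) prevents $a_m$ from lying in $\cB$, and therefore $a_m\in\cA_{S_m}\setminus S_m$ by saturation of the filtration. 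Lemma~\ref{lemma:A_S-infty}(a) gives $a_m\notin\cM_{S_m}$, which combined with $a_m\mid\tilde h_m$ shows $\tilde h_m\in\cH_m$ for every $m$, so $\tilde h\in\partial W$ by Lemma~\ref{lemma:newG-1}. Let $r\in a\Z$ be an integer representative of $\tilde h_N$; by construction $\gcd(r,\ell_{S_N})=a$.

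The hard part will be showing $r\notin\cM_{S_N((a_n))}$ for this specific sequence $(a_n)$, which then witnesses $a\Z\setminus\cM_{S_N(a)}\ne\emptyset$. Suppose for contradiction that some $s=\gcd(b,\ell_{S_N})\in S_N((a_n))$ divides $r$, with $b\in\cB$ and $b\mid a_m\vee\ell_{S_N}$ for some $m\ge N$. Then $s\mid\gcd(r,\ell_{S_N})=a$, and $b\mid\ell_{S_m}$ combined with saturation forces $b\in S_m$. The crucial prime-by-prime step asserts $b\mid b_{i_k}$ for large $k$: for any prime $p$, the relations $b\mid a_m\vee\ell_{S_N}$ and $a_m\mid b_{i_k}$ give $v_p(b)\le\max(v_p(b_{i_k}),v_p(\ell_{S_N}))$; if $v_p(b)>v_p(b_{i_k})$ then $v_p(b)\le v_p(\ell_{S_N})$, hence $v_p(s)=v_p(b)$, but the chain $s\mid a\mid a_m\mid b_{i_k}$ forces $v_p(s)\le v_p(b_{i_k})<v_p(b)=v_p(s)$, a contradiction. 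Primitivity of $\cB$ then yields $b=b_{i_k}$, contradicting $b\in S_m$ versus $b_{i_k}\notin S_m$ for large $k$. The delicate coordination of saturation, primitivity of $\cB$, and the source condition at the level of prime valuations is what makes this the main obstacle.
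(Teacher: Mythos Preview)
Your construction of $S_N(a)$ matches the paper's exactly, and your nonemptiness argument lands on the same prime-valuation contradiction with primitivity of~$\cB$. The paper's version is more direct, however: it tests the single element $a$ against an \emph{arbitrary} $(a,\cA)$-sequence, observes that $\gcd(b,\ell_{S_N})\mid a$ and $b\mid a_n\vee\ell_{S_N}$ force $b\mid a_n$, and concludes from $a_n\in\cA_{S_n}^\infty$ that $b$ would divide infinitely many distinct elements of~$\cB$. Your route through the limit point $\tilde h\in\partial W$ and the representative~$r$ is an unnecessary detour---since $s\mid\ell_{S_N}$, the condition $s\mid r$ immediately reduces to $s\mid\gcd(r,\ell_{S_N})=a$, so you could have started with $r=a$ and skipped steps~(1)--(5) entirely.

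Two minor points. First, the implication ``$a_m\notin\cM_{S_m}$ and $a_m\mid\tilde h_m$ give $\tilde h_m\in\cH_m$'' is not valid as stated; what you need is $\tilde h_m\notin\cM_{S_m}$, which follows instead from $\tilde h_m\equiv b_{i_k}$ and $b_{i_k}\notin\cM_{S_m}$ (primitivity of~$\cB$, $b_{i_k}\notin S_m$). Second, you invoke saturation twice, but the theorem does not assume it and your argument does not need it: $a_m\notin\cB$ already gives $a_m\notin S_m\subseteq\cB$, and the final contradiction comes from $b=b_{i_k}$ for infinitely many distinct~$k$, not from $b\in S_m$.
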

\begin{proof}
Consider any fixed $a\in\cA_{S_N}^\infty$. In view of Proposition~\ref{prop:char_ess_hol}, the set $S_N(a)$ must be constructed in such a way that $\cM_{S_N(a)}=\bigcap\cM_{S_N((a_n)_{n\ge N})}$ where the intersection runs over all $(a,\cA)$-sequences with $a_N=a$.
As all sets $S_N((a_n)_{n\ge N})$ consist of divisors of $\ell_{S_N}$, this set is only a finite intersection, say of $r$ sets of multiples $\cM_{R_1},\dots,\cM_{R_r}$. Hence we may choose as $S_N(a)$ the primitivization of the set of all $c_1\vee\dots\vee c_r$ where $c_i\in R_i$ for $i=1,\dots,r$.

Suppose for a contradiction that $a\Z\subseteq \cM_{S_N(a)}$. Then
$a\Z\subseteq\cM_{S_N((a_n)_{n\ge N})}$ for each
$(a,\cA)$-sequence $(a_n)_{n\ge N}$ with $a_N=a$. Hence, for each such sequence, there is $b\in\cB$ such that
$\gcd(b,\ell_{S_N})\mid a=\gcd(a_n,\ell_{S_N})$ and $b\mid a_n\vee\ell_{S_N}$ for some $n\ge N$.
It follows that $b^{\div\ell_{S_N}}\mid(a_n\vee\ell_{S_N})^{\div\ell_{S_N}}=a_n^{\div\ell_{S_N}}$, so that
$b=b^{\div\ell_{S_N}}\cdot\gcd(b,\ell_{S_N})\mid a_n^{\div\ell_{S_N}}\cdot \gcd(a_n,\ell_{S_N})=a_n$. But this is impossible, because $a_n\in\cA_{S_n}^\infty$ and $\cB$ is primitive.
\end{proof}

\begin{remark}\label{remark:sets_multiples}
\begin{compactenum}[a)]
\item Each set $S_N((a_n)_{n\ge N})$ contains the set $S_N$. Hence each of the sets $\cM_{S_N(a)}$ contains $\cM_{S_N}$.
\item If $\sup_N\card{\cA_{S_N}^\infty}<\infty$, then, for sufficiently large $N$, there is at most one $(a,\cA)$-sequence $(a_n)_{n\ge N}$
for each $a\in\cA_{S_N}^\infty$. Hence $\cM_{S_N(a)}=\cM_{S_N((a_n)_{n\ge N})}$ for each such $a$.
\end{compactenum}
\end{remark}

Under a special assumption (which is satisfied in all our examples except Example~\ref{ex:not-always-GH}) we have a simplified description of the sets $\wcH_n$.
\begin{lemma}\label{lemma:essential_holes_GH}
Assume that $(S_n)$ is a filtration of $\cB$ by finite saturated sets. Let $N\in\N$.
The following conditions are equivalent:
\begin{compactenum}[a)]
\item $\ell_{S_N}\vee a'\in\cF_{\cB\setminus S_N}$ for every $n>N$ and $a'\in\cA_{S_n}^{\infty}$,
\item $S_N((a_n)_{n\geq N})=S_N$ for every $a\in\cA_{S_N}^{\infty}$ and for all $(a,\cA)$-sequences $(a_n)_{n\ge N}$.
\end{compactenum}
If this the case, $\wcH_N=\cM_{\cA_{S_N}^{\infty}}\setminus\cM_{S_N}$.
\end{lemma}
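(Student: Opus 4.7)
The plan is to recast condition (b) in concrete arithmetic terms and then verify it is equivalent to (a); the final claim will then follow directly from Theorem~\ref{theo:ess-holes-arithmetic}.

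First, I unpack what $S_N((a_n)_{n\ge N})=S_N$ means. Recall from~\eqref{eq:S_N(..)-def} that every element of $S_N((a_n)_{n\ge N})$ has the form $\gcd(b,\ell_{S_N})\in\cA_{S_N}$ with $b\in\cB$ satisfying $b\mid a_n\vee\ell_{S_N}$ for some $n\ge N$, and that $S_N\subseteq S_N((a_n)_{n\ge N})\subseteq\cA_{S_N}$ by Remark~\ref{remark:sets_multiples}(a). Saturation of $S_N$ gives $\cA_{S_N}\cap\cB=S_N$. Combined with primitivity of $\cB$, this yields the equivalence $\gcd(b,\ell_{S_N})\in S_N\Leftrightarrow b\in S_N$ for any $b\in\cB$: indeed, if $\gcd(b,\ell_{S_N})\in\cB$ divides $b\in\cB$, primitivity forces $b=\gcd(b,\ell_{S_N})\mid\ell_{S_N}$, hence $b\in\cA_{S_N}\cap\cB=S_N$. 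Therefore $S_N((a_n)_{n\ge N})=S_N$ is equivalent to
\[
a_n\vee\ell_{S_N}\in\cF_{\cB\setminus S_N}\quad\text{for every }n\ge N.
\]

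For (a)$\Rightarrow$(b), take any $(a,\cA)$-sequence $(a_n)_{n\ge N}$ with $a\in\cA_{S_N}^\infty$. For $n=N$ the required condition reduces to $\ell_{S_N}\in\cF_{\cB\setminus S_N}$, which is exactly the ``$b\in\cB,\;b\mid\ell_{S_N}\Rightarrow b\in S_N$'' direction of the equivalence above. For $n>N$, Remark~\ref{remark:a_in_infinity} gives $a_n\in\cA_{S_n}^\infty$, so (a) applies directly.

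For (b)$\Rightarrow$(a), fix $n>N$ and $a'\in\cA_{S_n}^\infty$. I build an $(a,\cA)$-sequence passing through $a'$ at level $n$ by setting $a_k:=\gcd(a',\ell_{S_k})$ for $N\le k\le n$. Note $a_n=a'$ since $a'\mid\ell_{S_n}$, and the compatibility $\gcd(a_{k+1},\ell_{S_k})=a_k$ follows from $\ell_{S_k}\mid\ell_{S_{k+1}}$; Lemma~\ref{lemma:A_S-infty}(c) ensures each $a_k\in\cA_{S_k}^\infty\subseteq\cA_{S_k}\setminus S_k$, and in particular $a:=a_N\in\cA_{S_N}^\infty$. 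Extend the sequence to $k>n$ by iterating Lemma~\ref{lemma:A_S-infty}(b). Applying (b) to this sequence then gives $a'\vee\ell_{S_N}=a_n\vee\ell_{S_N}\in\cF_{\cB\setminus S_N}$, which is (a).

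Finally, under these equivalent conditions, the explicit construction of $S_N(a)$ in the proof of Theorem~\ref{theo:ess-holes-arithmetic} shows that $\cM_{S_N(a)}$ is the intersection of the sets $\cM_{S_N((a_n)_{n\ge N})}$ over all $(a,\cA)$-sequences starting at $a$. By (b) each such set equals $\cM_{S_N}$, whence $\cM_{S_N(a)}=\cM_{S_N}$, and therefore
\[
\wcH_N=\bigcup_{a\in\cA_{S_N}^\infty}\bigl(a\Z\setminus\cM_{S_N(a)}\bigr)=\cM_{\cA_{S_N}^\infty}\setminus\cM_{S_N}.
\]
The main (and essentially only) subtle point is the reformulation of (b) in the first step; the remaining steps are routine bookkeeping using the lemmas already established.
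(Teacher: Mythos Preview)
Your proof is correct and follows essentially the same approach as the paper: both directions rely on constructing an $(a,\cA)$-sequence passing through a prescribed $a'\in\cA_{S_n}^\infty$ (by taking $a_k=\gcd(a',\ell_{S_k})$ below level $n$ and extending via Lemma~\ref{lemma:A_S-infty}(b) above), and both deduce the final description of $\wcH_N$ from Proposition~\ref{prop:char_ess_hol}/Theorem~\ref{theo:ess-holes-arithmetic}. Your explicit reformulation of (b) as ``$a_n\vee\ell_{S_N}\in\cF_{\cB\setminus S_N}$ for all $n\ge N$'' via the equivalence $\gcd(b,\ell_{S_N})\in S_N\Leftrightarrow b\in S_N$ is a nice touch that also cleanly isolates the $n=N$ case, which the paper handles only implicitly.
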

\begin{proof}
Assume a) and let $a\in \cA_{S_N}^{\infty}$. Let $\gcd(b,\ell_{S_N})\in S_N((a_n)_{n\geq N})$ for some $b\in\cB$ and some $(a,\cA)$-sequence $(a_n)_{n\ge N}$.
Then $b\mid a_n\vee \ell_{S_N}$ for some $n\ge N$ and, as $a_n\in\cA_{S_n}^{\infty}$ by Remark~\ref{remark:a_in_infinity}, a) applied to $a'=a_n$ yields $b\in S_N$.

Conversely, assume that $b\mid\ell_{S_N}\vee a'$ for some $b\in\cB\setminus S_N$ and $a'\in\cA_{S_{n_0}}^{\infty}$, where $n_0> N$. There exists an $(a',\cA)$-sequence $(a_n)_{n\ge n_0}$  (see Remark~\ref{remark:a_in_infinity}).  Set $a_n=\gcd(a_{n_0},\ell_{S_n})$ for $N\le n\le n_0$. Then $(a_n)_{n\ge N}$ is an $(a_N,\cA)$-sequence such that $a_{n_0}=a'$. Moreover, $\gcd(\ell_{S_N},b)\in S((a_n)_{n\ge N})$ and $\gcd(\ell_{S_N},b)\notin S_N$ as $S_N$ is saturated. It follows that $S_N((a_n)_{n\geq N})\neq S_N$.

The remaining assertion follows by Proposition~\ref{prop:char_ess_hol} and Remark~\ref{remark:a_in_infinity}.
\end{proof}

\subsection{Trivial centralizer}\label{subsec:trivial-centralizer}

We start with an example for which the simple Proposition~\ref{prop:invariance-new} guarantees a trivial centralizer:
\begin{example}\label{ex:power:identity}
Let $\cB=\{2^nc_n: n>0\}$, where $(c_n)_n$ is a pairwise coprime sequence of odd integers. We will show that the corresponding $\cB$-free system has a trivial centralizer. Let $S_n=\{2^kc_k: 0<k\le n\}$. The sets $S_n$ form a filtration by finite sets. Then $\ell_{S_n}=2^n\prod_{i=1}^nc_i$ and
$\cA_{S_n}^{\infty,p}=\cA_{S_n}\setminus S_n=\{2^n\}$. Notice that \eqref{eq:trivial-intersection-new} is satisfied for $A_n=\cA_{S_n}^{\infty}$.
Moreover, for each $N>0$ there is only one $(a,\cA)$-sequence with $a\in\cA_{S_N}^{\infty,p}$, namely the sequence $(2^n)_{n\ge N}$, and $S_N((2^n)_{n\ge N})=S_N$ according to \eqref{eq:S_N(..)-def}. Hence $\wcH_N=2^N\setminus\cM_{S_N}=\cH_N$.
 So $\wq_n=\ell_{S_n}$. By Proposition~\ref{prop:invariance-new}, the centralizer is trivial, as shown previously in \cite{Bartnicka2017}.
\end{example}

Next we apply Theorem \ref{theo:pre-B-free} to examples which violate
conditions \eqref{eq:trivial-intersection-new} and \eqref{Seh}, hence also \eqref{D}.
\begin{example}\label{ex:2^i3^i}
Let $\cB=\{2^nc_n,3^nd_n: n>0\}$, where $(c_n)_n$ and $(d_n)_n$ are two sequences of integers coprime to $2$ and $3$ and such that the sequence $(c_n\vee d_n)_{n>0}$ is pairwise coprime. We will show that the corresponding $\cB$-free system has a trivial centralizer.
Let $S_n=\{2^kc_k,3^kd_k: 0<k\le n\}$. The sets $S_n$ form a filtration by finite sets. Notice
that $\ell_{S_n}=6^n\prod_{i=1}^n(c_i\vee d_i)$ and
$\cA_{S_n}^{\infty,p}=\cA_{S_n}\setminus S_n=\{2^n,3^n\}$.
In particular, $\gen{\cA_{S_n}^\infty}=\Z$ for all $n$, so that condition
\eqref{eq:trivial-intersection-new} is violated. Below we show that also \eqref{D} is violated, while Proposition~\ref{prop:local-invariance-new-2} shows that \eqref{D'} and \eqref{DD'} are satisfied.

We claim $\wcH_n=\cH_n=\cM_{\cA_{S_n}^{\infty,p}}\setminus\cM_{S_n}$. Indeed, suppose that $b\mid s^{n}\vee\ell_{S_N}$ for some $b\in\cB$ and some $s\in\{2,3\}$. Since $c_j\nmid s^{n}\vee\ell_{S_N}$ and $d_j\nmid s^{n}\vee\ell_{S_N}$ for any $j>N$, we have $b\in S_N$. So a)~from Lemma~\ref{lemma:essential_holes_GH} holds, and the claim follows observing also Remark~\ref{rem:cHn_inclusions}.

In view of Theorem~\ref{theo:ess-holes-arithmetic}, condition \eqref{eq:additional-structure} is satisfied, so that
Theorem~\ref{theo:pre-B-free} implies $\card K=1$ or $\card K=2$. Below we will rule out the second possibility.
\begin{compactitem}[$\bullet$]
\item If $K=\{k\}$, then $\wq_n\mid (y_F)_n-k$ for all $n>0$, where $\wq_n$ is the minimal period of $\wcH_n$, in this case the minimal period of $\cM_{\cA_{S_n}^\infty}\setminus\cM_{S_n}$. Since $S_n^{\div 2^n}=\{c_k,3^kd_k: 0<k\leq n\}$ and $S_n^{\div 3^n}=\{2^kc_k,d_k: 0<k\leq n\}$ are primitive, Lemma~\ref{lem:describe_period}b) applies.
So $\wq_n=\lcm(\cA_{S_n}^\infty\cup S_n)=\lcm(S_n)$, and the triviality of the centralizer follows from Theorem~\ref{theo:pre-B-free}.
\item Suppose for a contradiction that $\card K=2$, say $K=\{k_2=\kappa_n(2^n),k_3=\kappa_n(3^n)\}$.  Then $2^n\mid(y_F)_n-k_2$ and $3^n\mid(y_F)_n-k_3$ for all $n>0$.
For $s\in\{2,3\}$ the sets $\wcH_n^{k_s}$ (defined in Remark~\ref{remark:Hnk}a) and  $s^n\Z\cap\wcH_n^{k_s}$ are invariant under translation by $\gcd((y_F)_n-k_s,p_n)$, see Remark~\ref{remark:Hnk}c), because $s^n\mid p_n$ and $s^n\mid (y_F)_n-k_s$. Hence the same is true for the set
$\wcH_n^{k_s}\setminus s^n\Z$.

We claim that $\wcH_n^{k_s}\subseteq s^n\Z\cap\wcH_n$ for $s=2,3$. Indeed, if this is not the case, then for at least one $s\in\{2,3\}$ and $\bar s=5-s$,
\begin{equation*}
\emptyset\neq\wcH_n^{k_s}\setminus s^n\Z\subseteq\wcH_n\setminus s^n\Z\subseteq (2^n\Z\cup 3^n\Z)\setminus s^n\Z\subseteq \bar s^n\Z,
\end{equation*}
so that $\gcd((y_F)_n-k_s,p_n)$ must be a multiple of $\bar s^n$. It follows that $\bar s^n\mid(y_F)_n-k_s$. Since we observed above that $\bar s^n\mid(y_F)_n-k_{\bar s}$, we see that $\bar s^n\mid k_s-k_{\bar s}=\pm(k_2-k_3)$ for all $n>0$, which implies $k_2=k_3$ in contradiction to $\card K=2$.

Moreover, as $\wcH_n=\wcH_n^{k_s}\cup\wcH_n^{k_{\bar s}}$,
\begin{equation*}
\wcH_n\setminus\bar s^n\Z\subseteq\wcH_n^{k_s}\subseteq s^n\Z\cap\wcH_n\quad\text{for }s=2,3,
\end{equation*}
Since we proved above that $\wcH_n=\cM_{\cA_{S_n}^{\infty,p}}\setminus\cM_{S_n}=\cM_{\{2^n,3^n\}}\setminus\cM_{S_n}$, this implies
\begin{equation*}
s^n\Z\setminus\cM_{S_n\cup\{\bar s^n\}}
\subseteq\wcH_n^{k_s}\subseteq s^n\Z\setminus\cM_{S_n}\quad\text{for }s=2,3,
\end{equation*}
equivalently,
\begin{equation}\label{eq:inclusions-2-new}
s^n\Z\cap\cM_{S_n}\subseteq s^n\Z\setminus\wcH_n^{k_s}\subseteq s^n\Z\cap\cM_{S_n\cup\{\bar s^n\}}\quad\text{for }s=2,3.
\end{equation}

Denote the minimal period of $\wcH_n^{k_s}$ by $\wq^{(s)}$.
Our goal is to prove that $\gcd(\wq^{(2)},\wq^{(3)})>2m$, because the fact that $\gcd(\wq^{(2)},\wq^{(3)})$ divides
$\gcd((y_F)_n-k_2,(y_F)_n-k_{3})$ and hence also $k_{3}-k_2$ then shows that $k_2=k_3$, the desired contradiction. (Recall from Proposition~\ref{prop:sufficient-for-(D')} that $K\subseteq[-m,m]$.)

Observe first that
$s^n\mid \wq^{(s)}$ and $t+\wq^{(s)}\Z\subseteq s^n\Z\setminus\wcH_n^{k_s}\subseteq s^n\Z\cap\cM_{S_n\cup\{\bar s^n\}}$  for each $t\in s^n\Z\setminus\wcH_n^{k_s}$.
Hence there is $b\in S_n\cup\{\bar s^n\}$ such that $s^n\vee b\mid\gcd(t,\wq^{(s)})$.
\begin{compactitem}[-]
\item If the first inclusion in \eqref{eq:inclusions-2-new} is strict, there exists
$t\in (s^n\Z\setminus\wcH_n^{k_s})\setminus\cM_{S_n}$. Hence $s^n\vee\bar s^n\mid\gcd(t,\wq^{(s)})$.
\item The same arguments apply when the roles of $s$ and $\bar s$ are interchanged.
\end{compactitem}
Therefore, if $2^n\Z\cap\cM_{S_n}\subsetneq 2^n\Z\setminus\wcH_n^{k_2}$ or $3^n\Z\cap\cM_{S_n}\subsetneq 3^n\Z\setminus\wcH_n^{k_{3}}$, then $3^n\mid\gcd(\wq^{(2)},\wq^{(3)})$ or $2^n\mid\gcd(\wq^{(2)},\wq^{(3)})$, respectively, and we are done.

It remains to treat the case where
$2^n\Z\cap\cM_{S_n}= 2^n\Z\setminus\wcH_n^{k_2}$ and $3^n\Z\cap\cM_{S_n}= 3^n\Z\setminus\wcH_n^{k_{3}}$. In this case $\wq^{(s)}$ is the smallest period of $s^n\Z\setminus\cM_{S_n}$, so $\wq^{(s)}=s^n\cdot\lcm((S_n^{\div s^n})^{prim})$ for $s=2,3$ by Lemma~\ref{lem:describe_period}a), where $S_n^{\div 2^n}=\{c_i,3^id_i: 1\le i\le n\}$ and $S_n^{\div 3^n}=\{2^ic_i,d_i: 1\le i\le n\}$.
\begin{compactitem}[-]
\item If there are infinitely many $i\in\N$ such that $c_i\nmid d_i$, then there are infinitely many $n\in\N$ such that $3^n\mid \lcm((S_n^{\div 2^n})^{prim})$, so that $3^n\mid\gcd(\wq^{(2)},\wq^{(3)})$.
\item Analogously, if there are infinitely many $i\in\N$ such that $d_i \nmid c_i$, then there are infinitely many $n\in\N$ such that $2^n\mid \lcm((S_n^{\div 3^n})^{prim})$, so that $2^n\mid\gcd(\wq^{(2)},\wq^{(3)})$.
\item It remains to treat the case where $c_i=d_i$ except for finitely many $i\in\N$, say for $i<N$. Then $c_N\cdot\dots\cdot c_n\mid \gcd(\wq^{(2)},\wq^{(3)})$ for all $n\ge N$.
\end{compactitem}
\end{compactitem}
We complete this example by showing (for suitable choices of $c_n=d_n$)
that conditions \eqref{Seh} and hence also \eqref{D} are violated:
Suppose that $\prod_{n\in\N}\left(1-\frac{1}{c_n}\right)>\frac{1}{2}$. Then, for any $n\geq1$, the equation $3^nT-2^nM=1$ has solutions $T,M$ with
\begin{equation*}
3^nT,2^nM
\in
\cM_{\cA_{S_n}^\infty}\setminus\cM_{S_n}
=
\left(2^n\Z\cup3^n\Z\right)\setminus\mathcal{M}_{\{c_1,\ldots,c_n\}},
\end{equation*}
so that there are holes with distance 1 (in the sense of \cite{Bulatek1990}), see
Proposition~\ref{des_holes}.
Indeed, there is a unique solution $X_0\in\{1,\dots,6^n-1\}$ of the equations $X\equiv 1\mod 2^n$ and $X\equiv 0\mod 3^n$. For $k=0,\dots,c_1\cdots c_n-1$ let $X_k=X_0+k6^n$, $T_k=\frac{X_k}{3^n}$ and $M_k=\frac{X_k-1}{2^n}$.
As all $X_k$ are further solutions of the same two equations, and as the $c_i$ are pairwise coprime and also coprime to $2$ and to $3$, exactly $\gamma_n:=c_1\cdots c_n\prod_{i=1}^n\left(1-\frac{1}{c_i}\right)$ of these solutions are $\{c_1,\dots,c_n\}$-free, and also exactly $\gamma_n$ of the $c_1\cdots c_n$ numbers $X_k-1$ are $\{c_1,\dots,c_n\}$-free. Hence exactly $\gamma_n$ of the numbers $T_k$ and $\gamma_n$ of the numbers $M_k$ are $\{c_1,\dots,c_n\}$-free. Since $2\gamma_n>c_1\cdots c_n$, there exists at least one $k\in\{0,\ldots,c_1\cdots c_n-1\}$ such that $T_k$ and $M_k$ are $\{c_1,\dots,c_n\}$-free. This proves the claim.

\end{example}
\subsection{Non-trivial centralizer}\label{subsec:non-trivial}
The purpose of this section is to treat simple examples of the type
\begin{equation}\label{eq:ex-B_1^N}
\cB_1^N=\{2^kc_k:k\in\N\}\cup\{2^{k-1}c_k^2: k\in\N, k<N\},
\end{equation}
where $N\in\N\cup\{\infty\}$, and the numbers $c_n$ are odd and pairwise coprime. Observe that $\cB_1^1$ and $\cB_1^2$ are the sets $\cB_1$ and $\cB_1'$ from the introduction, respectively.
Let $S_n=\{2^kc_k:k\leqslant n\}\cup\{2^{k-1}c_k^2: k\leqslant\min\{n,N-1\}\}$. Then {$\lcm(S_n)=2^nc_1^2\cdots c_n^2$ for $n<N$, $\lcm(S_n)=2^nc_1^2\cdots c_{N-1}^2c_N\ldots c_n$ for $n\geq N$} and $\cA_{S_n}=S_n\cup\{2^n\}$, so that $\cA_{S_n}\setminus S_n=\cA_{S_n}^{\infty}=\cA_{S_n}^{\infty,p}=\{2^n\}$. As $\min(\cA_{S_n}\setminus S_n)$ obviously tends to infinity, these are examples of Toeplitz type by Proposition~\ref{prop:thmB}d). Along the same lines as in Example~\ref{ex:2^i3^i} one can show that all periods are essential and $\wcH_n=\cH_n=2^n\Z\setminus\cM_{S_n}$.

Let $\wq_n$ be the smallest period of $\wcH_n=\cH_n$. We will show that $\wq_n=\frac{\lcm(S_n)}{c_1\ldots c_{N-1}}$. 
Notice that $2^nk\in\cM_{S_n}$ if and only if $2^nk\in\cM_{\{c_1,\ldots,c_n\}}$. So $\wcH_n=2^n\Z\setminus\cM_{\{c_1,\ldots,c_n\}}$. Since $c_1,\ldots,c_n$ are odd and pairwise coprime, Lemma~\ref{lem:describe_period}a) implies $\wq_n=2^nc_1\ldots c_n=\frac{\lcm(S_n)}{c_1\ldots c_{N-1}}$.

Notice that \eqref{eq:trivial-intersection-new} holds for $A_n=\cA_{S_n}^{\infty,p}$. By Proposition \ref{prop:invariance-new}, there exists a unique $k\in\Z$ such that
$\partial W+y_F\subseteq \partial W+\Delta(k)$, and  $\frac{\lcm(S_n)}{c_1\ldots c_{N-1}}=\wq_n\mid (y_F)_{S_n}-k$.
It follows at once that $y_F=\Delta(k)$ for $\cB_1^1$ and that $y_F-\Delta(k)$ has order at most
$\prod_{i=1}^{N-1}c_i$ in $G$ in case of $\cB_1^N$ and finite $N$, while
$y_F-\Delta(k)$ may have any order in case of $\cB_1^\infty$.

In the remainder of this subsection we show that such nontrivial centralizers as allowed above really exist.
To this end
fix $N\in\N\cup\{\infty\}$ and $1\leq\ell<N$, and consider $\cB_1^N$ as before. Denote $p_m=\lcm(S_m)$ for any $m\in\N$.
Let
\begin{equation}
q=\frac{p_\ell}{c_\ell}=2^\ell c_1^2\ldots c_{\ell-1}^2c_\ell
\end{equation}
We define $F_\ell\colon X_\eta\to\{0,1\}^\Z$ by
\begin{equation*}
(F_\ell x)_s=
\begin{cases}
x_s&\text{ if }s\not\in c_\ell\Z-\pi_{\lcm(S_\ell)}(x),\\
x_{s+q}&\text{ if }s\in c_\ell\Z-\pi_{\lcm(S_\ell)}(x).
\end{cases}
\end{equation*}
This map is continuous, because $\pi_{\lcm(S_\ell)}$, the $\ell$-th coordinate of the MEF map, is continuous.
Recall that the MEF map $\pi:X_\eta\to G$ is chosen such that $\pi(\eta)=\Delta(0)$.
As $\Delta(0)\in C_\phi$ \cite[Lemma 3.5]{KKL2016} for $\cB$-free Toeplitz subshifts, $|\pi^{-1}\{\pi(F(\eta))\}|=|\pi^{-1}\{\pi(\eta)\}|=1$, i.e. $\pi(F(\eta))\in C_\phi$ and, observing also Remark~\ref{remark:coding-function},
\begin{equation}\label{eq:F(eta)}
F(\eta)=\phi(\pi(F(\eta)))=\phi(y_F)=1_{\Z\setminus\bigcup_{b\in\cB}(b\Z-(y_F)_b)}.
\end{equation}

\begin{lemma}
Define $y\in G$ by
\begin{equation}\label{eq:yFl}
y_b=
\begin{cases}0&\text{if}\;b\neq 2^{\ell-1}c_{\ell}^2,\\
q&\text{if}\;b= 2^{\ell-1}c_{\ell}^2.
\end{cases}
\end{equation}
Then $y$ coincides with the rotation $y_{F_\ell}$ associated to $F_\ell$.
\end{lemma}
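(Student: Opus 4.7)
The plan is to prove $\phi(y)=F_\ell(\eta)$. Since $\phi$ is injective (Remark~\ref{remark:coding-function}) and $F_\ell(\eta)=\phi(y_{F_\ell})$ by~\eqref{eq:F(eta)}, this will identify $y$ with $y_{F_\ell}$. A brief preliminary check confirms $y\in G$: for $b,b'\neq 2^{\ell-1}c_\ell^2$ the compatibility $y_b\equiv y_{b'}\pmod{\gcd(b,b')}$ is trivial, while for $b'=2^{\ell-1}c_\ell^2$ it amounts to verifying that $\gcd(b,2^{\ell-1}c_\ell^2)$ divides $q=2^\ell c_1^2\cdots c_{\ell-1}^2c_\ell$ for every $b\in\cB_1^N$, which is immediate from the coprimality of the $c_k$. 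Unpacking Remark~\ref{remark:coding-function} together with~\eqref{eq:yFl}, $\phi(y)_s=1$ iff $s\notin b\Z$ for every $b\in\cB':=\cB_1^N\setminus\{2^{\ell-1}c_\ell^2\}$ and $s+q\notin 2^{\ell-1}c_\ell^2\Z$; and $\pi_{p_\ell}(\eta)=0$ yields $F_\ell(\eta)_s=\eta_s$ when $s\notin c_\ell\Z$, and $F_\ell(\eta)_s=\eta_{s+q}$ when $s\in c_\ell\Z$.

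Next I carry out the pointwise comparison. If $s\notin c_\ell\Z$, then both $s$ and $s+q$ lie outside $2^{\ell-1}c_\ell^2\Z\subseteq c_\ell\Z$ (using $q\in c_\ell\Z$), so $\phi(y)_s=1$ reduces to $s\notin\cM_{\cB'}$, and since $\cM_{\cB_1^N}=\cM_{\cB'}\cup 2^{\ell-1}c_\ell^2\Z$ with $s\notin 2^{\ell-1}c_\ell^2\Z$, this is equivalent to $\eta_s=1$. If $s\in c_\ell\Z$, the same decomposition of $\cM_{\cB_1^N}$ reduces the required identity $\phi(y)_s=\eta_{s+q}$ to the single assertion that $c_\ell\Z\cap\cM_{\cB'}$ is $q$-periodic.

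The hard part is this periodicity. Writing $s=c_\ell t$ transforms it into the $(q/c_\ell)$-periodicity of $\cM_{\cB''}$, where $\cB'':=\{b/\gcd(b,c_\ell):b\in\cB'\}$. Since $2^\ell\in\cB''$ (coming from $b=2^\ell c_\ell$) and every element $b/\gcd(b,c_\ell)$ arising from an index $k>\ell$ (namely $2^kc_k$ or $2^{k-1}c_k^2$) is a proper multiple of $2^\ell$, the primitive subset of $\cB''$ reduces to the finite set
\[
(\cB'')^{prim}=\{2^\ell\}\cup\{2^kc_k:1\le k<\ell\}\cup\{2^{k-1}c_k^2:1\le k<\ell\},
\]
whose least common multiple equals $2^\ell c_1^2\cdots c_{\ell-1}^2=q/c_\ell$. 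By \cite[Lemma~5.1b)]{KKL2016}, the fact invoked in the proof of Lemma~\ref{lem:describe_period}a), this LCM is the minimal period of $\cM_{(\cB'')^{prim}}=\cM_{\cB''}$, finishing the proof.
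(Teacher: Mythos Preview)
Your proof is correct and follows the same overall strategy as the paper: show $\phi(y)=F_\ell(\eta)$ and invoke injectivity of $\phi$, splitting into the cases $s\notin c_\ell\Z$ and $s\in c_\ell\Z$. The first case is handled identically in both.

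The treatment of the case $s\in c_\ell\Z$ is where the two arguments diverge. The paper introduces the auxiliary set $T=\{2c_1,\ldots,2^\ell c_\ell,c_1^2,\ldots,2^{\ell-2}c_{\ell-1}^2\}$ with $q=\lcm(T)$ and performs an explicit case analysis on which $b\in\cB$ divides $s+q$, distinguishing $b\in T$, $b=2^{\ell-1}c_\ell^2$, and $b\notin T\cup\{2^{\ell-1}c_\ell^2\}$. You instead reduce the question to the $q$-periodicity of $c_\ell\Z\cap\cM_{\cB'}$, transform via $s=c_\ell t$ to the $(q/c_\ell)$-periodicity of $\cM_{\cB''}$, and then compute $(\cB'')^{prim}$ and its $\lcm$ to conclude via the general period formula for sets of multiples. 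Your route is more systematic and makes transparent \emph{why} the specific value of $q$ appears (as $c_\ell$ times the period of $\cM_{\cB''}$), whereas the paper's hands-on verification is shorter but somewhat more ad hoc. Both are equally valid; your added preliminary check that $y$ actually lies in $G$ is a nice touch the paper leaves implicit.
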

\begin{proof}
As $\pi_{\lcm(S_\ell)}(\eta)=0$,
\begin{equation*}
(F_\ell\eta)_s
=
\begin{cases} \eta_s&\text{if}\;s\notin c_{\ell}\Z,\\
\eta_{s+q}&\text{if}\;s\in c_{\ell}\Z.
\end{cases}
\end{equation*}
In view of \eqref{eq:F(eta)} and of the injectivity of $\phi$ (see Remark~\ref{remark:coding-function}) we must show that $(F_\ell\eta)_s=0$ if and only if $s\in\bigcup_{b\in\cB}(b\Z-y_b)$ for all $s\in\Z$.\\
1) Assume that $s\notin c_{\ell}\Z$. Then
\begin{equation*}
(F_\ell\eta)_s=0\Leftrightarrow \eta_s=0 \Leftrightarrow s\in\cM_{\cB} \Leftrightarrow
s\in 2^{\ell-1}c_\ell^2\Z\cup\bigcup_{b\in\cB\setminus\{2^{\ell-1}c_\ell^2\}}(b\Z-y_b).
\end{equation*}
As $s\not\in c_\ell\Z$ by assumption and $s+q\not\in c_\ell\Z$ because $c_\ell\mid q$,
this is equivalent to
$
s\in \bigcup_{b\in\cB}(b\Z-y_b).
$
\\
2) Now assume that $s\in c_{\ell}\Z$.
Let $T=\{2c_1,\ldots,2^{\ell}c_{\ell}, c_1^2,\ldots,2^{\ell-2}c_{\ell-1}^2\}$. Observe that $q=\lcm(T)$ and $T$ is saturated. Observe also that
$(F_\ell\eta)_s=0$ if and only if $s+q\in\cM_{\cB}$.

So let $b\in\cB$ and assume that $b\mid s+q$.
\begin{compactitem}
\item[-] If $b\in T$ then $b\mid q$, hence $s\in b\Z=b\Z-y_b$.
\item[-] If $b=2^{\ell-1}c^2_{\ell}$ then  $s\in b\Z-q=b\Z-y_b$.
\item[-] If $b\in \cB\setminus(T\cup\{2^{\ell-1}c_{\ell}^2\})$ then $2^{\ell}\mid b$ and, as $c_{\ell}\mid s$ and $2^{\ell}c_{\ell}\mid q$, it follows that $s\in 2^{\ell}c_{\ell}\Z=2^{\ell}c_{\ell}\Z-y_{2^{\ell}c_{\ell}}$.
\end{compactitem}
We have proved that
$
s\in\bigcup_{b\in\cB}(b\Z-y_b).
$

It remains to prove that if  $s\in c_{\ell}\Z$ and $s\in\bigcup_{b\in\cB}(b\Z-y_b)$, then $s+q\in\cM_{\cB}$. Indeed,
\begin{eqnarray*}
s+q
&\in &
\left(c_{\ell}\Z\cap \bigcup_{b\in\cB}(b\Z-y_b)\right)+q\\
&=&
\left(\bigcup_{b\in T}(b\vee c_{\ell})\Z+q\right)\
\cup\ 2^{\ell-1}c^2_{\ell}\Z\
\cup\ \left(\bigcup_{b\in\cB\setminus (T\cup\{2^{\ell-1}c^2_{\ell}\})}c_{\ell}b\Z+q\right)\\
&\subseteq&
q\Z\ \cup\ 2^{\ell-1}c^2_{\ell}\Z\ \cup\ 2^\ell c_\ell\Z\subseteq\cM_\cB,
\end{eqnarray*}
because $\lcm(T\cup\{c_{\ell}\})=q$ and $2^{\ell}\mid b$ for each
$b\in\cB\setminus (T \cup\{2^{\ell-1}c_{\ell}^2\})$.
\end{proof}
\begin{proposition}\label{prop:non-trivial_fin_ord}
For each $1\leqslant\ell<N\leqslant\infty$, the map $F_\ell$ belongs to the centralizer of the $\cB_1^N$-free subshift. It satisfies $F_\ell^{c_\ell}=\id_{X_\eta}$, but $F_\ell^i\neq\id_{X_\eta}$ for all $1\leqslant i<c_\ell$.
\end{proposition}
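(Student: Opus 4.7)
The plan is to promote $F_\ell$ to an automorphism of $(X_\eta,\sigma)$ of order $c_\ell$ by first verifying that the definition really produces a continuous shift-equivariant self-map of $X_\eta$, and then determining the order via the explicit rotation $y_{F_\ell}\in G$ supplied by the preceding lemma.

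For continuity: the map $x\mapsto\pi_{\lcm(S_\ell)}(x)\in\Z/p_\ell\Z$ is continuous into a discrete group, hence locally constant; on each of its level sets $F_\ell$ coincides with one of the two continuous maps $x\mapsto x$ or $x\mapsto\sigma^q x$. For commutation with $\sigma$: using $\pi_{\lcm(S_\ell)}(\sigma x)=\pi_{\lcm(S_\ell)}(x)+1$, both $(\sigma F_\ell x)_s=(F_\ell x)_{s+1}$ and $(F_\ell\sigma x)_s$ evaluate $x$ at $s+1$ or at $s+1+q$ according to the same condition $s+1\in c_\ell\Z-\pi_{\lcm(S_\ell)}(x)$. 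For $F_\ell(X_\eta)\subseteq X_\eta$: the preceding lemma already yields $F_\ell(\eta)=\phi(y)\in X_\eta$, so shift-equivariance gives $F_\ell(\sigma^k\eta)=\sigma^k F_\ell(\eta)\in X_\eta$ for every $k\in\Z$, and continuity together with closedness of $X_\eta$ extends this to the whole space.

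Once $F_\ell\in\Aut_\sigma(X_\eta)$ with rotation $y_{F_\ell}=y$ as in~\eqref{eq:yFl}, the rotation of $F_\ell^j$ is $jy_{F_\ell}$. All coordinates of $jy_{F_\ell}$ vanish except at $b=2^{\ell-1}c_\ell^2$, where the value is $jq\bmod 2^{\ell-1}c_\ell^2$; since $q=2^\ell c_1^2\cdots c_{\ell-1}^2 c_\ell$, the quotient $jq/(2^{\ell-1}c_\ell^2)=2jc_1^2\cdots c_{\ell-1}^2/c_\ell$ is an integer precisely when $c_\ell\mid j$, using that $c_\ell$ is odd and coprime to $c_1,\dots,c_{\ell-1}$. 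Thus $jy_{F_\ell}=0$ in $G$ iff $c_\ell\mid j$.

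To conclude, for $j=c_\ell$ we get $\pi(F_\ell^{c_\ell}(\eta))=c_\ell y_{F_\ell}+\pi(\eta)=0$; since $\Delta(0)\in C_\phi$ for $\cB$-free Toeplitz subshifts by~\cite[Lemma~3.5]{KKL2016}, the fibre $\pi^{-1}\{0\}=\{\eta\}$ is a singleton and hence $F_\ell^{c_\ell}(\eta)=\eta$. Combining shift-equivariance with density of the orbit of $\eta$ and continuity extends $F_\ell^{c_\ell}=\id_{X_\eta}$ to all of $X_\eta$; in particular $F_\ell$ is invertible with inverse $F_\ell^{c_\ell-1}$. For $1\leqslant j<c_\ell$ the rotation $jy_{F_\ell}\ne 0$ prevents $F_\ell^j=\id_{X_\eta}$, since the identity has zero rotation, so the order of $F_\ell$ is exactly $c_\ell$. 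The main obstacle is arithmetic rather than conceptual: the whole order statement reduces to the single divisibility observation above, while the dynamical bookkeeping is routine once one leans on the preceding lemma.
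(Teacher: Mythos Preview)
Your continuity argument misreads the definition of $F_\ell$: on a level set $\{\pi_{\lcm(S_\ell)}=r\}$ the map is not one of $x\mapsto x$ or $x\mapsto\sigma^q x$ but a coordinate-wise splice of the two (the $s$-th output is $x_s$ or $x_{s+q}$ according to whether $s+r\in c_\ell\Z$). Continuity still holds, since each output coordinate depends only on $x_s$, $x_{s+q}$ and the locally constant value $\pi_{\lcm(S_\ell)}(x)$, but your stated reason is wrong.

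The more substantial gap is the step ``the preceding lemma already yields $F_\ell(\eta)=\phi(y)\in X_\eta$''. The lemma's \emph{statement} identifies $y$ with $y_{F_\ell}$, which presupposes $F_\ell\in\Aut_\sigma(X_\eta)$ --- circular if invoked here. What the lemma's \emph{proof} establishes non-circularly is only the formula $F_\ell(\eta)=\phi(y)$; you then still owe an argument that $\phi(y)\in X_\eta$, and $\phi(G)\subseteq X_\eta$ is not automatic for Toeplitz coding maps. The gap can be closed by checking $y\in C_\phi$: since $y_{2c_1}=0$ one has $y_{S_n}\in 2c_1\Z\subseteq\cM_{S_n}$, so $y\notin\partial W$; and since $y_{2^nc_n}=0$ gives $2^n\mid y_{S_n}$, the condition $y+\Delta(k)\in\partial W$ would force $2^n\mid k$ for all $n$, i.e.\ $k=0$. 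Hence $y\in C_\phi$ and $\phi(y)\in\pi^{-1}\{y\}\subseteq X_\eta$. The paper sidesteps this entirely: it proves $F_\ell(\eta)\in X_\eta$ directly by producing, for each $n$, an integer $z$ via the CRT with $(F_\ell\eta)[-n,n]=\eta[z-n,z+n]$, never invoking $\phi$ or $C_\phi$. With the gap filled your route is a legitimate, more conceptual alternative; the order computation via the rotation $y_{F_\ell}$ then matches the paper's.
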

\begin{proof}
We show first that $F_\ell(\eta)\in X_{\eta}$.
Recall that $\pi_{\lcm(S_\ell)}(\eta)=0$.
Fix $n\in \N$ and choose $t\geqslant\ell$ such that $2^t>n$.
As $\gcd(p_\ell,\frac{p_t}{c_\ell})=q$, there exists $z\in \Z$ such that
\begin{equation}\label{eq:kongruencje2}
z\equiv 0\mod \frac{p_t}{c_\ell}\quad\text{ and }\quad
z\equiv  q \mod p_\ell.
\end{equation}
We claim that $(F_\ell\eta)[-n,n]=\eta[-n+z,n+z]$. Let $s\in[-n,n]$. There are two cases:
\\[2mm]
1) $s\notin c_\ell\Z$.
Then $2^{\ell-1}c_\ell^2$ and $2^\ell c_\ell$ do not divide neither $s$ nor $s+z$ by the second of the congruences~\eqref{eq:kongruencje2}.
\begin{compactitem}[ -]
\item
Assume that $\eta_s=0$. Then $2^{j-\varepsilon}c_j^{1+\varepsilon}\mid s$ for some $j\neq\ell$ and $\varepsilon\in\{0,1\}$. Observe that $j\leqslant t$, since $|s|\le n<2^t$. Then $2^{j-\varepsilon}c_j^{1+\varepsilon}\mid s+z$ by the first of the congruences \eqref{eq:kongruencje2}, hence $\eta_{s+z}=0$.
\item
Conversely, assume that $\eta_{s+z}=0$. Then
$2^{j-\varepsilon}c_j^{1+\varepsilon}\mid s+z$ for some $j\neq\ell$ and $\varepsilon\in\{0,1\}$, and if $j\le t$ then $2^{j-\varepsilon}c_j^{1+\varepsilon}\mid s$
by the first of the congruences \eqref{eq:kongruencje2}, hence $\eta_s=0$. But $j>t$ is impossible, because then $2^t\mid s+z$, so that $2^t\mid s$ by the first of the congruences \eqref{eq:kongruencje2} again, in contradiction to $0<|s|<2^t$.
\end{compactitem}
We have shown that $\eta_{s+z}=\eta_s=(F_\ell\eta)_s$.
\\[2mm]
2) $s\in c_\ell\Z$. We use the fact that $z\equiv   q \mod p_\ell$ in view of \eqref{eq:kongruencje2} repeatedly.
\begin{compactitem}[ -]
\item
Assume that $\eta_{s+q}=0$, so that $2^{j-\varepsilon}c_j^{1+\varepsilon}\mid s+q$ for some $j\in\N$ and $\varepsilon\in\{0,1\}$. If $j\le\ell$ then $2^{j-\varepsilon}c_j^{1+\varepsilon}\mid s+z$ because $z\equiv   q \mod p_\ell$. If $j>\ell$ then $2^{\ell}\mid s+q$, hence $2^{\ell}\mid s$ and $2^{\ell}c_\ell\mid s$. As
$z\equiv   q \mod p_\ell$, also
$2^\ell c_\ell\mid z$, so that $2^\ell c_\ell\mid s+z$. In  both cases $\eta_{s+z}=0$.
\item
Conversely, assume that  $\eta_{s+z}=0$. The same arguments as before, with roles of $q$ and $z$ interchanged, show that $\eta_{s+q}=0$.
\end{compactitem}
We have shown that $\eta_{s+z}=\eta_{s+q}=(F_\ell\eta)_s$, thus the claim follows.

As $\pi(\eta)=\Delta(0)$, we have $\pi(\sigma^k\eta)=\Delta(k)$ for any $k\in\Z$. So $\pi_{\lcm(S_\ell)}(\sigma^k\eta)=k\mod p_\ell$ for any $k\in\Z$. Hence for any $k\in\Z$
\begin{equation*}
    (F_\ell(\sigma^k\eta))_s=\begin{cases}
(\sigma^k\eta)_s&\text{ if }s\not\in c_\ell\Z-k,\\
(\sigma^k\eta)_{s+q}&\text{ if }s\in c_\ell\Z-k
\end{cases}=\begin{cases}
\eta_{s+k}&\text{ if }s+k\not\in c_\ell\Z,\\
\eta_{s+q+k}&\text{ if }s+k\in c_\ell\Z
\end{cases}=(F_\ell\eta)_{s+k}=(\sigma^k(F_\ell\eta))_s.
\end{equation*} So $F_\ell(\sigma^k\eta)\in X_\eta$ for each $k\in\Z$. The denseness of the orbit of $\eta$ and the continuity of $F_\ell$ imply that $F_\ell(X_\eta)\subseteq X_\eta$ and $F_\ell$ commutes with $\sigma$.

Since $F_{\ell}$ corresponds to $y_{F_\ell}$ given by (\ref{eq:yFl}), and $q$ has order $c_{\ell}$ in the group $\Z/2^{\ell-1}c^2_{\ell}\Z$, it follows that $F_{\ell}$ has order $c_{\ell}$.

This proves in particular that $F_\ell$ is a homeomorphism.
\end{proof}
\begin{remark}\label{rem:finite-order}
Consider the case $N<\infty$.
As the numbers $c_1,\ldots,c_{N-1}$ are pairwise coprime, the group generated by the automophisms $F_{1},\ldots,F_{{N-1}}$ is cyclic of order $c_1\ldots c_{N-1}$. Let $F$ be a generator of this group.  Then, in view of Corollary~\ref{coro:simple-case}, $\Aut_\sigma(X_\eta)=\langle\sigma\rangle\oplus\langle F\rangle$ in the case of $\cB^{N}_1$.
\end{remark}
\begin{corollary}\label{coro:large-centralizer}
Consider $\cB_1^\infty$. Proposition~\ref{prop:non-trivial_fin_ord} shows that
the group $Aut_{\sigma}(X_{\eta})/\langle\sigma\rangle$ contains the infinite direct sum of finite cyclic groups
\begin{equation*}
\Z/c_1\Z\oplus\Z/c_2\Z\oplus\ldots\oplus\Z/c_\ell\Z\oplus\ldots\, .
\end{equation*}
\end{corollary}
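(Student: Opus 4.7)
}
The plan is to build an injective homomorphism
\[
\Phi\colon \bigoplus_{\ell\in\N}\Z/c_\ell\Z\;\longrightarrow\;\Aut_\sigma(X_\eta)/\langle\sigma\rangle,
\qquad \Phi\bigl((n_\ell)_\ell\bigr)=\Bigl[\prod_\ell F_\ell^{n_\ell}\Bigr],
\]
where $[\,\cdot\,]$ denotes the coset modulo $\langle\sigma\rangle$. By Proposition~\ref{prop:non-trivial_fin_ord} the $F_\ell$ have order $c_\ell$ in $\Aut_\sigma(X_\eta)$, so if $\Phi$ is a well-defined injective group homomorphism then the proof is complete. The two things to verify are therefore (i) that $\Phi$ actually defines a homomorphism (needs the cosets $[F_\ell]$ to commute), and (ii) that $\Phi$ is injective.

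For (i), recall from \cite[Lemma~2.4]{Donoso2015} that the assignment $F\mapsto y_F$ descends to an injection $\Aut_\sigma(X_\eta)/\langle\sigma\rangle \hookrightarrow G/\Delta(\Z)$. Since $G$ is abelian, so is the quotient, hence the cosets $[F_\ell]$ commute pairwise and $\Phi$ is a well-defined group homomorphism. It therefore suffices to check injectivity of the induced map into $G/\Delta(\Z)$, i.e.\ to show: if $\sum_\ell n_\ell y_{F_\ell}=\Delta(m)$ in $G$ for some $m\in\Z$ and integers $n_\ell$ with $0\leqslant n_\ell<c_\ell$ and $n_\ell=0$ for $\ell$ large, then $m=0$ and all $n_\ell=0$.

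For (ii), I would work coordinate by coordinate in the identification $G\hookrightarrow\prod_{b\in\cB_1^\infty}\Z/b\Z$ described in Subsection~\ref{sec:topological-preparations}. Recall from \eqref{eq:yFl} that $y_{F_\ell}$ has coordinate $q_\ell=p_\ell/c_\ell=2^\ell c_1^2\cdots c_{\ell-1}^2 c_\ell$ at $b=2^{\ell-1}c_\ell^2$ and $0$ at every other $b\in\cB_1^\infty$. In particular, for every $k$ the $(2^k c_k)$-coordinate of each $y_{F_\ell}$ is $0$, hence the $(2^k c_k)$-coordinate of $\sum_\ell n_\ell y_{F_\ell}=\Delta(m)$ is $0$; this forces $m\equiv 0\bmod 2^kc_k$ for every $k\in\N$. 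Since the $c_k$ are pairwise coprime with $c_k\geqslant 3$, the only integer divisible by infinitely many of them is $0$, so $m=0$. Plugging this back in and looking at the $2^{\ell-1}c_\ell^2$-coordinate we get $n_\ell q_\ell\equiv 0\bmod 2^{\ell-1}c_\ell^2$; since $q_\ell/c_\ell$ is coprime to $c_\ell$, this yields $c_\ell\mid n_\ell$, hence $n_\ell=0$ by the range constraint.

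The one subtle point — indeed essentially the only thing beyond bookkeeping — is the commutativity claim in (i): it does not follow formally from the group being generated by torsion elements, but it comes for free from the embedding into $G/\Delta(\Z)$. Once this is noted, the coordinate-wise divisibility computation in (ii) is immediate from the explicit description~\eqref{eq:yFl} of $y_{F_\ell}$ and finishes the argument.
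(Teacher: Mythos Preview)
Your proof is correct and fills in precisely the details the paper leaves implicit; the paper states the corollary without further argument, treating it as immediate from Proposition~\ref{prop:non-trivial_fin_ord} together with the pairwise coprimality of the $c_\ell$. Your use of the embedding $\Aut_\sigma(X_\eta)/\langle\sigma\rangle\hookrightarrow G/\Delta(\Z)$ (via $F\mapsto y_F$) to obtain commutativity, followed by the coordinate-wise computation using~\eqref{eq:yFl} to prove injectivity, is exactly the natural way to make this rigorous.
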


\begin{remark}
One can show that the element $y\in G$ given by \eqref{eq:yFl} satisfies the sufficient conditions from \cite[Theorem 1]{Bulatek1990} for representing a (non-trivial) element of the centralizer of the $\cB_1^N$-free subshift. But the methods from \cite{Bulatek1990} do not limit the order of elements from this centralizer as in Theorem~\ref{theo:basic-case}. It is shown in~\cite{Bulatek1990} that
Toeplitz subshifts with skeletons with equidistant holes have only
elements of finite order in their centralizer. But this does not apply to minimal $\cB$-free subshifts, see Proposition~\ref{des_holes}.
\end{remark}
\subsection{Holes versus essential holes: examples}
We start with an example for which there is \emph{no period structure such that all holes are essential and the centralizer is trivial}.

\begin{example}\label{ex:not_all_holes_new}
Assume that $c_1,\ldots,c_n,\ldots,q_1,\ldots,q_n,\ldots$ are pairwise coprime natural numbers. Let
\begin{equation*}
b_1=q_1c_1,\,b_2=q_2c_2,\,b_3=q_1q_3c_3,\, b_4=q_1q_2q_4c_4,\ldots,b_m=q_1\ldots q_{m-2}q_mc_m,\ldots
\end{equation*}
and set $\cB=\{b_m:m\in\N\}$. Let $(p_n)$ be any period structure for $\eta=\eta_{\cB}$. Observe, that for every $m\in\N$ there exists $n,n'\in\N$ such that
\begin{equation*}
p_m\mid q_1\ldots q_nc_1\ldots c_n\;\text{and}\;q_1\ldots q_mc_1\ldots c_m|p_{n'}
\end{equation*}
Fix $N$ such that $q_1|p_N$ and let
\begin{equation*}
m=\max\{i\in\N:q_1\ldots q_i|p_N\}.
\end{equation*}
Then $q_{m+1}\nmid p_N$ hence
\begin{equation*}
k:=\gcd(b_{m+1},p_N)=q_1\ldots q_{m-1}c_{m+1}^{\varepsilon},
\end{equation*}
where $\varepsilon\in\{0,1\}$.
Note that
\begin{equation}\label{eq:newex2}
(k+p_N\Z)\cap q_m\Z=\emptyset.
\end{equation}
We claim that $k\in\cH_N\setminus \wcH_N$. Clearly, $k\in\cF_{\cB}$, so $\eta_k=1$. By the definition of $k$ it follows that $(k+p_N\Z)\cap b_{m+1}\Z\neq \emptyset$, thus $\eta$ is not constant along $k+p_N\Z$, hence $k\in \cH_N$.
Now take $n>N$ such that
\begin{equation}\label{eq:newex1}
q_1\ldots q_{m+1}c_1\ldots c_{m+1}|p_n.
\end{equation}
We claim that $(k+p_N\Z)\cap \cH_n=\emptyset$. Let $l\in \Z$ and assume first that $k+lp_N\in\cF_{\cB}$. Suppose that
$k+lp_N+l'p_n\in\cM_{\cB}$ for some $l'\in\Z$. Assume that $b|k+lp_N+l'p_n$ for some $b\in\cB$. By (\ref{eq:newex2}) and (\ref{eq:newex1}) $q_m\nmid b$, so $b\in\{b_1,\ldots,b_{m+1}\}$. But then, by (\ref{eq:newex1}), $b| k+lp_N$, a contradiction. It follows that $k+lp_N\notin\cH_n$.
Now assume that $k+l p_N\in\cM_{\cB}$ and  $b|k+l p_N$ for some $b\in \cB$. Then by (\ref{eq:newex2}) $b\in\{b_1,\ldots,b_{m+1}\}$, hence $b\mid p_n$ and
$k+lp_N+p_n\Z\subseteq \cM_{\cB}$ by (\ref{eq:newex1}). Again we see that $k+l p_N\notin\cH_n$. The claim follows.

Let $S_n=\{b_1,\ldots,b_n\}$. Then $p_n:=\lcm(S_n)=q_1\ldots q_nc_1\ldots {c_n}$ defines a period structure,
${\cA_{S_n}\setminus S_n}=\{q_1\ldots q_{n-1},q_1\ldots q_n\}$ and $\cA_{S_n}^\infty=\{q_1\ldots q_n\}$.
By Proposition \ref{des_holes}, with respect to this period structure, $\cH_n=q_1\ldots q_{n-1}\Z\setminus\cM_{S_n}$. Since $c_m\nmid \ell_{S_N}\vee q_1\ldots q_n$ for any $m,n>N$, $\ell_{S_N}\vee q_1\ldots q_n\in\mathcal{F}_{\cB\setminus S_N}$. So a) from Lemma~\ref{lemma:essential_holes_GH} holds. Hence $\wcH_n=q_1\ldots q_n\Z\setminus\cM_{S_n}$.
Since $S_n^{\div q_1\ldots q_n}=\{c_1,\ldots,c_n\}$ and $S_n^{\div q_1\ldots q_{n-1}}=\{c_1,\ldots,c_{n-1},q_nc_n\}$ are both
primitive,
Lemma \ref{lem:describe_period}  shows that the minimal periods of $\wcH_n$ and $\cH_n$ are both equal $p_n=q_1\ldots q_nc_1\ldots c_n$, although $\wcH_n\neq \cH_n$. Proposition~\ref{prop:invariance-new} shows that the centralizer of $X_\eta$ is trivial.
\end{example}

We continue with an example for which \emph{the validity of the identities $\wcH_n=\cH_n$ depends on the choice of the period structure.}
\begin{example}\label{ex:holes} Assume that we have a collection $\{q_i,c_i,d_i: i\ge 1\}$ of pairwise coprime odd natural numbers greater than 1. Let
\begin{equation*}
b_i=2^iq_ic_i,\; b'_i=2^iq_id_i,\; b''_i=2^{i+1}q_i
\end{equation*}
for $i\ge 1$ and
\begin{equation*}
S_n=\{b_i,b'_i, b''_i:1\le i\le n\}, \; S'_n=S_n\cup \{b_{n+1}\}
\end{equation*}
for $n\ge 1$. Finally, we set
\begin{equation*}
\cB=\bigcup_{n\ge 1}S_n= \bigcup_{n\ge 1}S'_n.
\end{equation*}
Clearly, $\cB$ contains no scaled copy of an infinite coprime set.

The sets $S_n$ and $S'_n$ are saturated and
\begin{equation}
\lcm(S_n)=2^{n+1}\cdot q_1\cdot\ldots \cdot q_n\cdot c_1\cdot\ldots \cdot c_n\cdot d_1\cdot\ldots\cdot d_n, \; \lcm(S'_n)=q_{n+1}c_{n+1}\lcm(S_n)
\end{equation}
hence
\begin{equation}\label{eq:bez_prim}
\cA_{S_n}\setminus S_n=\cA^{\infty}_{S_n}=\{2^{n+1}\}
\end{equation}
and
\begin{equation}\label{eq:prim}
\cA_{S'_n}\setminus S'_n=\{2^{n+1}, 2^{n+1}q_{n+1}\},\;   \cA^{\infty}_{S'_n}=\{2^{n+1}\}.
\end{equation}
It follows  that $\wcH_N=\cH_N=2^{N+1}\left(\Z\setminus\cM_{\{q_1,\dots,q_N\}}\right)$, where the sets of holes and essential holes are calculated with respect to the period structure $p_n=\lcm(S_n)$.
Indeed,
\begin{equation*}
\cH_N=2^{N+1}\Z\setminus\cM_{S_N}=2^{N+1}\left(\Z\setminus\cM_{(S_N^{\div 2^{N+1}})^{prim}}\right)
=2^{N+1}\left(\Z\setminus\cM_{\{q_1,\dots,q_N\}}\right)
\end{equation*}
by (\ref{eq:bez_prim}) and Proposition \ref{des_holes}.
Suppose that $b\mid\ell_{S_N}\vee 2^{n+1}$ for some $b\in\cB$ and some $n>N$. Since $q_i\nmid\ell_{S_N}\vee 2^{n+1}$ for any $i>N$, $b\in S_N$. So a) from Lemma~\ref{lemma:essential_holes_GH} holds and the assertion follows.
By Lemma~\ref{lem:describe_period}a), we obtain $\wq_N=\tau_N=2^{N+1}q_1\cdots q_N$.

Let $\cH'_n$, $\wcH'_n$ be the sets of holes and essential holes, respectively, calculated with respect to the period structure $p'_n=\lcm(S'_n)$.
We will show that
\begin{equation*}
\cH_N'=2^{N+1}\left(\Z\setminus\cM_{\{q_1,\dots,q_N,q_{N+1}c_{N+1}\}}\right)\quad\text{and}\quad
\wcH_N'=2^{N+1}\left(\Z\setminus\cM_{\{q_1,\dots,q_N,q_{N+1}\}}\right),
\end{equation*}
so that $\wq_N'=\tau_N'/c_{N+1}=2^{N+1}q_1\cdots q_{N+1}$ by Lemma~\ref{lem:describe_period}a).

Indeed,
\begin{equation*}
\cH_N'=2^{N+1}\Z\setminus\cM_{S_N'}=2^{N+1}\left(\Z\setminus\cM_{(S_N'^{\div 2^{N+1}})^{prim}}\right)
=2^{N+1}\left(\Z\setminus\cM_{\{q_1,\dots,q_N,q_{N+1}c_{N+1}\}}\right)
\end{equation*}
by (\ref{eq:prim}) and Proposition \ref{des_holes}.
Let $n>N$. (Notice that a) from Lemma~\ref{lemma:essential_holes_GH} does not hold because $2^{N+2}q_{N+1}\mid2^{n+1}\vee\lcm(S_N')$ and $2^{N+2}q_{N+1}\not\in S_N'$.) Suppose that for some $k\in\Z\setminus\cM_{\{q_1,\dots,q_N,q_{N+1}c_{N+1}\}}$
\begin{equation*}
(2^{N+1}k+\lcm(S'_N)\Z)\cap \cH'_n=\emptyset.
\end{equation*}
Then, in particular,
\begin{equation*}
(2^{N+1}k+\lcm(S'_N)\Z)\cap2^{n+1}\Z\subseteq \cM_{S_n'}.
\end{equation*}
By Lemma \ref{finite_mult}, there exists $b\in S_n'$ such that
\begin{equation*}
b\mid\gcd(2^{N+1}k,\lcm(S_N'))\vee2^{n+1}=\gcd(2^{N+1}k\vee 2^{n+1},\lcm(S_N')\vee2^{n+1}).
\end{equation*}
Since $k\in\Z\setminus\cM_{\{q_1,\dots,q_N,q_{N+1}c_{N+1}\}}$ and $b\mid 2^{N+1}k\vee2^{n+1}$, we have $b\in S_n'\setminus S_N'$. On the other hand, $b\mid\lcm(S_N')\vee2^{n+1}$ but
$q_i\nmid \lcm(S_N')\vee2^{n+1}$ for any $i>N+1$, $d_{N+1}\nmid \lcm(S_N')\vee2^{n+1}$ and $2^{N+1}q_{N+1}c_{N+1}\in S_N'$. So $b=2^{N+2}q_{N+1}$ and $q_{N+1}\mid k$.
Conversely,
\begin{equation*}
(2^{N+1}q_{N+1}m+\lcm(S'_N)\Z)\cap2^{n+1}\subseteq 2^{n+1}q_{N+1}\Z\subseteq 2^{N+2}q_{N+1}\Z\subseteq\cM_{S_n'}
\end{equation*}
for any $m\in\Z$. Hence
\begin{equation*}
\begin{split}
\wcH_N'&=2^{N+1}\Z\setminus\cM_{S_N'\cup\{2^{N+1}q_{N+1}\}}=2^{N+1}\left(\Z\setminus\cM_{((S_N'\cup\{2^{N+1}q_{N+1}\})^{\div 2^{N+1}})^{prim}}\right)\\
&=
2^{N+1}\left(\Z\setminus\cM_{\{q_1,\dots,q_N,q_{N+1}\}}\right).
\end{split}
\end{equation*}

For both filtrations property \eqref{eq:trivial-intersection-new} from Subsection~\ref{subsec:additional-structure} is obviously satisfied so that Proposition~\ref{prop:invariance-new} applies. But $p_N/\wq_N=c_1\cdots c_N\cdot d_1\cdots d_N$ and $p_N'/\wq_N'=c_1\cdots c_{N+1}\cdot d_1\cdots d_N$ are both unbounded in~$N$, so that only very weak conclusions can be drawn from this proposition. In particular, no bound on the size of the centralizer can be deduced from it.
\end{example}

Finally we provide an example for which \emph{$\wcH_n\subsetneq\cM_{\cA_{S_n}^\infty}\setminus\cM_{S_n}$ for any saturated filtration $(S_n)$ of $\cB$, so that $S_n\subsetneq S_n(a)$ for some $a\in\cA_{S_n}^{\infty}$ (hence also for some $a\in\cA_{S_n}^{\infty,p}$)}, see Theorem~\ref{theo:ess-holes-arithmetic} and Remark~\ref{remark:sets_multiples}a).
\begin{example}\label{ex:not-always-GH}
Let $\np_i, \np'_i, q_i, r_i, d_{i-1}$ $(i\in \N)$ be pairwise different primes.
Let
\begin{equation}
b_1=\np_1\cdot \np'_1\cdot q_1\cdot d_0
\end{equation}
and, for $m\in \N\cup\{0\}$ and $i\ge 2$, let
\begin{equation}
 b_{i,m}=\frac{1}{\np'_{i-1}}\np_1\cdot \np'_1\cdot\ldots \cdot \np_i\cdot \np'_i\cdot q_i\cdot r_i^m\cdot d_m.
\end{equation}
We set
\begin{equation}
\cB=\{b_1\}\cup \{b_{i,m}:i\ge 2, m\ge 0\}.
\end{equation}
Then $\cB$ is primitive.
It is easy to show that $\cB$ contains no scaled copy of an infinite coprime set, so the $\cB$-free shift is Toeplitz by Proposition~\ref{prop:thmB}.

Let $(S_n)$ be any saturated filtration of $\cB$ by finite sets. With no loss of generality we can assume that $b_1, b_{2,0}\in S_1$. Let $k$ be the minimal number such that $b_{k+1,0}\notin S_1$. It follows that $b_{2,0},\ldots,b_{k,0}\in S_1$. Let $n$ be maximal such that $b_{k+1,0}\notin S_n$. Then
 \begin{equation}
 b_1, b_{2,0},\ldots,b_{k,0}\in S_n\;\text{and}\;b_{k+1,0}\in S_{n+1}\setminus S_n
 \end{equation}
and
 \begin{equation}\label{eq:nGH11}
 \np_1\cdot \np'_1\cdot\ldots \cdot \np_k\cdot \np'_k\cdot q_1\cdot\ldots\cdot q_k\cdot d_0\mid\ell_{S_n}.
\end{equation}
  Observe that $b_{k+1,m}\notin S_n$ for every $m\in\N$. Otherwise, as $d_0|\ell_{S_n}$ and $S_n$ is saturated, $b_{k+1,0}\in S_n$ contrary to our assumption. Thus $r_{k+1}\nmid\ell_{S_n}$.
 For $m$ large enough, say for $m\ge m_0$, the number $\gcd(b_{k+1,m},\ell_{S_{n+1}})$ does not depend on $m$.

A case by case analysis of prime divisors of $b_{k+1,0}$ and $b_{k+1,m}$ shows that
 \begin{equation}\label{eq:GH12}
d_0\gcd(b_{k+1,m},\ell_{S_n})=\gcd(b_{k+1,0},\ell_{S_n})\;\text{for}\;m\ge m_0.
 \end{equation}
 Indeed, $\np_i|b_{k+1,0}$ (resp. $\np_i'|b_{k+1,0}$) if and only if $\np_i|b_{k+1,m}$ (resp. $\np_i'|b_{k+1,m}$) for every $i\in\N$. Moreover, $r_{k+1}\nmid \ell_{S_n}$, $d_0\nmid b_{k+1,m}$ and $q_{k+1}$ divides both  $b_{k+1,0}$ and $b_{k+1,m}$.   

We prove that
\begin{equation}\label{eq:an+1}
\forall{a\in\cA_{S_{n+1}}^{\infty}:\ \gcd(a,\ell_{S_n})|\gcd(b_{k+1,0},\ell_{S_n})\ \Leftrightarrow\ 
a=\gcd(b_{k+1,m},\ell_{S_{n+1}}) \;\text{for}\; m\ge m_0.}
\end{equation}
 In view of (\ref{eq:GH12}) it is enough to prove "$\Rightarrow$". We can assume that $a=\gcd(b_{i,m},\ell_{S_{n+1}})$ for some $i>1$ and $m\in\N$
and $\gcd(b_{i,m},\ell_{S_n})=\gcd(a,\ell_{S_n})\mid\gcd(b_{k+1,0},\ell_{S_n})$. As $\np'_k\nmid b_{k+1,0}$ and $\np'_k|\ell_{S_n}$, we have $i\le k-1$ or $i=k+1$. But $i\le k-1$ implies that $q_i\mid \gcd(b_{i,m},\ell_{S_n})\mid b_{k+1,0}$, a contradiction. Thus $i=k+1$ and since $a\in\cA_{S_{n+1}}^{\infty}$, $a=\gcd(b_{k+1,m},\ell_{S_{n+1}})$ for $m\ge m_0$.

Note that, as $b_{k+1,0}\in S_{n+1}$,
\begin{equation}\label{eq:nGH5}
\forall {m\in\N}:\;\np_{k+1}\cdot \np_{k+1}'\cdot q_{k+1}\mid \gcd(b_{k+1,m},\ell_{S_{n+1}}).
\end{equation}
Let $a\in\cA_{S_{n+1}}^{\infty}$ be such that $(\gcd(b_{k+1,0},\ell_{S_n})+\ell_{S_n}\Z)\cap a\Z\neq \emptyset$. Then $\gcd(a,\ell_{S_n})|\gcd(b_{k+1,0},\ell_{S_n})$ and by (\ref{eq:an+1}) $a=\gcd(b_{k+1,m},\ell_{S_{n+1}})$ for $m\ge m_0$.
By \eqref{eq:nGH11} and (\ref{eq:nGH5})
\begin{equation}
b_{k+1,0}\mid a\vee\ell_{S_n}.
\end{equation}
It follows from Lemma \ref{arith_char_incl}  that $(\gcd(b_{k+1,0},\ell_{S_n})+\ell_{S_n}\Z)\cap a\Z\subseteq b_{k+1,0}\Z$. Hence
\begin{equation*}
(\gcd(b_{k+1,0},\ell_{S_n})+\ell_{S_n}\Z)\cap(\cM_{\cA_{S_{n+1}}^{\infty}}\setminus \cM_{S_{n+1}})=\emptyset,
\end{equation*}
which, because of Remark \ref{rem:cHn_inclusions}, implies $\gcd(b_{k+1,0},\ell_{S_n})\notin \wcH_n$. Moreover, by (\ref{eq:GH12}) and the primitivity of $\cB$, $\gcd(b_{k+1,0},\ell_{S_n})\in\cM_{\cA_{S_n}^{\infty}}\setminus\cM_{S_n}$.
Thus $\gcd(b_{k+1,0},\ell_{S_n})\in\cM_{\cA_{S_n}^{\infty}}\setminus(\cM_{S_n}\cup\wcH_n)$.

We claim that $\eta$ is a regular Toeplitz sequence. Indeed, let $S_n=\{b_1\}\cup\{b_{i,m}: 2\leq i\leq n, 0\leq m\leq n\}$. Then $\ell_{S_n}=\np_1\np_1'\ldots \np_n\np_n'q_1\ldots q
_nr_1^n\ldots r_n^nd_0\ldots d_n$ and for $b_{i,m}\not\in S_n$
\begin{equation*}
\gcd(b_{i,m},\ell_{S_n})=\begin{cases}
\frac{1}{\np_{i-1}'}\np_1\np_1'\ldots \np_i\np_i'q_ir_i^n &\text{if }i\leq n,m>n,\\
\np_1\np_1'\ldots \np_{n-1}\np_{n-1}'\np_nd_m &\text{if } i=n+1,m\leq n,\\
\np_1\np_1'\ldots \np_{n-1}\np_{n-1}'\np_n &\text{if } i=n+1,m>n,\\
\np_1\np_1'\ldots \np_n\np_n'd_m &\text{if }i>n+1,m\leq n,\\
\np_1\np_1'\ldots \np_n\np_n'&\text{if }i>n+1,m>n.
\end{cases}
\end{equation*}
Hence $\cA_{S_n}^{\infty,p}=\{\frac{1}{\np_{i-1}'}\np_1\np_1'\ldots \np_i\np_i'q_ir_i^n: 2\leq i\leq n\}\cup\{\np_1\np_1'\ldots \np_{n-1}\np_{n-1}'\np_n\}$. Since $d(\cM_{\cA_{S_n}^{\infty,p}})\leq 1-\prod_{a\in\cA_{S_n}^{\infty,p}}\left(1-\frac{1}{a}\right)\leq1-(1-\frac{1}{\min_{n\geq i\geq2}{r_i}^n})^{n-1}(1-\frac{1}{\np_1\np_1'\ldots \np_{n-1}\np_{n-1}'\np_n})\to 0$ as $n\to\infty$, by Corollary \ref{coro:Heilbronn-Rohrbach} $m_G(\partial W)=0$. So the $\cB$-free Toeplitz shift is regular, see e.g.~\cite[Theorem~13.1]{Downarowicz2005}. Similarly one can show that condition \eqref{eq:theo2-ass-2-new-a} of Proposition \ref{prop:sufficient-for-(D')} and Theorem \ref{theo:pre-B-free}
is satisfied for $A_n=\cA_{S_n}^{\infty,p}$. We do not attempt to determine the sets $\wcH_n$ (and their periods) according to the prescription in Theorem~\ref{theo:ess-holes-arithmetic} explicitly.

Note that $a_{n+1}=\np_1 \np_1'\cdots \np_n \np_n' \np_{n+1}$ is an example of a number in $\cA_{S_{n+1}}^{\infty,p}$
for which $\gcd(a_{n+1},\ell_{S_n})$ belongs to $\cA_{S_n}^\infty$ but not to $\cA_{S_n}^{\infty,p}$,
compare Lemma \ref{lemma:A_S-infty} c) and e).

We complete this example by showing (for suitable choices of $d_i$)
that conditions \eqref{Seh} and hence also \eqref{D} \footnote{The equivalence of \eqref{Seh} and \eqref{D} was claimed without proof in Subsection~\ref{subsec:separated-holes-contition}, see also Lemma~\ref{lemma:weak-separated-essential-holes-contition}.} and \eqref{eq:trivial-intersection-new}, see Proposition~\ref{prop:invariance-new}, are violated:
Let $a_n=s_1s_1's_2s_2'\ldots s_{n-1}s_{n-1}'s_n$ and $a_n'=s_1s_2s_2'q_2r_2^n$. Suppose that $\prod_{n\geq0}\left(1-\frac{1}{d_n}\right)>\frac{1}{2}$. Then the equation $\frac{a_n}{s_1s_2s_2'}T-\frac{a_n'}{s_1s_2s_2'}M=1$ has solutions $T,M$ with
\begin{equation*}
a_nT,a_n'M
\in
\cM_{\cA_{S_n}^\infty}\setminus\cM_{S_n}
\end{equation*}
so that, for all $n$, there are holes with distance $s_1s_2s_2'$ (in the sense of \cite{Bulatek1990}) in $\cH_n$, see
Proposition~\ref{des_holes}.
Indeed, there is a unique solution $X_0\in\{1,\dots,\frac{\lcm(a_n,a_n')}{s_1s_2s_2'}-1\}$ of the equations $X\equiv 1\mod \frac{a_n}{s_1s_2s_2'}$ and $X\equiv 0\mod \frac{a_n'}{s_1s_2s_2'}$. For $k=0,\dots,d
_0d_1\cdots d_n-1$ let $X_k=X_0+k\frac{\lcm(a_n,a_n')}{s_1s_2s_2'}$, $T_k=\frac{X_ks_1s_2s_2'}{a_n}$ and $M_k=\frac{(X_k-1)s_1s_2s_2'}{a_n'}$.
As all $X_k$ are further solutions of the same two equations, and as the $d_i$ are pairwise different primes, exactly $\gamma_n:=d_0d_1\cdots d_n\prod_{i=0}^n\left(1-\frac{1}{d_i}\right)$ of these solutions are $\{d_0,d_1,\dots,d_n\}$-free, and also exactly $\gamma_n$ of the $d_0d_1\cdots d_n$ numbers $X_k-1$ are $\{d_0,d_1,\dots,d_n\}$-free. Hence exactly $\gamma_n$ of the numbers $T_k$ and $\gamma_n$ of the numbers $M_k$ are $\{d_0,d_1,\dots,d_n\}$-free. Since $2\gamma_n>d_0d_1\cdots d_n$, there exists at least one $k\in\{0,\ldots,d_0d_1\cdots d_n-1\}$ such that $T_k$ and $M_k$ are $\{d_0,d_1,\dots,d_n\}$-free, so then $a_nT_k$ and $a_n'M_k$ are $S_n$-free. This proves the claim.
\end{example}
\subsection{Superpolynomial complexity}\label{sec:complexity}
We consider the example $\cB=\cB_2=\{2^ic_i, 3^ic_i:i\in\N\}$, where $c_i$ are odd pairwise coprime numbers not divisible by 3.
Recall from Example~\ref{ex:2^i3^i} that our Theorem~\ref{theo:pre-B-free} applies to this $\cB$ and ensures that the $\cB$-free subshift has a trivial centralizer. Here we show that it has superpolynomial complexity.

We denote by $\rho$ the complexity function of $X_{\eta}$ for $\eta=\1_{\cF_{\cB}}$, that is
\begin{equation*}
\rho(n)=|\{\eta[k+1,k+n]:k\in \Z\}|
\end{equation*}
for $n\in\N$.

\begin{proposition}\label{proposition_complexity}
Assume that $2c_1< 2^2c_2< 2^3c_3<\ldots$ are such that
\begin{equation}\label{assumption1}
\sum_{i=1}^{\infty}\frac{1}{c_i}<\frac12,
\end{equation}
and there exists a real number $\alpha>1$ such that
\begin{equation}\label{assumption2}
c_j\le \alpha^j
\end{equation}
for $j\gg 0$.
Then, for each $\varepsilon\in(0,1)$,
\begin{equation*}
\liminf\limits_{n\rightarrow+\infty}\frac{\rho(n)}{n^{\varepsilon\lg_{2\alpha}\lg_{2\alpha}n}}=+\infty.
\end{equation*}
\end{proposition}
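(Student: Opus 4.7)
My plan is to lower-bound $\rho(n)$ by exhibiting an injection from residue classes modulo $\prod_{i\le m(n)}c_i$ into the set of length-$n$ subwords of $\eta$, where $m(n)\sim\log_{2\alpha}n$. Combined with an arithmetic lower bound on this product, this will yield the claim. Given $n$ large, let $m=m(n)$ be the largest integer with $2^{m}c_{m}\le n/K$ for a constant $K$ to be fixed below; assumption~\eqref{assumption2} gives $m(n)\ge\log_{2\alpha}n-O(1)$.

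The core step is the injection: I claim that if $s\not\equiv s'\pmod{\prod_{i\le m}c_i}$, then $\sigma^{s}\eta|_{[1,n]}\neq\sigma^{s'}\eta|_{[1,n]}$. By pairwise coprimality of $(c_i)$ and the Chinese remainder theorem, choose $j\le m$ with $c_j\nmid s-s'$. I seek $k\in[1,n]$ with $2^{j}c_j\mid s+k$ (forcing $\eta_{s+k}=0$) and $s'+k\in\cF_{\cB}$ (forcing $\eta_{s'+k}=1$). The first condition confines $k$ to a progression of step $2^{j}c_{j}\le n/K$ with at least $K$ representatives in $[1,n]$. On the translated progression $s'-s+2^{j}c_{j}\Z$ the contributions of $b=2^{j}c_j$ and $b=3^{j}c_j$ to $\cM_{\cB}$ vanish (since $c_j\nmid s-s'$), and a union bound on the remaining $b\in\cB$ using~\eqref{assumption1} gives
\[
\text{density of }\cM_{\cB}\cap(s'-s+2^{j}c_{j}\Z)\;\le\;\tfrac{3}{2}\sum_i\tfrac{1}{c_i}\;+\;\tfrac{1}{3}\sum_i\tfrac{1}{c_i}\;<\;\tfrac{11}{12},
\]
uniformly in $j\le m$. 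For $K$ sufficiently large, a gap/syndeticity estimate for $\cF_{\cB}$ on arithmetic progressions produces the desired $k$, yielding $\rho(n)\ge\prod_{i=1}^{m}c_i$.

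It remains to lower-bound the product. Since the $c_i$ are pairwise coprime odd integers coprime to $3$, each $c_i\ge 5$, and the smallest prime divisors $q_i$ of the $c_i$ are $m$ distinct primes $\ge 5$. By the prime number theorem, $\sum_{i=1}^{m}\log q_i\ge(1+o(1))\,m\log m$, hence $\log\prod_{i=1}^{m}c_i\ge(1+o(1))\,m\log m$. Substituting $m\ge\log_{2\alpha}n-O(1)$ and $\log m\sim\log\log n$ gives
\[
\log\rho(n)\;\ge\;(1+o(1))\,\frac{\log n\cdot\log\log n}{\log(2\alpha)},\qquad
\log n^{\varepsilon\log_{2\alpha}\log_{2\alpha}n}\;=\;\frac{\varepsilon(1+o(1))\,\log n\log\log n}{\log(2\alpha)}.
\]
The difference is $(1-\varepsilon+o(1))\log n\log\log n/\log(2\alpha)\to+\infty$ for every $\varepsilon\in(0,1)$, which is the claim.

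The main obstacle is the final part of the injection: converting the uniform positive density of $\cF_{\cB}$ on $s'-s+2^{j}c_{j}\Z$ into \emph{actual existence} of a $\cB$-free element among the $K$ progression points in $[1,n]$, with $K$ independent of $j$. This requires a quantitative syndeticity / bounded-gap estimate for $\cF_{\cB}$ in arithmetic progressions, essentially refining the strict inequality $\sum 1/c_i<\tfrac{1}{2}$ of~\eqref{assumption1} into a uniform gap bound; one natural approach is to truncate $\cB$ at some threshold $b\le B(K)$ and separately control the tail $\sum_{b>B}1/b$.
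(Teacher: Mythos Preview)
Your overall architecture matches the paper's: produce an injection from $\prod_{i\le m}\Z/c_i\Z$ into length-$n$ blocks of $\eta$, bound $m\sim\log_{2\alpha}n$, then estimate $\prod_{i\le m}c_i$ from below. The product estimate via distinct prime divisors and the PNT is a fine substitute for the paper's AM--GM observation $c_1\cdots c_j\ge j^j$; both give $\log\prod c_i\ge(1+o(1))m\log m$.

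The gap you flag in the injection step is, however, genuine and does not close by the truncation-plus-tail idea you sketch. You need a point $k$ in a progression of step $2^jc_j$ inside $[1,n]$ with $s'+k\in\cF_\cB$. With only $\asymp K$ progression points available when $j=m(n)$, a density bound on $\cM_\cB$ along the progression (however sharp) does not force one of those finitely many points to be $\cB$-free: density $<1$ gives no gap bound, and the tail $\cM_{\cB_{>B}}$, though sparse, could hit every one of your $K$ candidates for some choice of $s,s',j$. Uniform syndeticity of $\cF_\cB$ along arbitrary arithmetic progressions with a bound independent of $j$ is not available here.

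The paper sidesteps this entirely. Instead of comparing arbitrary shifts $s,s'$, it restricts to shifts $x_{\mathbf r}$ chosen via CRT to satisfy $x_{\mathbf r}\equiv 2^jr_j\pmod{2^jc_j}$ for $j\le m_n:=\lfloor\lg_2 n\rfloor$ \emph{and} $x_{\mathbf r}\equiv 0\pmod{2^{m_n+1}3^{m_n+1}}$. The last congruence guarantees that the window $[x_{\mathbf r}+1,x_{\mathbf r}+n]$ never meets $2^ic_i\Z\cup 3^ic_i\Z$ for any $i>m_n$, since $n<2^{m_n+1}$. Thus only the finitely many $b\in\cB$ with $i\le m_n$ are relevant, and the density argument becomes an honest cardinality inequality on $[1,n]$ (this is the paper's Lemma~\ref{lemma2_complexity}). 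The price is that the error terms from $|[1,n]\cap(\text{progression})|=\text{density}\cdot n\pm 1$ accumulate over $2m_n\sim 2\lg_2 n$ summands, so the threshold for $j_n$ must be $2^{j_n}c_{j_n}<\delta n/(2\lg_2 n)$ rather than your $n/K$; this still gives $j_n\ge\log_{2\alpha}n-O(\log\log n)$, which is all you need. Adopting this divisibility trick removes the syndeticity obstacle outright.
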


\begin{remark}
If $c_i$ is the square of the $(i+2)$th odd prime number, for $i\in\N$, (i.e. $c_1=25$ etc.) the assumptions of the Proposition are satisfied (\eqref{assumption2} holds for $\alpha\geq25$).
\end{remark}

\begin{remark}\label{rem:averages}
For any $j\in\N$, under the assumption (\ref{assumption1})
\begin{equation*}
c_{1}\ldots c_j \ge (2j)^{j}>j^j.
\end{equation*}
This inequality is a consequence of the fact that the arithmetic mean of positive numbers is greater or equal than their geometric mean.
\end{remark}

The following lemma is elementary.
\begin{lemma}\label{lemma1_complexity}
For every $n,b\in\N$ and $k,r\in\Z$:
\begin{equation*}
\frac{n}{b}-1<|(b\Z+r)\cap [k+1,k+n]|<\frac{n}{b}+1.
\end{equation*}
\end{lemma}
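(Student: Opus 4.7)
The proof is a short, elementary counting argument; there is no real obstacle.

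The plan is to perform Euclidean division: write $n = qb + s$ with integers $q \geq 0$ and $0 \leq s < b$, and partition the interval $[k+1, k+n]$ into $q$ consecutive blocks of length $b$, followed by a ``remainder'' block of length $s$. Each full block of $b$ consecutive integers contains exactly one element of the residue class $b\Z + r$ (regardless of $k$ and $r$), contributing exactly $q$ elements. The remainder block of length $s$ contains either $0$ or $1$ element of $b\Z+r$. Denoting the cardinality in question by $N$, this yields the two-sided bound
\begin{equation*}
q \leq N \leq q + 1.
\end{equation*}

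From here both inequalities of the lemma follow at once. For the upper bound: if $s = 0$ then $N = q = n/b < n/b + 1$; if $s \geq 1$ then $N \leq q + 1 = (n-s)/b + 1 < n/b + 1$. For the lower bound: $N \geq q = (n-s)/b$, and since $s < b$ we have $(n-s)/b > (n-b)/b = n/b - 1$, giving $N > n/b - 1$. This finishes the argument.
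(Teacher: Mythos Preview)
Your proof is correct. The paper does not actually supply a proof of this lemma, stating only that it is elementary; your Euclidean-division argument is a standard and complete justification.
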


Let
\begin{equation*}
\delta=\frac12-\sum_{i=1}^{\infty}\frac{1}{c_i}.
\end{equation*}
Given $n\in\N$ let $m_n=[\lg_2n]$. Note that $\delta>0$ by (\ref{assumption1}) and $2^{m_n+1}>n$.

\begin{lemma}\label{lemma2_complexity}
Assume that $j_0\in\{1,\ldots,m_n\}$ satisfies
\begin{equation*}
 2^{j_0}c_{j_0}<\frac{\delta n}{2\lg_2n}.
\end{equation*}
If
\begin{equation*}
 [1,n]\cap (2^{j_0}c_{j_0}\Z+r)\subseteq \bigcup_{i=1}^{m_n}(2^ic_i\Z+s_i)\cup \bigcup_{i=1}^{m_n}(3^ic_i\Z+t_i)
\end{equation*}
for some $r,s_1,\ldots,s_{m_n},t_1,\ldots,t_{m_n} \in\Z$, then $r\equiv s_{j_0} \mod 2^{j_0}c_{j_0}$.
 \end{lemma}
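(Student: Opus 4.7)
The plan is to argue by a density/counting argument. Set $d=2^{j_0}c_{j_0}$ and let $A=(d\Z+r)\cap[1,n]$; by Lemma~\ref{lemma1_complexity} we have $|A|>\tfrac{n}{d}-1$. Suppose, for contradiction, that $r\not\equiv s_{j_0}\pmod d$. Then $A$ is disjoint from $d\Z+s_{j_0}$, so the hypothesis forces
\[
A\subseteq\bigcup_{i\ne j_0}(2^ic_i\Z+s_i)\cup\bigcup_{i=1}^{m_n}(3^ic_i\Z+t_i).
\]
I would bound the number of elements of $A$ in each of the covering progressions using Lemma~\ref{lemma1_complexity}: the intersection of two arithmetic progressions is either empty or an arithmetic progression whose common difference is the lcm of the two differences, so each cover-progression contributes at most $\frac{n}{L}+1$ points, with $L$ the relevant lcm.

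To compute the lcms, the key point is that the $c_i$ are pairwise coprime and each is coprime to $2$ and $3$. Hence, for $i\ne j_0$,
\[
\operatorname{lcm}(2^{j_0}c_{j_0},2^ic_i)=2^{\max(i,j_0)}c_ic_{j_0},\qquad
\operatorname{lcm}(2^{j_0}c_{j_0},3^ic_i)=2^{j_0}3^ic_ic_{j_0},
\]
while $\operatorname{lcm}(2^{j_0}c_{j_0},3^{j_0}c_{j_0})=2^{j_0}3^{j_0}c_{j_0}$. Summing the bounds $\frac{n}{L}+1$ over the $2m_n-1$ cover-progressions therefore gives an estimate of the form
\[
|A|\leqslant \frac{n}{2^{j_0}c_{j_0}}\,D+(2m_n-1),
\]
where $D$ collects the reciprocals $\frac{1}{c_i},\frac{1}{2^{i-j_0}c_i},\frac{1}{3^ic_i},\frac{1}{3^{j_0}}$. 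Using $\sum_{i\ge1}\tfrac{1}{c_i}=\tfrac{1}{2}-\delta$ by the definition of $\delta$, the three pieces of $D$ admit the crude bounds $\sum_{i\ne j_0}\tfrac1{c_i}\le\tfrac12-\delta$, $\sum_{i\ne j_0}\tfrac{1}{3^ic_i}\le\tfrac13\sum_i\tfrac{1}{c_i}\le\tfrac16$, and $\tfrac{1}{3^{j_0}}\le\tfrac13$, which add up to $1-\delta$. So $D\le 1-\delta$.

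Finally I would use the growth hypothesis $2^{j_0}c_{j_0}<\tfrac{\delta n}{2\lg_2 n}$, i.e.\ $\tfrac{n}{2^{j_0}c_{j_0}}>\tfrac{2\lg_2 n}{\delta}$, to swallow the additive error $2m_n\le 2\lg_2 n$:
\[
\frac{n}{2^{j_0}c_{j_0}}(1-D)\;>\;\frac{2\lg_2 n}{\delta}\cdot\delta\;=\;2\lg_2 n\;\ge\;2m_n.
\]
Combining, the upper bound on the right-hand side of the covering inequality is strictly less than $\tfrac{n}{d}-1<|A|$, a contradiction. The main technical obstacle is purely bookkeeping: making sure the case analysis $i<j_0$, $i=j_0$, $i>j_0$ (for both the $2^i$ and the $3^i$ families) produces a bound $D\le 1-\delta$ that is strictly separated from $1$, which is what allows the logarithmic growth hypothesis on $2^{j_0}c_{j_0}$ to close the argument.
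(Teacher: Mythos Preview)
Your proof is correct and follows essentially the same approach as the paper: assume $r\not\equiv s_{j_0}\pmod{2^{j_0}c_{j_0}}$, drop the $i=j_0$ term from the $2^i$-family, bound $|A|$ from above via Lemma~\ref{lemma1_complexity} applied to each intersection, and compare with the lower bound $|A|>\tfrac{n}{d}-1$ to force $2^{j_0}c_{j_0}\ge \tfrac{\delta n}{2\lg_2 n}$. The only cosmetic difference is that the paper uses the cruder estimate $\lcm(3^ic_i,2^{j_0}c_{j_0})\ge 3^i2^{j_0}c_{j_0}$ uniformly in $i$ (yielding $\sum_i 3^{-i}<\tfrac12$), whereas you split off $i=j_0$ and use the exact lcm $2^{j_0}3^ic_ic_{j_0}$ for $i\ne j_0$; both routes give $D\le 1-\delta$ and the same contradiction.
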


 \begin{proof}
 Suppose otherwise, then $(2^{j_0}c_{j_0}\Z+r)$ is disjoint to $(2^{j_0}c_{j_0}\Z+s_{j_0})$ and hence
 \begin{equation}\label{formula1_complexity}
 [1,n]\cap (2^{j_0}c_{j_0}\Z+r)
 \subseteq \bigcup_{i\in\{1,\ldots,m_n\}\setminus\{j_0\}}(2^ic_i\Z+s_i)\cup \bigcup_{i=1}^{m_n}(3^ic_i\Z+t_i).
 \end{equation}
 If follows that
 \begin{equation}\label{formula2_complexity}
 \begin{split}
& |[1,n]\cap (2^{j_0}c_{j_0}\Z+r)|\\
 \le&
 \sum\limits_{i\in\{1,\ldots,m_n\}\setminus\{j_0\}}|[1,n]\cap(2^ic_i\Z+s_i)\cap (2^{j_0}c_{j_0}\Z+r)|+ \sum\limits_{i=1}^{m_n}|[1,n]\cap(3^ic_i\Z+t_i)\cap (2^{j_0}c_{j_0}\Z+r)|.
 \end{split}
 \end{equation}
 For $i\neq j_0$, $(2^ic_i\Z+s_i)\cap (2^{j_0}c_{j_0}\Z+r)$ is either empty or equal to $\lcm(2^ic_i,2^{j_0}c_{j_0})\Z+r'$ for some $r'\in\Z$. Similarly, for $i=1,\ldots,m_n$, $(3^ic_i\Z+r_i)\cap (2^{j_0}c_{j_0}\Z+r)$ is either empty or equal to $\lcm(3^ic_i,2^{j_0}c_{j_0})\Z+r''$ for some $r''\in\Z$. Then \eqref{formula2_complexity} and Lemma \ref{lemma1_complexity} yield
 \begin{equation}\label{formula3_complexity}
 \frac{n}{2^{j_0}c_{j_0}}-1\le \sum_{i\in\{1,\ldots,m_n\}\setminus\{j_0\}}\left(\frac{n}{\lcm(2^ic_i,2^{j_0}c_{j_0})}+1\right)+\sum_{i=1}^{m_n}\left(\frac{n}{3^i2^{j_0}\lcm(c_i,c_{j_0})}+1\right).
 \end{equation}
 For $i\neq j_0$, $\lcm(2^ic_i,2^{j_0}c_{j_0})=2^{\max\{i,j_0\}}c_ic_{j_0}\ge 2^{j_0}c_ic_{j_0}$ and $\lcm(c_i,c_{j_0})\ge c_{j_0}$ for every $i$, thus
 \begin{equation}\label{formula4_complexity}
 \frac{n}{2^{j_0}c_{j_0}}-1\le \sum_{i\in\{1,\ldots,m_n\}\setminus\{j_0\}}\left(\frac{n}{2^{j_0}c_ic_{j_0}}+1\right)+\sum_{i=1}^{m_n}\left(\frac{n}{3^i2^{j_0}c_{j_0}}+1\right),
 \end{equation}
 which, as $\sum_{i=1}^{m_n}\frac{1}{3^i}< \frac12$, implies
 \begin{equation}
 \frac{n}{2^{j_0}c_{j_0}}-1\le \frac{n}{2^{j_0}c_{j_0}}\left(\frac12+\sum_{i=1}^{m_n}\frac{1}{c_i}\right)+2m_n-1,
 \end{equation}
 hence
 \begin{equation}\label{formula5_complexity}
 \frac{\delta n}{2^{j_0}c_{j_0}}\le 2m_n.
 \end{equation}
 It follows that
 \begin{equation}\label{formula6_complexity}
2^{j_0}c_{j_0}\ge\frac{\delta n}{2m_n}\ge \frac{\delta n}{2\lg_2n}.
 \end{equation}
 contrary to the assumption.
 \end{proof}

Let $n\in\N$ be big enough to satisfy $2c_1<\frac{\delta n}{2\lg_2n}$, and let $j_n$ be the greatest natural number such that
 \begin{equation}\label{formula7_complexity}
 2^{j_n}c_{j_n}<\frac{\delta n}{2\lg_2n}.
 \end{equation}
 It follows by (\ref{formula7_complexity}) that $j_n\le m_n$. Moreover,
 $(j_n)$ is a non-decreasing (starting from $n$ big enough) sequence  such that $\lim_{n\rightarrow+\infty}j_n=+\infty.$

Let $N$ be a natural number such that $c_j\le \alpha^j$ for every $j\ge j_N$ and $2c_1<\frac{\delta N}{2\lg_2N}$.
If $n\ge N$, then
 \begin{equation}\label{ineq_j_n}
 j_n\ge \lg_{2\alpha}\left(\frac{\delta n}{2\lg_2n}\right)-1.
 \end{equation}
 Indeed, otherwise
 \begin{equation}
 2^{j_n+1}c_{j_n+1}\le (2\alpha)^{j_n+1}<\frac{\delta n}{2\lg_2n},
 \end{equation}
 a contradiction with the choice of $j_n$.

 \begin{lemma}\label{lemma3_complexity}
 For any sequence ${\bf r}=(r_1,\ldots,r_{m_n})$ there exists $x_{\bf r}\in\Z$ such that
\begin{equation*}
 \left\{\begin{array}{lll}
 x_{\bf r}\equiv 2^jr_j&\mod & 2^jc_j\;\;\text{for}\;j=1,\ldots,m_n,\\
 x_{\bf r}\equiv 0&\mod& 2^{m_n+1}3^{m_n+1}.
 \end{array}\right.
\end{equation*}
 Moreover, if ${\bf r'}=(r'_1,\ldots,r'_{m_n})$ is another sequence of integers and
 \begin{equation}\label{formula8_complexity}
 \eta[x_{\bf r}+1,x_{\bf r}+n]=\eta[x_{\bf r'}+1,x_{\bf r'}+n],
 \end{equation}
 where $x_{\bf r'}$ is defined analogously, then
\begin{equation*}
 r_j\equiv r'_j\mod c_j
\end{equation*}
 for $j\le j_n$, provided $n\ge N$.
 \end{lemma}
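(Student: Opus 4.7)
The plan is to verify the two claims separately, with the existence of $x_{\mathbf{r}}$ being a straightforward application of the Chinese Remainder Theorem, and the main claim reducing directly to Lemma~\ref{lemma2_complexity} via the key observation that, because $2^{m_n+1}3^{m_n+1}\mid x_{\mathbf{r}}$, only the ``small'' elements of $\cB$ contribute to the zero pattern in the window.

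For existence, I would invoke the Chinese Remainder Theorem with moduli $M_j=2^jc_j$ $(1\le j\le m_n)$ and $M_0=2^{m_n+1}3^{m_n+1}$ and required residues $\rho_j=2^jr_j$, $\rho_0=0$. Since the $c_i$ are pairwise coprime odd integers coprime to $3$, one has $\gcd(M_i,M_j)=2^{\min(i,j)}$ and $\gcd(M_0,M_j)=2^j$; as all residues $2^jr_j$ and $0$ are divisible by $2^{\min(i,j)}$, the compatibility conditions hold and CRT produces $x_{\mathbf{r}}$.

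For the second assertion, I first analyze the zero pattern. Fix $\mathbf{r}$ and consider $b\in\cB$. Since $2^{m_n+1}\mid x_{\mathbf{r}}$, the condition $2^i\mid x_{\mathbf{r}}+k$ with $i>m_n$ forces $2^i\mid k$, which is impossible for $k\in[1,n]$ because $2^i\ge 2^{m_n+1}>n$. Similarly $3^i\mid k$ with $i>m_n$ is ruled out by $3^{m_n+1}>n$. Consequently no $b=2^ic_i$ or $b=3^ic_i$ with $i>m_n$ divides any element of $[x_{\mathbf{r}}+1,x_{\mathbf{r}}+n]$, so
\begin{equation*}
Z_{\mathbf{r}}:=\{k\in[1,n]:\eta_{x_{\mathbf{r}}+k}=0\}=\bigcup_{i=1}^{m_n}\bigl([1,n]\cap(2^ic_i\Z-2^ir_i)\bigr)\cup\bigcup_{i=1}^{m_n}\bigl([1,n]\cap(3^ic_i\Z+t_i^{(\mathbf{r})})\bigr),
\end{equation*}
where each $t_i^{(\mathbf{r})}$ is determined by CRT from the two constraints $3^i\mid k$ (which uses $3^i\mid x_{\mathbf{r}}$) and $k\equiv -2^ir_i\bmod c_i$ (which uses $x_{\mathbf{r}}\equiv 2^ir_i\bmod c_i$). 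The same description holds for $x_{\mathbf{r}'}$ with $r'_i,t_i^{(\mathbf{r}')}$ in place of $r_i,t_i^{(\mathbf{r})}$.

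Now I would fix $j_0\le j_n$ and apply Lemma~\ref{lemma2_complexity}. The hypothesis $\eta[x_{\mathbf{r}}+1,x_{\mathbf{r}}+n]=\eta[x_{\mathbf{r}'}+1,x_{\mathbf{r}'}+n]$ gives $Z_{\mathbf{r}}=Z_{\mathbf{r}'}$. In particular the arithmetic progression $[1,n]\cap(2^{j_0}c_{j_0}\Z-2^{j_0}r_{j_0})\subseteq Z_{\mathbf{r}}=Z_{\mathbf{r}'}$ is contained in the cover above for $\mathbf{r}'$. The inequality~\eqref{formula7_complexity} defining $j_n$, together with the monotonicity of $(2^ic_i)_i$, yields $2^{j_0}c_{j_0}<\delta n/(2\lg_2 n)$, so Lemma~\ref{lemma2_complexity} (with $r=-2^{j_0}r_{j_0}$, $s_i=-2^ir'_i$, $t_i=t_i^{(\mathbf{r}')}$) gives $-2^{j_0}r_{j_0}\equiv -2^{j_0}r'_{j_0}\bmod 2^{j_0}c_{j_0}$. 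Since $\gcd(2^{j_0},c_{j_0})=1$, this is equivalent to $r_{j_0}\equiv r'_{j_0}\bmod c_{j_0}$.

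The only step requiring genuine care is verifying that large $b\in\cB$ contribute no zeros in the window; once this ``truncation'' is in place, the cover has exactly the form needed by Lemma~\ref{lemma2_complexity} and the remainder is a direct substitution. The CRT compatibility check for existence, and the CRT description of the $3^ic_i$-progressions inside $Z_{\mathbf{r}}$, are routine but should be stated explicitly to make the proof self-contained.
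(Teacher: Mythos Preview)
Your proof is correct and follows essentially the same approach as the paper: CRT for existence, the truncation argument showing that no $b\in\cB$ with index $i>m_n$ hits the window (using $2^{m_n+1}3^{m_n+1}\mid x_{\mathbf r}$ and $2^{m_n+1}>n$), and then a direct application of Lemma~\ref{lemma2_complexity}. The only cosmetic difference is that the paper works with the offsets $-x_{\mathbf r}$ and $-x_{\mathbf r'}$ throughout, while you unpack them into the explicit residues $-2^ir_i$ and $t_i^{(\mathbf r)}$; since $x_{\mathbf r}\equiv 2^ir_i\bmod 2^ic_i$, these are the same progressions and the conclusion $r_{j_0}\equiv r'_{j_0}\bmod c_{j_0}$ follows identically.
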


 \begin{proof}
 The existence of $x_{\bf r}$ follows by CRT. Assume (\ref{formula8_complexity}).
  Since $2^{m_n+1}3^{m_n+1}\mid x_{\bf r}$, $2^{m_n+1}3^{m_n+1}\mid x_{\bf r'}$ and $2^{m_n+1}>n$, the sets $[x_{\bf r}+1,x_{\bf r}+n]$ and $[x_{\bf r'}+1,x_{\bf r'}+n]$ are disjoint to $\bigcup_{i> m_n}(2^ic_i\Z\cup 3^ic_i\Z)$. Therefore (\ref{formula8_complexity}) implies
   \begin{equation*}
  [1,n]\cap\bigcup_{i=1}^{m_n}((2^ic_i\Z\cup 3^ic_i\Z)-x_{\bf r})=[1,n]\cap\bigcup_{i=1}^{m_n}((2^ic_i\Z\cup 3^ic_i\Z)-x_{\bf r'}).
  \end{equation*}
  In particular,
\begin{equation*}
  [1,n]\cap(2^jc_j\Z-x_{\bf r})\subseteq \bigcup_{i=1}^{m_n}((2^ic_i\Z\cup 3^ic_i\Z)-x_{\bf r'})
\end{equation*}  
for every $j=1,\ldots,m_n$. If $j\le j_n$, then $2^jc_j<\frac{\delta n}{2\lg_2n}$, and by Lemma \ref{lemma2_complexity} we conclude that
  $x_{\bf r}\equiv x_{\bf r'}\mod 2^jc_j$. As $x_{\bf r}\equiv 2^jr_j \mod 2^jc_j$ and $x_{\bf r'}\equiv 2^jr_j'\mod 2^jc_j$, it follows that $r_j\equiv r'_j\mod c_j$.
 \end{proof}

 \begin{proof}[Proof of Proposition \ref{proposition_complexity}]
Take $n\geq N$ big enough. By Lemma \ref{lemma3_complexity}, to every sequence ${\bf r}=(r_1,\ldots,r_{m_n})$ of integers we can associate a block\footnote{The choice is not unique. The conditions on $x_{{\bf r}}$ given in Lemma \ref{lemma3_complexity} do not determine $\eta[x_{{\bf r}}+1,x_{{\bf r}}+n]$ uniquely.} of length $n$ on $\eta$, and the remainders of $r_j$ modulo $c_j$ for $j\le j_n$ are determined uniquely by the block.
 It follows that
  \begin{equation}\label{formula10_complexity}
  \rho(n)\ge c_1\ldots c_{j_n}.
   \end{equation}
    Remark \ref{rem:averages} yields that
    \begin{equation}\label{formula11_complexity}
    c_1\ldots c_{j_n}\ge j_n^{j_n}.
    \end{equation}
We observed in (\ref{ineq_j_n}) that $j_n\ge \lg_{2\alpha}\left(\frac{\delta n}{2\lg_2n}\right)-1$. Let $0<\varepsilon<1$. The r.h.s of this inequality is greater than $\lg_{2\alpha}(n^{\varepsilon})$ for $n\gg 0$.
Thus, for $n$ big enough, we have:
\begin{equation}
j_n^{j_n}\ge (\varepsilon\lg_{2\alpha}n)^{(\varepsilon\lg_{2\alpha}n)}=\varepsilon^{(\varepsilon\lg_{2\alpha}n)}n^{\varepsilon\lg_{2\alpha}\lg_{2\alpha}n}.
\end{equation}
Putting this together with (\ref{formula10_complexity}) and (\ref{formula11_complexity}) we finish the proof of the proposition.
\end{proof}

\begin{remark}
Analogous (even simpler) arguments can be applied to the example $\cB_1^1$, with the same conclusion about the complexity.
\end{remark}

\section*{Acknowledgments}{Research of the first and second authors is supported by Narodowe Centrum
Nauki grant UMO-2019/33/B/ST1/00364. We are indebted to Daniel Sell for several very helpful remarks and corrections to a previous version of this manuscript.}

\end{document}